\newtheorem{theorem}{Theorem}[section]
\newtheorem{lemma}{Lemma}[section]
\newtheorem{proposition}{Proposition}[section]
\newtheorem{definition}{Definition}[section]
\newtheorem{remark}{Remark}[section]
\theoremstyle{definition} \theoremstyle{remark}
\numberwithin{equation}{section}
\begin{document}

\markboth{Y.Z. Yang, Y. Zhou et al}{space-time fractional nonlocal operators}

\date{}

\baselineskip 0.22in

\title{\bf Muckenhoupt-weighted $L_q(L_p)$ boundedness for time-space fractional nonlocal operators}

\author{ Yong Zhen Yang$^1$, Yong Zhou$^{1,2}$\\[1.8mm]
\footnotesize {Correspondence: yozhou@must.edu.mo}\\
\footnotesize  {$^{1}$ Faculty of Mathematics and Computational Science, Xiangtan University}\\
\footnotesize  {Hunan 411105, P.R. China}\\[1.5mm]
\footnotesize {$^2$ Macao Centre for Mathematical Sciences, Macau University of Science and Technology}\\
\footnotesize {Macau 999078, P.R. China}\\[1.5mm]
}

\maketitle

\begin{abstract}
We develop a weighted mixed-norm $L_q(L_p)$-estimates for solutions to fractional evolution equations of the form
\[
\partial_t^\alpha w(t,x) = \phi(\Delta) w(t,x) + h(t,x), \quad w(0,\cdot) = w_0, \quad t > 0, \; x \in \mathbb{R}^d,
\]
where $\partial_t^\alpha$ denotes the Caputo derivative of $\alpha \in (0,1)$ and $\phi(\Delta)$ is a nonlocal operator associated with a Bernstein function $\phi$. For all $p, q \in (1, \infty)$ and $\gamma \in \mathbb{R}$, we prove the estimate
\begin{align*}
&\left\| \partial_t^\alpha w \right\|_{L_q(0,T,\mu_2dt; H^{\phi,\gamma}_p(\mu_1))} + \left\| \phi(\Delta) w \right\|_{L_q(0,T,\mu_2dt; H^{\phi,\gamma}_p(\mu_1))} \\
&\qquad\leq C \left( \left\| h \right\|_{L_q(0,T,\mu_2dt; H^{\phi,\gamma}_p(\mu_1))} + \left\| w_0 \right\|_{N_{\alpha,p,\phi}} \right),
\end{align*}
where $\mu_1\in A_p(\mathbb{R}^d)$ and $\mu_2\in A_q(\mathbb{R})$ are Muckenhoupt weights, and $N_{\alpha,p,\phi}$ is a Banach space characterizing admissible initial data.
In particular, when $\mu_2\equiv 1$ and $\alpha q>1$, $N_{\alpha,p,\phi}$ coincides with the weighted Besov space $B^{\phi,\gamma+2-\frac{2}{\alpha q}}_{p,q}(\mu_1)$.

The analysis employs tools from harmonic analysis, including the Fefferman--Stein inequality, Hardy-Littlewood maximal estimates in weighted mixed-norm spaces, and sharp function methods for bounding solution operators. These results extend and unify previous work by K.~H.~Kim et al, providing a general analytic framework for weighted \(L_q(L_p)\)-theory of time-space nonlocal evolution equations.\\ [2mm]
{\bf MSC:} 26A33; 35R11.\\
{\bf Keywords:} time-space fractional nonlocal operator; \emph{sharp function} estimates; \emph{Fefferman-Stein} theorem; \emph{Muckenhoupt} $A_{p}$ weights
\end{abstract}

\baselineskip 0.25in

\section{Introduction}
The mathematical framework of fractional calculus has emerged as an indispensable tool for modeling complex systems across diverse scientific domains. Unlike classical calculus, fractional operators inherently capture non-local effects and memory phenomena, making them particularly suitable for describing viscoelastic materials, anomalous transport processes, and hereditary properties in physical systems. This modeling approach provides distinct advantages in several interconnected areas of applied mathematics and physics. In continuum mechanics, fractional derivatives enable accurate characterization of viscoelastic behavior and fractal media flow. Electromagnetic theory benefits from their application to wave propagation analysis and non-Ohmic conduction problems. Biological systems modeling utilizes fractional calculus for describing neuronal signaling dynamics and bioelectrical impedance, while control theory employs it in the design of robust fractional-order controllers. The methodology's effectiveness stems from its ability to represent complex dynamics with physically interpretable parameters, often yielding more parsimonious models than conventional approaches. Comprehensive mathematical treatments of these applications are available in \cite{A.A.Kilbas,Y zhou2,Podlubny}.

In this paper, we establish the following weighted $L_{q}(L_{p})$ theory with Muckenhoupt weights for time-space fractional evolution equations (TSFEs) driven by $\phi(\Delta)$-type operators:
\begin{align}\label{TSFE}
\begin{cases}
\partial_t^\alpha w(t,x) = \phi(\Delta)w(t,x) + h(t,x), & t>0,\ x\in\mathbb{R}^{d}, \\
w(0,\cdot) = w_{0}, & x\in\mathbb{R}^{d},
\end{cases}
\end{align}
where  $w_{0}$ is the initial value, $\partial_{t}^{\alpha}$ denotes the Caputo derivative of order $\alpha\in (0,1)$, $h(t,x)$ is a given function, and $\phi$ denotes a Bernstein function with $\phi(0^{+})=0$, mapping $(0,\infty)$ to $(0,\infty)$ that satisfies
\[
(-1)^{k}\phi^{(k+1)}(x) \geq 0, \quad x > 0, \quad k = 0, 1, 2, \ldots
\]
The operator $\phi(\Delta):=-\phi(-\Delta)$ represents the quantization of rotationally invariant subordinate Brownian motion with characteristic exponent $\phi(|\xi|^{2})$, defined as
\[
\phi(\Delta)w(x) = \mathcal{F}^{-1}\left(-\phi(|\xi|^{2})\mathcal{F}w(\xi)\right)(x), \quad w \in \mathcal{S}(\mathbb{R}^{d}).
\]

When $\phi(x)=x^{\frac{\beta}{2}}$ ($0<\beta<2$), the operator $\phi(-\Delta)$ reduces to the fractional Laplacian $(-\Delta)^{\frac{\beta}{2}}$. For $\phi(\Delta)=\Delta$, TSFEs \eqref{TSFE} become the fractional diffusion equation:
\begin{align}\label{TFE}
\begin{cases}
\partial_t^\alpha w(t,x) =\Delta w(t,x) + h(t,x), & t>0,\ x\in\mathbb{R}^{d}, \\
w(0,\cdot) = w_{0}, & x\in\mathbb{R}^{d}.
\end{cases}
\end{align}
Particle sticking and trapping phenomena in subdiffusive systems are mathematically represented by this equation \cite{Metzler2,Y zhou2}.

Han \cite{Han} investigated the unique solvability of \eqref{TFE} when $w_{0}\equiv 0$ in weighted $L_{q}(L_{p})$ spaces by exploiting the scaling properties of solution operators combined with harmonic analysis techniques. However, for TSFEs \eqref{TSFE}, the lack of scaling properties in the symbol $\phi(|\xi|^{2})$ renders Han's approach inapplicable. In this work, we overcome this limitation and establish the unique solvability of TSFEs \eqref{TSFE} with non zero initial value $w_{0}$ in weighted $L_{q}(L_{p})$ spaces, obtaining the following estimate:
$$
\left\|\left|\partial_t^\alpha w\right|+\left|w\right|+\left|\phi(\Delta)w\right|\right\|_{\mathcal{H}^{\phi,\gamma}_{p,q}(\mu_{1},\mu_{2},T)}\leq C\left\|h\right\|_{\mathcal{H}^{\phi,\gamma}_{p,q}(\mu_{1},\mu_{2},T)}+\big\|w_{0}\big\|_{N_{\alpha,p,\phi}},
$$
where $1<p,q<\infty$, $\mu_{1}(x)\in A_{p}(\mathbb{R}^{d})$, $\mu_{2}(t)\in A_{p}(\mathbb{R})$, and the norm $\mathcal{H}^{\phi,\gamma}_{p,q}(\mu_{1},\mu_{2},T)$ is defined by
$$
\left\|h\right\|_{\mathcal{H}^{\phi,\gamma}_{p,q}(\mu_{1},\mu_{2},T)}=\left(\int_{0}^{T}\left\|h(t)\right\|^{q}
_{H^{\phi,\gamma}_{p}(\mu_{1})}\mu_{2}(t)\,dt\right)^{\frac{1}{q}}.
$$

The $L_{q}(L_{p})$ theory for time-fractional diffusion equations has been an active research area. Below we summarize relevant works and contributions. In \cite{Clement}, using semigroup theory under the conditions
\begin{align*}
\alpha\in (0,1),\quad a_{ij}(t,x)=\delta_{ij},\quad \frac{2}{\alpha q}+\frac{d}{p}<1,
\end{align*}
the authors studied the unweighted $L_{q}(L_{p})$ theory for the following Volterra equation:
\begin{align}\label{volterra equation}
\partial^{\alpha}_{t}w(t,x)=a_{ij}(t,x)w_{x^{i}x^{j}}(t,x)+h(t,x).
\end{align}
In \cite{Kim3}, by employing Calder\'{o}n-Zygmund theory, the restrictions on $a_{ij}(t,x)$ were relaxed to allow uniform continuity in $x$ and piecewise continuity in $t$, establishing the $L_{q}(L_{p})$ theory for \eqref{volterra equation} when $\alpha\in(0,2)$ and $p,q>1$. In \cite{Dong 1}, using the level set method and the "crawling of ink spots" lemma (\cite[Appendix]{Dong 1}), the authors investigated the unweighted $L_{p}$ theory for $\alpha\in (0,1)$ with $p=q$, while relaxing the conditions on $a_{ij}(t,x)$ to only require measurability in $t$ and allowing small oscillations in $x$. However, these iterative techniques and level set methods cannot be naturally extended to mixed-norm $L_{q}(L_{p})$ spaces or incorporate weights. In \cite{Han}, the authors studied the weighted $L_{q}(L_{p})$ estimates for \eqref{volterra equation} under the condition $a_{ij}=\delta_{ij}$ by deriving sharp function estimates for the solution operator and its derivatives, combined with Fefferman-Stein and Hardy-Littlewood theorems. However, the sharp function estimates in \cite{Han} rely on the scaling properties of the fundamental solution operator, which fail for nonlocal space operators $\phi(\Delta)$. For the time-space nonlocal equation:
\begin{align}\label{time-space nonlocal equation}
\partial^{\alpha}_{t}w(t,x)=\phi(\Delta)w(t,x)+h(t,x),
\end{align}
Kim \cite{Kim1} established BMO estimates for the solution operator by exploiting its properties. Applying Calder\'{o}n-Zygmund theory, mixed-norm $L_{q}(L_{p})$ priori estimates were obtained for the derivatives of the solution operator \cite[Theorem 4.10]{Kim1}, leading to the unweighted $L_{q}(L_{p})$ theory for \eqref{time-space nonlocal equation}. Notably, in \cite{Dong 2}, using the mean oscillation method from \cite{Dong 1} but replacing finite cylinders $Q_{r}(t_{0},x_{0})$ with infinite cylinders $\left(-\infty,t_{0}\right)\times B_{r}(x_{0})$, the authors derived weighted $L_{q}(L_{p})$ theory for \eqref{volterra equation} under the same assumptions as \cite{Dong 1}. \cite{He3} also made further supplement to \cite{Dong 2}. However, when considering nonlocal operators $\phi(\Delta)$, the method in \cite{Dong 2} only applies to local time derivatives and fails for Caputo time-fractional derivatives $\partial^{\alpha}_{t}$; see \cite[Remark 4.2]{Dong}.

To our knowledge, research on weighted $L_{q}(L_{p})$ theory for time-space nonlocal operators remains limited. Building upon previous developments in the field \cite{Han,Kim1,Kim3,Choi J,Dong,Dong 2}, we establish a comprehensive $L_{q}(L_{p})$ theory with Muckenhoupt weights for time-space fractional evolution equations (TSFEs). Our work provides substantial generalizations in several key aspects:

First, We provide an alternative proof (Lemma \ref{independent estimate}) for Kim's result \cite[Theorem 4.10]{Kim1}. Unlike previous methods that rely on maximal function estimates, we demonstrate that the symbol of $\phi(\Delta)\mathscr{L}_{0}$ is a Marcinkiewicz multiplier in $L_{p}(\mathbb{R}^{d+1})$. By using the Marcinkiewicz multiplier theorem, we directly establish the strong type $L_{p}(\mathbb{R};L_{p}(\mathbb{R}^{d}))$ to $L_{p}(\mathbb{R};L_{p}(\mathbb{R}^{d}))$ boundedness. Subsequently, employing Banach-valued Calder\'{o}n-Zygmund decomposition, we prove weak-type $(1,1)$ estimates, and through Marcinkiewicz interpolation and duality arguments, we recover the main result. This approach offers a more streamlined proof compared to Kim \cite[Theorem 4.10]{Kim1}.

Second, leveraging fundamental solution properties under low-scaling conditions on Bernstein functions, we derive sharp function estimates for derivatives of the fundamental solution to TSFEs. By combining the Fefferman--Stein theorem with Hardy--Littlewood maximal function theory, we establish the weighted $L_{q}(L_{p})$ theory for TSFEs. Notably, our approach circumvents the reliance on scaling properties of solution operators \cite{Han}, making it applicable to a broader class of nonlocal operators than previous methods.

Finally, to handle the presence of Muckenhoupt weights, we develop new techniques for initial value characterization. Since weighted settings preclude direct estimates for certain frequency-localized functions \cite[Lemma 5.2]{Kim1}, we employ trace theory for initial values in weighted Besov spaces $B^{\phi,(W\circ k^{\star})^{1/q}(\gamma+2,\gamma)}_{p,q}(\mu_{1})$. This framework naturally reduces to known results in unweighted cases while providing proper characterization in weighted settings. Our work thus complements and extends existing theories, offering a unified treatment of both weighted and unweighted scenarios for time-space nonlocal evolution equations.

The subsequent sections are arranged as: Section 2 develops the notation, definitions, and key lemmas used throughout the paper, including fractional derivatives and integrals, sharp maximal functions, Hardy-Littlewood maximal functions, and the H\"{o}rmander multiplier theorem. Section 3 establishes pointwise estimates for the sharp maximal function of the nonlocal $\phi(\Delta)$ derivative of solution operators. The main results are presented in Section 4.

\section{Preliminaries}
In this section, we introduce some notations, definitions, and lemmas that will be used throughout this paper.

In this paper, we denote by $C(a,b)$ a generic positive constant depending on constants $a$ and $b$, which may vary from line to line in the subsequent text. We use $*$ and $\star$ to denote the convolution to $t$ and $x$, respectively. Let $(\mathbb{R}^d,|\cdot|)$ denote $d$-dimensional Euclidean space. For $M \subset \mathbb{R}^d$, $|M|$ represents its Lebesgue measure. We define the ball $B_\delta(x) := \{z \in \mathbb{R}^d : |x-z| < \delta\}$ with $B_\delta := B_\delta(0)$. For a multi-index $\gamma = (\gamma_1,...,\gamma_d)$, we set
\[
\frac{\partial}{\partial_{x_{i}}}w=\nabla_{x_{i}}w, \quad \text{and} \quad \nabla^{\gamma}_{x}w=\nabla^{\gamma_{1}}_{x_{1}}\nabla^{\gamma_{2}}_{x_{2}}\dots\nabla^{\gamma_{d}}_{x_{d}}w.
\]
The space $L_{p}(M,\nu,X)$ consists of all $\nu$-measurable functions on $M$ taking values in a Banach space $X$, such that
\[
\int_{M} \|w\|_{X}^{p} \, d\nu < \infty,
\]
and we write $L_{p}(M,\nu,\mathbb{R})=L_{p}(M,\nu)$ for simplicity. The integral average of a measurable function $w$ over $M$ is denoted by $[w]_{M}$, that is,
\[
(w)_{M}=\fint_{M}\|w\|_{X}\,d\nu=\frac{1}{\nu(M)}\int_{M}\|w\|_{X}\,d\nu.
\]
We denote by $\mathcal{S}$ the Schwartz space of rapidly decreasing smooth functions, and by $\mathcal{S}'$ the space of tempered distributions, which is the dual space of $\mathcal{S}$. The Fourier transform and the inverse Fourier transform are denoted by $\mathcal{F}$ and $\mathcal{F}^{-1}$, respectively. For any $g\in\mathcal{S}$,
\[
\mathcal{F}(g)(\xi)=\int_{\mathbb{R}^{d}}g(x)e^{ix\cdot\xi}\,dx, \quad \text{and} \quad \mathcal{F}^{-1}(g)(x)=\int_{\mathbb{R}^{d}}g(\xi)e^{ix\cdot\xi}\,d\xi.
\]
By duality, we can extend the Fourier transform to tempered distributions. Specifically, for $g\in\mathcal{S}'$,
\[
\langle\mathcal{F}g,w\rangle=\langle g,\mathcal{F}w\rangle, \quad \text{and} \quad \langle\mathcal{F}^{-1}g,w\rangle=\langle g,\mathcal{F}^{-1}w\rangle \quad \text{for all } w\in\mathcal{S}.
\]
For a measurable function $h$ with polynomial growth at infinity, we define the pseudo-differential operator $h(D)$ as follows: for $g\in \mathcal{S}$,
\[
h(D)g(x):=\mathcal{F}^{-1}\big[h\cdot\mathcal{F}g(\xi)\big], \quad \text{where } D=i\nabla.
\]
Next, we introduction the definition of fractional integral and Caputo derivative.
\begin{definition}
For the function $w\in L^{1}(0,\infty;\mathcal{S})$, the $0<\alpha<1$ fraction integral $J_{t}^{\alpha}$ and the Caputo fractional derivative $\partial_{t}^{\alpha}$~is pointwise defined as
\begin{align*}
J_{t}^{\alpha}w(t,x)&=g_{\alpha}*w(t,x),\\
\partial^{\alpha}_{t}w(t,x)&=D^{\alpha}_{t}\left(w(t,x)-w(0,x)\right)=\frac{d}{dt}\left(g_{1-\alpha}*(w(t,x)-w(0,x))\right),
\end{align*}
where $D^{\alpha}_{t}$ is Riemann-Liouville derivative.
Moreover, if $w(t,x)$ is absolutely continuous~w.r.t. $t$, we also have
$$
\partial^{\alpha}_{t}w(t,x)=g_{1-\alpha}*\frac{d}{dt}w(t,x),\text{ where }g_{\alpha}(t)=t^{\alpha-1}/\Gamma(\alpha).
$$
\end{definition}
The Mittag-Leffler function ~$E_{\alpha,\beta}(z)$~ plays an important role in the field of fractional differential equations and is defined as
$$
E_{\alpha,\beta}(z)= \sum_{k=0}^{\infty} \frac{z^{n}}{\Gamma(\alpha k + \beta)}, \qquad \text{for }\alpha, \beta > 0, \text{ and }z \in \mathbb{C},
$$
and the following~properities~can refer to \cite{Y zhou2,A.A.Kilbas,Podlubny}.
\begin{proposition}\label{M.T.F.P}For $0<\alpha<1$ and $\rho>0$, we have that
\begin{enumerate}[\rm(i)]
  \item
  \begin{equation}\label{Laplace of Mittag}
    \int_{0}^{\infty} e^{-zt} t^{\beta-1} E_{\alpha,\beta}(\pm \rho t^{\alpha}) \, dt = \frac{z^{\alpha-\beta}}{s^{\alpha} \mp \rho} \quad \text{for} \quad \Re(z) > 0, \, \rho \in \mathbb{C}, \, |z^{-\alpha} \rho| < 1.
    \end{equation}
  \item
  $$
  \frac{d}{dt}\big(t^{\alpha-1}E_{\alpha,\alpha}(-\rho t^{\alpha})\big)=t^{\alpha-2}E_{\alpha,\alpha-1}(-\rho t^{\alpha}),\text{ }t>0.
  $$
  \item
  $$
  \frac{d}{dt}E_{\alpha,1}(-\rho t^{\alpha})=-\rho t^{\alpha-1}E_{\alpha,\alpha}(-\rho t^{\alpha}),\text{ }t>0.
  $$
\end{enumerate}
\end{proposition}
Let $\phi: (0,\infty) \to (0,\infty)$ be a function such that $\lim_{x \to 0^{+}} \phi(x) = 0$ and it fulfills\[
\phi(x)=ax+\int_{(0,\infty)}\big(1-e^{-tx}\big)w(dt),
\]
where $a\geq 0$, and $w$ is a L\'{e}vy measure satisfying $\int_{(0,\infty)}\min\{1,t\}w(dt)<\infty$. Such functions are called Bernstein functions. For $\phi$, we easily observe that
\begin{align}\label{B.S.T,func.bdd}
|x^n \phi^{(n)}(x)| \leq a I_{n=1} + \int_{0}^{\infty} (t x)^n e^{-t x} w(dt) \lesssim \phi(x).
\end{align}

Every Bernstein function $\phi$ corresponds to a subordinator $S_t$ with Laplace transform $\mathbb{E}[e^{-xS_t}] = e^{-t\phi(x)}$. When coupled with an independent $d$-dimensional Brownian motion $W_t$, the composition $Y_t = W_{S_t}$ yields a subordinate Brownian motion in $\mathbb{R}^d$ with characteristic function $\exp(-t\phi(|\xi|^{2}))$ and possesses a transition density $p_{d}(t,x)$ given by
\begin{align*}
\mathbb{E}\exp(ixY_{t})&=\int_{\Omega}\exp(-|\xi|^{2}S_{t})\mathbb{P}(\,d\omega)=\exp(-t\phi(|\xi|^{2})),\\
p_{d}(t,x)&=\mathcal{F}^{-1}\big(\exp(-t\phi(|\xi|^{2}))\big)=\int_{(0,\infty)}\frac{1}{(4\pi s)^{\frac{d}{2}}}\exp\left(-\frac{|x|^{2}}{4s}\right)\zeta_{t}(\,ds),
\end{align*}
where $\zeta_{t}$ denotes the distribution function of $S_{t}$.

The generator of $Y_t$ is given by $\phi(\Delta)$, that is, for $g\in\mathcal{S}$,
\[
\phi(\Delta)g(x)=\lim_{t\rightarrow 0^{+}}\frac{\mathbb{E}g(x+Y_{t})-g(x)}{t}.
\]
This is equivalent to the following integro-differential representation:
\[
\phi(\Delta)g(x)=a\Delta g(x)+\int_{\mathbb{R}^{d}}\big(g(x+z)-g(x)-\nabla g(x)\cdot zI_{|z|\leq 1}\big)j(|z|)\,dz,
\]
where the jump kernel $j$ is given by
\[
j(|z|)=\int_{(0,\infty)}(4\pi t)^{-\frac{d}{2}}\exp\left(-\frac{|z|^{2}}{4t}\right)w(\,dt).
\]
Alternatively, the operator can be defined via the Fourier transform:
\[
\phi(\Delta)g(x) = \mathcal{F}^{-1}\left(-\phi(|\xi|^{2})\mathcal{F}g(\xi)\right)(x).
\]

From the concavity of Bernstein functions, it follows that for any $0<m<M$,
$
\frac{\phi(M)}{M}\leq\frac{\phi(m)}{m}.
$
Moreover,  this work adopts the subsequent lower scaling constraint for $\phi$ as introduced in Kim \cite{Kim2,Kim3}:

\textbf{Assumption}.
$\exists$ $\delta_{0}\in(0,1]$ and $c_{1}>0$ s.t.
\[
c_{1}\left(\frac{M}{m}\right)^{\delta_{0}}\leq\frac{\phi(M)}{\phi(m)},\quad \text{for all } 0<m<M<\infty.
\]
Consequently, we obtain the two-sided inequality
\begin{align}\label{lower scailing condition}
c_{1}\left(\frac{M}{m}\right)^{\delta_{0}}\leq\frac{\phi(M)}{\phi(m)}\leq\frac{M}{m},\quad \text{for all } 0<m<M<\infty.
\end{align}

Several functions satisfy this assumption, including $\phi(x)=x^{\beta}$ and $\phi(x)=x/(\log(1+x^{\frac{\beta}{2}}))$ for $0<\beta<2$, among others (see Kim \cite{Kim1,Kim2}).

\begin{remark}[\cite{Kim2}]
The lower scaling condition implies that $\phi$ cannot grow too slowly at infinity. This condition is crucial for obtaining sharp estimates of the heat kernel associated with $\phi(\Delta)$.
\end{remark}
From above assumption,~it is easy to obtain that
\begin{align}\label{Pro.Convergence}
\int_{\varrho^{-1}}^{\infty}t^{-1}\phi(t^{-2})\,dt=\int_{1}^{\infty}t^{-1}
\frac{\phi(\varrho^{2}t^{-2})}{\phi(\varrho^{2})}\phi(\varrho^{2})\,dt\leq C\int_{1}^{\infty}t^{-1-2\delta_{0}}\,dt
\phi(\varrho^{2})\leq C\phi(\varrho^{2}).
\end{align}
Next we introduce some~facts about Muckenhoupt weights, see \cite{Grafakos}.
\begin{definition}[$A_{p}$ weight]
For $1<p<\infty$ and the nonnegative measurable function $\mu(x)$, we denote that $\mu\in A_{p}(\mathbb{R}^{d})$ if
$$
[\mu]_{p}=\sup_{x_{0}\in\mathbb{R}^{d},\varrho>0}\bigg(\fint_{B_{\varrho}(x_{0})}\mu(x)\,dx\bigg)
\bigg(\fint_{B_{\varrho}(x_{0})}\mu(x)^{-\frac{1}{p-1}}\,dx\bigg)^{p-1}<\infty.
$$
If $\mu\in A_{p}(\mathbb{R}^{d})$, we also say $\mu$ is a $A_{p}$ weight.
\end{definition}
Let $T\in(0,\infty]$,~$1<p,q<\infty$,  $\mu_{1}\in A_{p}(\mathbb{R}^{d})$, $\mu_{2}\in A_{q}(\mathbb{R})$, we~denote that $L_{p}(\mu_{1})=L_{p}\big(\mathbb{R}^{d},\mu_{1}dx\big)$ and for $s\in\mathbb{R}$, denote $H^{s,\phi}_{p}(\mu_{1})$~that the set of termpered distribution satisfying that
$$
\big\|w\big\|_{H^{s,\phi}_{p}(\mu_{1})}=\big\|\big(I-\phi(\Delta)\big)^{\frac{s}{2}}w\big\|_{L_{p}(\mu_{1})}<\infty.
$$
If $\mu_{1}\equiv 1$, then $L_{p}(\mu_{1})=L_{p}(\mathbb{R}^{d})$ and $H^{s,\phi}_{p}(\mu_{1})=H^{s,\phi}_{p}(\mathbb{R}^{d})$ which have been studied in Farkas \cite{Farkas}. Thus, we can define $\mathcal{L}_{p,q}(\mu_{1},\mu_{2},T)$ and $\mathcal{H}_{p,q}^{s,\phi}(\mu_{1},\mu_{2},T)$ that, respectively
\begin{align*}
\mathcal{L}_{p,q}(\mu_{1},\mu_{2},T)=L_{q}\big((0,T),\mu_{2}dt;L_{p}(\mu_{1})\big),\text{ }\mathcal{H}_{p,q}^{s,\phi}(\mu_{1},\mu_{2},T)=L_{q}\big((0,T),\mu_{2}dt;H^{s,\phi}_{p}(\mu_{1})\big),
\end{align*}
and if $T=\infty$, we denote that $\mathcal{L}_{p,q}(\mu_{1},\mu_{2})$ and $\mathcal{H}_{p,q}^{s,\phi}(\mu_{1},\mu_{2})$, respectively.
\begin{remark}\label{Ap weight proposition}
\begin{enumerate}[\rm(i)]For the $A_{p}$ weight function $\mu(x)$, it holds the following proposition:
 \item
 $$
 A_{p}(\mathbb{R}^{d})=\bigcup_{1<q<p}A_{q}(\mathbb{R}^{d}).
 $$
 \item $\mathcal{S}(\mathbb{R}^{d})$ is dense in $L_{p}(\mu)$ and denote that $\tilde{\mu}(x)=\mu^{-\frac{1}{p-1}}$, then $\big(L_{p}(\mu)\big)^{*}=L_{p}(\tilde{\mu})$, that is for any bounded function $T$ on $L_{p}(\mu)$, there exists a unique $h\in L_{p}(\tilde{\mu})$ s.t.
     $$
     Tf=\int_{\mathbb{R}^{d}}f(x)g(x)dx,\text{ for any }f\in L_{p}(\mu).
     $$
 \item For $\psi\in\mathcal{S}$, support in the unit ball $B_{1}(0)$, and denote $\psi_{\varepsilon}(x)=\varepsilon^{-d}\psi(x/\varepsilon)$, then we have that for any $g\in L_{p}(\mu)$,
  $$
  \left\|g\star \psi_{\varepsilon}\right\|_{L_{p}(\mu)}\leq C_{0}\left\|g\right\|_{L_{p}(\mu)},
  $$
  where the $C_{0}$ depend that $d$,~$[\mu]_{p}$,~$p$,~and~$\|\mu\|_{L_{1}}+\|\mu\|_{L_{\infty}}+\|D\mu\|_{L_{\infty}}$.~Moreover,
  $$
  \left\|g\star \psi_{\varepsilon}-g\right\|_{L_{p}(\mu)}\rightarrow 0\text{ as }\varepsilon\rightarrow 0^{+},
  $$
  see that~\rm\cite{Grafakos,Han}.
\end{enumerate}
\end{remark}
Let $\hat{\psi}\in\mathcal{S}$, and support in a strip $\big\{\xi:\frac{1}{2}<|\xi|<2\big\}$, and for $f\in\mathcal{S}'$, denote that
\begin{align*}
\hat{\psi}_{j}(\xi)&=\hat{\psi}(2^{-j}\xi),\text{ }\hat{\Phi}(\xi)=1-\sum_{j=0}^{\infty}\hat{\psi}_{j}(\xi),\\
\Delta_{j}f(x)&=\mathcal{F}^{-1}\big(\hat{\psi}(2^{-j}\xi)\hat{f}(\xi)\big)(x),\text{ }
Sf(x)=\mathcal{F}^{-1}\big(\hat{\Phi}(\xi)\hat{f}(\xi)\big)(x),
\end{align*}
by the Mikulevi\v{c}ius \cite{Mikulevicius}, the space $H^{s,\phi}_{p}(\mathbb{R}^{d})$ admits the following Littlewood-Paley characterization:
$$
\big\|((I-\phi(\Delta))^{\frac{s}{2}}w\big\|_{L_{p}}\sim \left(\|Sw\|_{L_{p}}+\left\|\big\{\phi(2^{2j})^{\frac{s}{2}}|\Delta_{j}w|\big\}\right\|_{L_{p}(l_{2})}\right).
$$
\begin{lemma}\label{weighted Lp Littlewood-Paley}
Let $1<p<\infty$, $s\in\mathbb{R}$, $\mu\in A_{p}(\mathbb{R}^{d})$. For the space $H^{s,\phi}_{p}(\mu)$, the following Littlewood-Paley characterization holds:
\begin{align*}
\left\|w\right\|_{H^{s,\phi}_{p}(\mu)}
\sim\left(\|Sw\|_{L_{p}(\mu)}+
\left\|\bigg(\sum_{j=0}^{\infty}\phi(2^{2j})^{\frac{s}{2}}|\Delta_{j}w|^{2}\bigg)^{\frac{1}{2}}\right\|_{L_{p}(\mu)}\right).
\end{align*}
\end{lemma}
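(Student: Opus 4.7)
The plan is to prove the two-sided equivalence by a Fourier-multiplier argument combined with the classical weighted (scalar) Littlewood-Paley decomposition of $L_p(\mu)$. Set $f:=(I-\phi(\Delta))^{s/2}w$, and fix a smooth cutoff $\tilde{\psi}\in\mathcal{S}$ supported in $\{1/3<|\xi|<3\}$ with $\tilde{\psi}\equiv 1$ on $\operatorname{supp}\hat{\psi}$; put $\tilde{\psi}_j(\xi)=\tilde{\psi}(2^{-j}\xi)$. Because $\hat{\psi}_j\tilde{\psi}_j=\hat{\psi}_j$, one has the exact identities
\begin{align*}
\phi(2^{2j})^{s/2}\triangle_j w = m_j^{+}(D)(\triangle_j f),\qquad \triangle_j f = m_j^{-}(D)\bigl(\phi(2^{2j})^{s/2}\triangle_j w\bigr),
\end{align*}
where
\begin{align*}
m_j^{\pm}(\xi) := \phi(2^{2j})^{\pm s/2}\bigl(1+\phi(|\xi|^{2})\bigr)^{\mp s/2}\tilde{\psi}(2^{-j}\xi).
\end{align*}
The low-frequency block is handled by the analogous multiplier $(1+\phi(|\xi|^{2}))^{s/2}\tilde{\Phi}(\xi)$, with $\tilde{\Phi}\in\mathcal{S}$ a bump equal to $1$ on $\operatorname{supp}\hat{\Phi}$; its symbol is smooth and compactly supported, hence automatically a bounded $L_p(\mu)$-multiplier, yielding $\|Sf\|_{L_p(\mu)}\sim\|Sw\|_{L_p(\mu)}$.

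The central step is to verify that $\{m_j^{\pm}\}_{j\ge 0}$ satisfy the hypotheses of the Hörmander-Mikhlin multiplier theorem with constants independent of $j$. On $\operatorname{supp}\tilde{\psi}_j$ one has $|\xi|\sim 2^j$; the lower scaling bound together with monotonicity of $\phi$ yields $\phi(|\xi|^{2})\sim\phi(2^{2j})$, and $\phi(2^{2j})\ge\phi(1)>0$ keeps $1+\phi(|\xi|^{2})$ comparable to $\phi(2^{2j})$. Combining the Bernstein derivative bound $|x^n\phi^{(n)}(x)|\lesssim\phi(x)$ with the chain rule applied to $\xi\mapsto (1+\phi(|\xi|^{2}))^{\mp s/2}\tilde{\psi}(2^{-j}\xi)$, I expect to obtain
\begin{align*}
|\xi|^{|\gamma|}\bigl|\partial^{\gamma}_{\xi} m_j^{\pm}(\xi)\bigr|\le C(\gamma)
\end{align*}
uniformly in $j\ge 0$ for every multi-index $\gamma$ up to the finite order required by the weighted multiplier theorem.

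With the uniform Mikhlin bound in hand, the weighted scalar Hörmander theorem combined with a Khintchine-type randomization (i.e., its vector-valued extension) delivers
\begin{align*}
\bigg\|\Bigl(\sum_{j\ge 0}\phi(2^{2j})^{s}|\triangle_j w|^{2}\Bigr)^{1/2}\bigg\|_{L_p(\mu)} \le C\bigg\|\Bigl(\sum_{j\ge 0}|\triangle_j f|^{2}\Bigr)^{1/2}\bigg\|_{L_p(\mu)},
\end{align*}
and the reverse inequality via $m_j^{-}$. Combining with the classical weighted Littlewood-Paley identity
\begin{align*}
\|f\|_{L_p(\mu)}\sim \|Sf\|_{L_p(\mu)} + \bigg\|\Bigl(\sum_{j\ge 0}|\triangle_j f|^{2}\Bigr)^{1/2}\bigg\|_{L_p(\mu)},
\end{align*}
valid for $\mu\in A_p$ as a consequence of the Fefferman-Stein inequality, and the low-frequency equivalence above, one arrives at the stated norm equivalence because $\|f\|_{L_p(\mu)}=\|w\|_{H^{s,\phi}_p(\mu)}$ by definition.

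The main obstacle will be the uniform-in-$j$ Mikhlin verification on $m_j^{\pm}$ for arbitrary $s\in\mathbb{R}$: both the derivative bound on $\phi$ and the lower scaling bound are required so that the ratio $(1+\phi(|\xi|^{2}))^{\pm s/2}/\phi(2^{2j})^{\pm s/2}$ and all of its spatial derivatives remain controlled, uniformly on each dyadic shell. Once that is settled, the remainder reduces to the standard weighted harmonic analysis (Fefferman-Stein, Hardy-Littlewood maximal, Hörmander multiplier) already introduced in Section 2.
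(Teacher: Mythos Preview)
Your approach is essentially the same as the paper's. The paper also introduces auxiliary cutoffs $\hat{\eta}_j=\hat{\psi}_{j-1}+\hat{\psi}_{j}+\hat{\psi}_{j+1}$ in place of your $\tilde{\psi}_j$, derives the uniform Mikhlin bound
\[
\left|D^{\gamma}_{\xi}\!\left(\frac{\hat{\eta}_{j}(\xi)\hat{\psi}_{j}(\xi)(1+\phi(|\xi|^{2}))^{\sigma/2}}{\phi(2^{2j})^{\sigma/2}}\right)\right|\le C(\gamma,d)\,|\xi|^{-|\gamma|}
\]
from the Bernstein inequality $|x^{n}\phi^{(n)}(x)|\lesssim\phi(x)$ together with the scaling relation $\phi(|\xi|^{2})\sim\phi(2^{2j})$ on the $j$th shell, and then cites \cite[Theorem~3.2]{Park} for the remaining vector-valued multiplier and weighted Littlewood--Paley steps that you spell out explicitly.
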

\begin{proof}
Let $\hat{\eta}_{j}=\hat{\psi}_{j-1}+\hat{\psi}_{j}+\hat{\psi}_{j+1}$. Note that for any multi-index $\gamma$,
\begin{align*}
\left|D^{\gamma}_{\xi}\phi(|\xi|^{2})\right|=\left|\sum_{\frac{|\gamma|}{2}\leq j\leq|\gamma|}\phi^{(j)}(|\xi|^{2})\prod_{i=1}^{d}|\xi|^{\gamma_{i}}\right|
\leq C(\gamma,d)\left|\xi\right|^{-|\gamma|},\text{ where }\sum_{i=1}^{d}\gamma_{i}=2j-|\gamma|.
\end{align*}
Consequently, we obtain
\begin{align*}
\left|D^{\gamma}_{\xi}\bigg(\frac{\hat{\eta}_{j}(\xi)\hat{\psi}_{j}(\xi)
(1+\phi(|\xi|^{2}))^{\frac{\sigma}{2}}}{\phi(2^{2j})^{\frac{\sigma}{2}}}\bigg)\right|\leq C(\gamma,d)\left|\xi\right|^{-|\gamma|}.
\end{align*}
The remainder of the proof follows similarly to the argument in \cite[Theorem 3.2]{Park}.
\end{proof}
\begin{definition}
Consider parameters $0<\alpha<1$, $T<\infty$, $1<p,q<\infty$, with Muckenhoupt weights $\mu_1\in A_p(\mathbb{R}^d)$, $\mu_2\in A_q(\mathbb{R})$, and $\sigma\in\mathbb{R}$. If a smooth approximating sequence $\{w_n\}\subset C^\infty([0,\infty)\times\mathbb{R}^d)$ exists such that
\begin{align*}
\left\|w_{n}-w_{m}\right\|_{\mathcal{H}^{\phi,\sigma+2}_{p,q}(\mu_{1},\mu_{2},T)}\rightarrow 0,\text{ and }\left\|\partial^{\alpha}_{t}w_{n}-\partial^{\alpha}_{t}w_{m}\right\|_{\mathcal{H}^{\phi,\sigma}_{p,q}(\mu_{1},\mu_{2},T)}\rightarrow 0\text{ as }m,n\rightarrow\infty,
\end{align*}
we denote the $\partial^{\alpha}_{t}w$ that
$$
\partial^{\alpha}_{t}w=\lim_{n\rightarrow\infty}\partial^{\alpha}_{t}w_{n},\text{ in the sence of }\mathcal{H}^{\phi,\sigma}_{p,q}(\mu_{1},\mu_{2},T),
$$
and we say the $w\in\mathcal{H}^{\alpha,\phi,\sigma+2}_{p,q}(\mu_{1},\mu_{2},T)$ and with the norm
\begin{align}\label{solution space norm}
\left\|w\right\|_{\mathcal{H}^{\alpha,\phi,\sigma+2}_{p,q}(\mu_{1},\mu_{2},T)}
=\left\|w\right\|_{\mathcal{H}^{\phi,\sigma+2}_{p,q}(\mu_{1},\mu_{2},T)}
+\left\|\partial^{\alpha}_{t}w\right\|_{\mathcal{H}^{\phi,\sigma}_{p,q}(\mu_{1},\mu_{2},T)},
\end{align}
and we say the sequence $\{w_{n}\}$ is the defining sequence of $w$. Moreover, if $\{w_{n}\}$ also satisfy that $w_{n}(0,x)=0$ for any $n\in\mathbb{N}$, then we say $w\in \mathcal{H}^{\alpha,\phi,\sigma+2}_{p,q,0}(\mu_{1},\mu_{2},T)$ and equip the same norm with in $\mathcal{H}^{\alpha,\phi,\sigma+2}_{p,q}(\mu_{1},\mu_{2},T)$.
\end{definition}
\begin{lemma}\label{Pro.solution space}
For the space $\mathcal{H}^{\alpha,\phi,\sigma+2}_{p,q}(\mu_{1},\mu_{2},T)$, the following propositions hold:
\begin{enumerate}[\rm(i)]
  \item The spaces $\mathcal{H}^{\alpha,\phi,\sigma+2}_{p,q}(\mu_{1},\mu_{2},T)$ and $\mathcal{H}^{\alpha,\phi,\sigma+2}_{p,q,0}(\mu_{1},\mu_{2},T)$ are Banach spaces when equipped with the norm \eqref{solution space norm}.

  \item For $\gamma\in\mathbb{R}$, the operator $\big(I-\phi(\Delta)\big)^{\frac{\gamma}{2}}$:
  $$
  \mathcal{H}^{\alpha,\phi,\sigma+2}_{p,q}(\mu_{1},\mu_{2},T)\rightarrow
  \mathcal{H}^{\alpha,\phi,\sigma+2-\gamma}_{p,q}(\mu_{1},\mu_{2},T),\text{ }
  \mathcal{H}^{\alpha,\phi,\sigma+2}_{p,q,0}(\mu_{1},\mu_{2},T)\rightarrow
  \mathcal{H}^{\alpha,\phi,\sigma+2-\gamma}_{p,q,0}(\mu_{1},\mu_{2},T)
  $$
  acts as an isometry in each case, with $\partial^{\alpha}_{t}\big(I-\phi(\Delta)\big)^{\frac{\gamma}{2}}w=\big(I-\phi(\Delta)\big)^{\frac{\gamma}{2}}\partial^{\alpha}_{t}w$.

  \item The space $C^{\infty}_{c}\big((0,\infty)\times\mathbb{R}^{d}\big)$ is dense in $\mathcal{H}^{\alpha,\phi,\sigma+2}_{p,q,0}(\mu_{1},\mu_{2},T)$.
\end{enumerate}
\end{lemma}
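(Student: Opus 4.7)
The three items rest on the fact that the underlying Bochner spaces $\mathcal{H}^{\phi,s}_{p,q}(\mu_{1},\mu_{2},T) = L_{q}((0,T),\mu_{2}dt;H^{s,\phi}_{p}(\mu_{1}))$ are Banach, which follows from the standard Muckenhoupt-weighted $L_{p}$ theory together with the isometric identification of $H^{s,\phi}_{p}(\mu_{1})$ with $L_{p}(\mu_{1})$ via $(I-\phi(\Delta))^{s/2}$. My plan is to treat (i) and (ii) by direct extraction from defining sequences, and (iii) by a standard cutoff-and-mollification argument, with the caveat that the Caputo derivative is anchored at $t=0$.

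For (i), I would embed $\mathcal{H}^{\alpha,\phi,\sigma+2}_{p,q}(\mu_{1},\mu_{2},T)$ isometrically into the direct sum $\mathcal{H}^{\phi,\sigma+2}_{p,q} \oplus \mathcal{H}^{\phi,\sigma}_{p,q}$ via $w\mapsto(w,\partial^{\alpha}_{t}w)$. Given a Cauchy sequence $\{w^{k}\}$ with limits $w$ and $v$ in the two summands, I would extract, by diagonalization, a single sequence $\{w^{k}_{n_{k}}\}$ from the individual defining sequences satisfying $\|w^{k}_{n_{k}}-w^{k}\|_{\mathcal{H}^{\phi,\sigma+2}_{p,q}}+\|\partial^{\alpha}_{t}w^{k}_{n_{k}}-\partial^{\alpha}_{t}w^{k}\|_{\mathcal{H}^{\phi,\sigma}_{p,q}}<1/k$, which is then a defining sequence for $w$ with $\partial^{\alpha}_{t}w=v$. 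The zero-initial-trace condition $w^{k}_{n_{k}}(0,\cdot)=0$ is preserved throughout for the subspace $\mathcal{H}^{\alpha,\phi,\sigma+2}_{p,q,0}$. For (ii), by the very definition of the $H^{s,\phi}_{p}(\mu_{1})$-norm, $(I-\phi(\Delta))^{\gamma/2}$ is an isometry from $H^{s,\phi}_{p}(\mu_{1})$ onto $H^{s-\gamma,\phi}_{p}(\mu_{1})$; it commutes with $\partial^{\alpha}_{t}$ on any smooth approximating sequence (since it acts only in $x$), so applying it termwise to a defining sequence yields a defining sequence for the image with matching norm.

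Item (iii) is the main work. Starting from $w$ with defining sequence $\{w_{n}\}$ satisfying $w_{n}(0,\cdot)=0$, I would reduce to approximating each $w_{n}$ by elements of $C^{\infty}_{c}((0,\infty)\times\mathbb{R}^{d})$. First extend $w_{n}$ by zero to $t\le 0$; because $w_{n}(0,\cdot)=0$, the extension is regular enough that its Caputo derivative on $(0,T)$ agrees with the Riemann--Liouville derivative of the extended function on all of $\mathbb{R}$. The approximation then proceeds by (a) multiplying by a spatial cutoff $\chi(x/R)$ and sending $R\to\infty$; (b) convolving in $x$ with a Schwartz mollifier; (c) convolving in $t$ with a mollifier supported in $[\varepsilon/2,\varepsilon]$, which produces a smooth function vanishing on $(-\infty,\varepsilon/2)$; (d) multiplying by a temporal cutoff supported in a compact subset of $(0,T)$. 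Convergence at each step is a consequence of Remark \ref{Ap weight proposition}(iii) and the fact that these operations commute with both $(I-\phi(\Delta))^{(\sigma+2)/2}$ and $\partial^{\alpha}_{t}$ once the zero-extension is in place.

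The main obstacle is step (c): the Caputo derivative is not translation invariant in $t$, so naive mollification in time would not commute with $\partial^{\alpha}_{t}$. The zero-extension trick converts $\partial^{\alpha}_{t}$ into an honest convolution operator on $\mathbb{R}$, after which a mollifier supported slightly to the right of $0$ yields a smooth approximation that still vanishes at $t=0$ and whose fractional derivative is well-controlled in the weighted norm; once this is done, the remaining steps are routine in view of Remark \ref{Ap weight proposition}(iii).
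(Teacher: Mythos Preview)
Your outline for (i), (ii), and the temporal part of (iii) matches the paper's argument. The gap is in the spatial cutoff step (a) of (iii). You justify the convergence of $\chi(x/R)w_{n}\to w_{n}$ in $\mathcal{H}^{\phi,\sigma+2}_{p,q}$ by asserting that multiplication by $\chi(\cdot/R)$ ``commutes with $(I-\phi(\Delta))^{(\sigma+2)/2}$''. This is false: pointwise multiplication is not a Fourier multiplier, and for a general nonlocal operator like $\phi(\Delta)$ there is no reason for the commutator $[(I-\phi(\Delta))^{(\sigma+2)/2},\chi(\cdot/R)]$ to vanish or even to be small without a separate pointwise-multiplier theorem for $H^{s,\phi}_{p}(\mu_{1})$, which you have not established. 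The same objection applies, in a milder form, to your temporal cutoff in step (d): it does not commute with $\partial^{\alpha}_{t}$, and a cutoff ``supported in a compact subset of $(0,T)$'' would destroy the approximation near $t=T$; the correct choice is a cutoff equal to $1$ on $[0,T]$ so that on $(0,T)$ nothing changes.

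The paper avoids the spatial-cutoff difficulty by an indirect route. It first proves density only for $\sigma=0$, where the smooth defining function can be assumed compactly supported in $x$ at the outset. For general $\sigma$ it applies the isometry $(I-\phi(\Delta))^{\sigma/2}$ to reduce to $\sigma=0$, approximates there, and then pulls back by $(I-\phi(\Delta))^{-\sigma/2}$. The resulting functions $v_{n}$ are smooth with compact \emph{temporal} support but not compact \emph{spatial} support, so a second spatial cutoff $v_{n,i}=\xi(x/i)v_{n}$ is needed. The point is that this last cutoff is controlled not in $\mathcal{H}^{\alpha,\phi,\sigma+2}_{p,q}$ directly, but in the classical integer-order space $\mathcal{H}^{\alpha,2k}_{p,q}$ for $2k\ge\sigma+2$, where multiplication by a smooth cutoff is elementary (Leibniz rule); the weighted Littlewood--Paley characterization (Lemma~\ref{weighted Lp Littlewood-Paley}) then provides the embedding $\mathcal{H}^{\alpha,2k}_{p,q}\subset\mathcal{H}^{\alpha,\phi,\sigma+2}_{p,q}$ that closes the argument. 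Your direct approach would need either this detour or a genuine multiplier lemma for $H^{s,\phi}_{p}(\mu_{1})$.
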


\begin{proof}
The proof follows arguments similar to those in \cite{Kim2,Han}. We focus only on verifying (iii).

We begin by proving the density of $C^{\infty}_{c}((0,\infty)\times\mathbb{R}^{d})$ in $\mathcal{H}^{\alpha,\phi,2}_{p,q,0}(\mu_{1},\mu_{2},T)$. For any $w\in\mathcal{H}^{\alpha,\phi,2}_{p,q,0}(\mu_{1},\mu_{2},T)$, we may assume without loss of generality that $w\in C^{\infty}([0,\infty)\times\mathbb{R}^{d})$ with $w(0,x)=0$, extended by zero for $t<0$. Furthermore, by multiplying with a spatial cutoff function, we can ensure $w$ has compact support in $x$.

Let $\zeta\in C^{\infty}_{c}\big((1,2)\big)$ and define the regularizations:
\begin{align*}
w^{\varepsilon}(t,x)&=\zeta_{\varepsilon}*w=\varepsilon^{-1}\int_{\mathbb{R}}w(s,x)\zeta\left(\frac{t-s}{\varepsilon}\right)ds,\\
h^{\varepsilon}(t,x)&=\zeta_{\varepsilon}*h=\varepsilon^{-1}\int_{\mathbb{R}}h(s,x)\zeta\left(\frac{t-s}{\varepsilon}\right)ds,\text{ where }h=\partial^{\alpha}_{t}w.
\end{align*}
These satisfy $w^{\varepsilon}=h^{\varepsilon}=0$ for $t<\varepsilon$, with $w^{\varepsilon},h^{\varepsilon}$ being infinitely differentiable in $(t,x)$. Moreover, as $\varepsilon\rightarrow 0$,
\begin{align*}
|w^{\varepsilon}-w|+|\phi(\Delta)w^{\varepsilon}-\phi(\Delta)w|&\rightarrow 0,\\
|h^{\varepsilon}-h|+|\phi(\Delta)h^{\varepsilon}-\phi(\Delta)h|&\rightarrow 0,
\end{align*}
uniformly on $[0,T]\times\mathbb{R}^{d}$.

Now take $\zeta_{1}\in C_{c}^{\infty}(\mathbb{R})$ satisfying $\zeta_{1}(t)=1$ on $[0,T]$ and $\zeta_{1}(t)=0$ for $t\geq T+1$. Define
$$
u^{\varepsilon}(t,x)=\zeta_{1}(t)w^{\varepsilon}(t,x)\in C_{c}^{\infty}\big((0,\infty)\times\mathbb{R}^{d}\big).
$$
For $t\leq T$, the dominated convergence theorem and Fubini's theorem yield:
\begin{align*}
\partial^{\alpha}_{t}u^{\varepsilon}&=\partial^{\alpha}_{t}w^{\varepsilon}=h^{\varepsilon}\rightarrow \partial_{t}^{\alpha}w=h\quad\text{in } \mathcal{L}_{p,q}(\mu_{1},\mu_{2},T),\\
u^{\varepsilon}&\rightarrow w\quad\text{in } \mathcal{H}^{\alpha,\phi,0}_{p,q}(\mu_{1},\mu_{2},T).
\end{align*}
This establishes the density of $C^{\infty}_{c}\big((0,\infty)\times\mathbb{R}^{d}\big)$ in $\mathcal{H}^{\alpha,\phi,2}_{p,q,0}(\mu_{1},\mu_{2},T)$.

For general $w\in\mathcal{H}^{\alpha,\phi,\sigma+2}_{p,q,0}(\mu_{1},\mu_{2},T)$, define
$$
u:=\big(I-\phi(\Delta)\big)^{\frac{\sigma}{2}}w\in\mathcal{H}^{\alpha,\phi,2}_{p,q,0}(\mu_{1},\mu_{2},T).
$$
Therefore, exists $\{u_{n}\}\subset C^{\infty}_{c}\big((0,\infty)\times\mathbb{R}^{d}\big)$  and  $u_{n}\rightarrow u$ in $\mathcal{H}^{\alpha,\phi,2}_{p,q,0}(\mu_{1},\mu_{2},T)$.

Let $v_{n}:=\big(I-\phi(\Delta)\big)^{-\frac{\sigma}{2}}u_{n}\in C^{\infty}\big((0,\infty)\times\mathbb{R}^{d}\big)$ with compact temporal support. The sequence $v_{n}$ converges to $w$ in $\mathcal{H}^{\alpha,\phi,\sigma+2}_{p,q,0}(\mu_{1},\mu_{2},T)$.

Take $\xi\in C_{c}^{\infty}(\mathbb{R}^{d})$ with $\xi=1$ on $B_{1}$ and $\xi=0$ outside $B_{2}$. Defining $v_{n,i}=\xi(x/i)v_{n}$, we obtain:
$$
v_{n,i}\in C_{c}^{\infty}\big((0,\infty)\times\mathbb{R}^{d}\big)\text{ and }v_{n,i}\rightarrow v_{n}\text{ in }\mathcal{H}^{\alpha,2k}_{p,q}(\mu_{1},\mu_{2},T)\text{ as }i\rightarrow\infty.
$$
For sufficiently large $k\in\mathbb{N}$ with $2k\geq\sigma+2$, Lemma \ref{weighted Lp Littlewood-Paley} gives the embeddings:
$$
\mathcal{H}^{\alpha,2k}_{p,q}(\mu_{1},\mu_{2},T)\subset\mathcal{H}^{\alpha,\phi,2k}_{p,q}(\mu_{1},\mu_{2},T)
\subset\mathcal{H}^{\alpha,\phi,\sigma+2}_{p,q}(\mu_{1},\mu_{2},T).
$$
The result follows by taking $i(n)=n$.
\end{proof}
The following integration by parts formula will be used in Section~4.

If $H,F$ are sufficiently smooth, then for any $0<m<M<\infty$,
\begin{equation}\label{part intefration formula}
\begin{split}
\int_{m\leq|x|\leq M}H(x)F(|x|)\,dx &= F(M)\int_{|x|\leq M}H(x)\,dx - F(m)\int_{|x|\leq m}H(x)\,dx\\
&\quad - \int_{m}^{M}F'(\rho)\left(\int_{|x|\leq\rho}H(x)\,dx\right)\,d\rho\\
&= \int_{m}^{M}F(\rho)\frac{d}{d\rho}\left(\int_{B_{\rho}(0)}H(x)\,dx\right)\,d\rho\\
&= \int_{m}^{M}F(\rho)\int_{\partial B_{\rho}(0)}H(x)\,dS\,d\rho.
\end{split}
\end{equation}
\section{Key Estimates}
In this section, we establish several crucial estimates that will play a fundamental role in Section 4.

Given subordinate Brownian motion $Y_t$ and its associated transition probability $p(t,x)$. Consider $X_{t}$ as an independent subordinator with characteristic exponent $\exp(-t\lambda^{\alpha})$, and define its inverse process:
$$
Q_{t} := \inf\big\{s : X_{s} > t\big\}.
$$
Then the subordinate Brownian motion $Y_{Q_{t}}$ possesses the transition probability
$$
\mathcal{S}_{\alpha,\phi}(s,z) = \int_{0}^{\infty} p(z,t) \varpi(s,t) dt,
$$
where $\varpi(t,r)$ denotes the transition probability of the inverse subordinator $Q_{t}$. Indeed, $\mathcal{S}_{\alpha,\phi}(s,z)$ serves as the fundamental solution to the equation:
$$
\partial^{\alpha}_{t} w(s,z) = \phi(\Delta) w(s,z), \quad w(0,z) = \delta_{0}.
$$

For $\gamma \in \mathbb{R}$, we define
$$
\varpi_{\alpha,\gamma}(t,r) = D^{\gamma-\alpha}_{t} \varpi(t,r)
$$
and introduce the generalized solution kernel:
$$
\mathcal{S}_{\alpha,\gamma,\phi}(s,z) = \int_{0}^{\infty} p(z,t) \varpi_{\alpha,\gamma}(s,t) dt, \quad (s,z) \in \mathbb{R}^{+} \times \mathbb{R}^{d} \setminus \{0\}.
$$

The following proposition, whose proof can be found in Kim \cite{Kim1}, provides essential estimates for these objects.
\begin{proposition}\label{Pro.soltioun function}
For the $\mathcal{S}_{\alpha,\gamma,\phi}(t,x)$, the following~proposition~hold:
\begin{enumerate}[\rm(i)]
  \item
  \begin{align*}
  \mathcal{F}(\mathcal{S}_{\alpha,\gamma,\phi})(t,\xi)&=t^{\alpha-\gamma}E_{\alpha,1+\alpha-\gamma}(-t^{\alpha}\phi(|\xi|^{2})),\text{ }t>0,\xi\in\mathbb{R}^{d},\\
  D^{\gamma-\alpha}_{t}\mathcal{S}_{\alpha,\phi}(t,x)&=\mathcal{S}_{\alpha,\gamma,\phi}(t,x),\text{ }(t,x)
  \in\mathbb{R}^{+}\times\mathbb{R}^{d}\setminus\{0\}.
  \end{align*}
  \item For $k\in\mathbb{N}$, there exists the constant $C(k,\alpha,\gamma,\delta_{0},d)$ s.t.
  \begin{align*}
  \left|\partial^{k}_{x}\mathcal{S}_{\alpha,\gamma,\phi}(t,x)\right|\leq Ct^{2\alpha-\gamma}\frac{\phi(|x|^{-2})}{|x|^{d+k}}.
  \end{align*}
  Moreover, if $t^{\alpha}\phi(|x|^{-2})\geq 1$,~it holds that
  \begin{align*}
  \left|\partial^{k}_{x}\mathcal{S}_{\alpha,\gamma,\phi}(t,x)\right|\leq C\int_{(\phi(|x|^{-2}))^{-1}}^{2t^{\alpha}}\big(\phi^{-1}(\rho^{-1})\big)^{\frac{d+k}{2}}t^{-\gamma}\,d\rho.
  \end{align*}
  \item
  \begin{align*}
  \int_{\mathbb{R}^{d}}\sup_{[\varepsilon,T]}\left|\mathcal{S}_{\alpha,\gamma,\phi}(t,x)\right|\,dx<\infty,\text{ for any }0<\varepsilon<T<\infty,
  \end{align*}
  and
  \begin{align*}
  \int_{\mathbb{R}^{d}}\left|\mathcal{S}_{\alpha,\gamma,\phi}(t,x)\right|\,dx\leq Ct^{\alpha-\gamma}.
  \end{align*}
\end{enumerate}
\end{proposition}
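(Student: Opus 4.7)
The plan is to handle the three parts of Proposition \ref{Pro.soltioun function} in sequence: (i) via Fourier-Laplace methods combined with Proposition \ref{M.T.F.P}; (ii) via the Fourier representation together with the lower scaling condition \eqref{lower scailing condition}, splitting the physical-space region by whether $t^{\alpha}\phi(|x|^{-2})$ is small or large; and (iii) by integrating the bounds from (ii) against the same splitting.

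For (i), since $\mathcal{S}_{\alpha,\phi}$ is the fundamental solution of $\partial_{t}^{\alpha}w=\phi(\Delta)w$ with $w(0,\cdot)=\delta_{0}$, applying $\mathcal{F}$ in $x$ reduces the problem to the scalar Caputo ODE $\partial_{t}^{\alpha}\hat{w}(t,\xi)=-\phi(|\xi|^{2})\hat{w}(t,\xi)$ with $\hat{w}(0,\xi)=1$, whose unique solution is $E_{\alpha,1}(-t^{\alpha}\phi(|\xi|^{2}))$. For general $\gamma$, I would apply $D_{t}^{\gamma-\alpha}$ and work through the Laplace transform: by Proposition \ref{M.T.F.P}(i), the Laplace transform of $E_{\alpha,1}(-t^{\alpha}\rho)$ is $z^{\alpha-1}/(z^{\alpha}+\rho)$, and multiplication by $z^{\gamma-\alpha}$ yields $z^{\gamma-1}/(z^{\alpha}+\rho)$, which inverts to $t^{\alpha-\gamma}E_{\alpha,1+\alpha-\gamma}(-t^{\alpha}\rho)$. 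The identity $D_{t}^{\gamma-\alpha}\mathcal{S}_{\alpha,\phi}=\mathcal{S}_{\alpha,\gamma,\phi}$ then follows directly by commuting the operator with the $t$-integral in the definition of $\mathcal{S}_{\alpha,\gamma,\phi}$, which is valid away from $x=0$ thanks to the rapid spatial decay of $p(\tau,\cdot)$.

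For (ii), starting from
\[
\partial_{x}^{k}\mathcal{S}_{\alpha,\gamma,\phi}(t,x)=t^{\alpha-\gamma}\mathcal{F}^{-1}\bigl[(i\xi)^{k}E_{\alpha,1+\alpha-\gamma}(-t^{\alpha}\phi(|\xi|^{2}))\bigr](x),
\]
and using the sharp Mittag-Leffler bound $|E_{\alpha,\beta}(-\rho)|\lesssim (1+\rho)^{-1}$, the first (unconditional) estimate is obtained by integrating by parts $d+k$ times in $\xi$, converting $(i\xi)^{k}e^{ix\cdot\xi}$ into $|x|^{-d-k}$ times derivatives of the symbol and controlling the derivatives of $\phi(|\xi|^{2})$ through \eqref{B.S.T,func.bdd}; the resulting integral produces the factor $\phi(|x|^{-2})|x|^{-d-k}$, with the $t^{2\alpha-\gamma}$ prefactor coming from the large-argument decay of $E_{\alpha,1+\alpha-\gamma}$ combined with the $t^{\alpha-\gamma}$ already outside. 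The improved estimate in the regime $t^{\alpha}\phi(|x|^{-2})\geq 1$ comes from a layer-cake decomposition: expressing $E_{\alpha,1+\alpha-\gamma}(-t^{\alpha}\phi(|\xi|^{2}))$ via its distribution function on levels $\rho\in((\phi(|x|^{-2}))^{-1},2t^{\alpha})$ (the lower cutoff from the $|\xi|\sim|x|^{-1}$ contribution, the upper from $|E_{\alpha,\beta}(-\rho)|\lesssim\rho^{-1}$), changing variable via $|\xi|\sim(\phi^{-1}(\rho^{-1}))^{1/2}$, and using \eqref{lower scailing condition} to control the Jacobian produces the claimed integrand $(\phi^{-1}(\rho^{-1}))^{(d+k)/2}t^{-\gamma}$.

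For (iii), I would split $\mathbb{R}^{d}=A\cup A^{c}$ with $A:=\{t^{\alpha}\phi(|x|^{-2})\geq 1\}$. On $A^{c}$, the first bound in (ii) together with polar coordinates gives $\int_{A^{c}}t^{2\alpha-\gamma}\phi(|x|^{-2})/|x|^{d}\,dx\leq C t^{\alpha-\gamma}$ by \eqref{Pro.Convergence}. On $A$, integrating the second bound and making the substitution $s=\phi^{-1}(\rho^{-1})$ combined with \eqref{lower scailing condition} yields a contribution of the same order $t^{\alpha-\gamma}$, and the uniform-in-$t$ estimate on $[\varepsilon,T]$ follows from the monotonicity of these bounds in $t$. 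The main technical obstacle is the layer-cake argument in (ii) in the regime $t^{\alpha}\phi(|x|^{-2})\geq 1$: integration by parts with a non-homogeneous symbol $\phi(|\xi|^{2})$ requires careful accounting of the derivatives, and only under the lower scaling assumption $c_{1}(M/m)^{\delta_{0}}\leq\phi(M)/\phi(m)$ does the level-set change of variable produce the clean representation in terms of $\phi^{-1}(\rho^{-1})$ without logarithmic losses or boundary correction terms.
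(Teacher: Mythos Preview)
The paper does not give its own proof of Proposition~\ref{Pro.soltioun function}; it states immediately before the proposition that ``the proof can be found in Kim~\cite{Kim1}'' and records only the statement. There is thus no in-paper argument to compare your sketch against directly.

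That said, your plan for (i) and (iii) matches the standard route and is essentially what one finds in \cite{Kim1}. For (ii), however, the approach there is different from yours. Rather than integrating by parts in the Fourier variable, Kim uses the probabilistic representation
\[
\mathcal{S}_{\alpha,\gamma,\phi}(t,x)=\int_{0}^{\infty}p(\tau,x)\,\varpi_{\alpha,\gamma}(t,\tau)\,d\tau,
\]
inserts already-known pointwise bounds on the subordinate heat kernel $p(\tau,x)$ and its spatial derivatives (which under the lower scaling condition give exactly $\tau\phi(|x|^{-2})/|x|^{d+k}$ off-diagonally and $(\phi^{-1}(\tau^{-1}))^{(d+k)/2}$ on-diagonally), and combines them with bounds on $\varpi_{\alpha,\gamma}(t,\tau)$ coming from the inverse stable subordinator; the integral in $\tau$ then produces the two displayed estimates. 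Your Fourier integration-by-parts route is plausible in spirit but has a genuine technical issue: obtaining the factor $|x|^{-d-k}$ by $d+k$ integrations by parts only works cleanly when $d+k$ is even (via powers of $\Delta_{\xi}$), and for odd $d+k$ one needs an extra device such as passing to dimension $d+2$ and descending, or a dyadic Littlewood--Paley argument. This is precisely why the heat-kernel representation is preferred in \cite{Kim1}: the delicate parity and endpoint-integrability questions near $\xi=0$ and $\xi=\infty$ have already been absorbed into the bounds on $p(\tau,x)$.
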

\begin{lemma}[see \cite{Kim1,Kim3}]\label{Solution Resp}
The following facts hold:
\begin{enumerate}[\rm(i)]
\item
For $w\in C_{c}^{\infty}\big((0,\infty)\times\mathbb{R}^{d}\big)$ and let $h=\partial^{\alpha}_{t}w-\phi(\Delta)w$, then it holds
\begin{align}\label{Rep.solution}
w(t,x)=\int_{0}^{t}\int_{\mathbb{R}^{d}}\mathcal{S}_{\alpha,1,\phi}(t-\tau,x-y)h(\tau,y)\,dyd\tau.
\end{align}
\item
Let $h\in C^{\infty}_{c}\big((0,\infty)\times\mathbb{R}^{d}\big)$ and $w(t,x)$ is defined as \eqref{Rep.solution}, then $w(t,x)$ satisfy
\eqref{TSFE} pointwise.
\end{enumerate}
\end{lemma}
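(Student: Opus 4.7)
The cleanest path is via the Fourier transform in $x$, which converts both parts of the lemma into statements about the scalar Mittag-Leffler kernel $t^{\alpha-1}E_{\alpha,\alpha}(-\lambda t^\alpha)$ with $\lambda = \phi(|\xi|^2)$. The key identity I will use throughout is Proposition \ref{Pro.soltioun function}(i) with $\gamma=1$, namely
\[
\mathcal{F}\bigl(\mathcal{S}_{\alpha,1,\phi}\bigr)(t,\xi) \;=\; t^{\alpha-1}E_{\alpha,\alpha}\bigl(-t^{\alpha}\phi(|\xi|^2)\bigr),
\]
so that convolution in $x$ with $\mathcal{S}_{\alpha,1,\phi}$ corresponds to multiplication by this Mittag-Leffler symbol on the Fourier side.

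For part (i), fix $w\in C_c^{\infty}((0,\infty)\times\mathbb{R}^d)$ and set $h := \partial_t^\alpha w - \phi(\Delta)w$; then $h\in C_c^{\infty}$ and $w(0,x)=0$. Taking the spatial Fourier transform gives the scalar fractional ODE
\[
\partial_t^\alpha \hat{w}(t,\xi) \;+\; \phi(|\xi|^2)\,\hat{w}(t,\xi) \;=\; \hat{h}(t,\xi), \qquad \hat{w}(0,\xi)=0,
\]
for each fixed $\xi$. I would solve this by applying the Laplace transform in $t$ and using Proposition \ref{M.T.F.P}(i): the Laplace transform of the right-hand side over $(s^\alpha + \phi(|\xi|^2))$ matches the Laplace transform of the convolution of $\hat{h}(\cdot,\xi)$ against $t^{\alpha-1}E_{\alpha,\alpha}(-t^\alpha\phi(|\xi|^2))$. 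Uniqueness of Laplace inversion then yields
\[
\hat{w}(t,\xi) \;=\; \int_{0}^{t}(t-\tau)^{\alpha-1}E_{\alpha,\alpha}\bigl(-(t-\tau)^{\alpha}\phi(|\xi|^2)\bigr)\,\hat{h}(\tau,\xi)\,d\tau,
\]
and inverting the Fourier transform (which is legal since $h\in\mathcal{S}$ in $x$ uniformly in $\tau$) produces the claimed representation \eqref{Rep.solution}. Fubini is justified by the compact support of $h$ together with the $L^1$ bound $\int_{\mathbb{R}^d}|\mathcal{S}_{\alpha,1,\phi}(t,x)|dx\leq Ct^{\alpha-1}$ from Proposition \ref{Pro.soltioun function}(iii), whose singularity at $0$ is integrable.

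For part (ii), given $h\in C_c^{\infty}$ I would define $w$ by \eqref{Rep.solution} and verify the TSFE by computing $\partial_t^\alpha w$ and $\phi(\Delta)w$ separately. For $\phi(\Delta)w$, I differentiate under the integral using Proposition \ref{Pro.soltioun function}(ii), whose bound $|\partial_x^k\mathcal{S}_{\alpha,1,\phi}|\lesssim t^{2\alpha-1}\phi(|x|^{-2})/|x|^{d+k}$ together with \eqref{Pro.Convergence} gives spatial integrability. For $\partial_t^\alpha w$, I rewrite the convolution via the identity $\mathcal{S}_{\alpha,1,\phi} = D_t^{1-\alpha}\mathcal{S}_{\alpha,\phi}$ from Proposition \ref{Pro.soltioun function}(i), so that $w = \mathcal{S}_{\alpha,\phi} \ast_{(t,x)} h$ (in the convolution sense, exploiting $h=0$ near $t=0$), and then apply $\partial_t^\alpha$ to $\mathcal{S}_{\alpha,\phi}$; on the Fourier side this produces $-\phi(|\xi|^2)E_{\alpha,1}(-t^\alpha\phi(|\xi|^2))$ via Proposition \ref{M.T.F.P}(iii), combined with the delta contribution from the initial value $E_{\alpha,1}(0)=1$, yielding exactly $\phi(\Delta)w + h$.

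The main technical obstacle is the weak singularity of the kernel at $\tau=t$: the factor $(t-\tau)^{\alpha-1}$ prevents naive differentiation under the integral, and the commutation of $\partial_t^\alpha$ with the convolution must be handled through the Riemann-Liouville formulation $\partial_t^\alpha = \frac{d}{dt}g_{1-\alpha}\ast$, where the extra integration absorbs the singularity before differentiation. I would handle this by first applying $g_{1-\alpha}\ast$ (which turns $\mathcal{S}_{\alpha,1,\phi}$ into $\mathcal{S}_{\alpha,\phi}$, with integrable behavior at $0$), then using the identity from Proposition \ref{M.T.F.P}(iii) to identify the $t$-derivative. All interchanges of integration are again justified by compact support of $h$ together with the $L^1$ bounds from Proposition \ref{Pro.soltioun function}(iii); since the argument is essentially contained in \cite{Kim1,Kim3}, I would state that these verifications follow those references.
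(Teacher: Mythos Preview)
Your proof sketch is correct and follows the standard Fourier--Laplace route: reduce via the spatial Fourier transform to the scalar fractional ODE $\partial_t^\alpha \hat w + \phi(|\xi|^2)\hat w = \hat h$, solve by Laplace transform using Proposition~\ref{M.T.F.P}(i), and invert; for part~(ii) verify the equation by differentiating under the integral and handling the $(t-\tau)^{\alpha-1}$ singularity through the Riemann--Liouville factorization $\partial_t^\alpha = \tfrac{d}{dt}\,g_{1-\alpha}\ast$.

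Note, however, that the paper does \emph{not} supply its own proof of this lemma: it is stated with the attribution ``see \cite{Kim1,Kim3}'' and no argument is given. Your approach is precisely the one carried out in those references, so there is nothing to contrast---you have simply filled in what the paper defers to the literature.
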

\begin{remark}
For any $T>0$,~Note that $\mathcal{S}_{\alpha,1,\phi}(t,x)$ is integrable on $[0,T]\times\mathbb{R}^{d}$, thus the operator $\mathscr{L}_{0}$ and $\mathscr{L}$ are well defined on $C^{\infty}_{c}\big((0,\infty)\times\mathbb{R}^{d}\big)$,
\begin{align*}
\mathscr{L}_{0}h:=\int_{-\infty}^{t}\int_{\mathbb{R}^{d}}\mathcal{S}_{\alpha,1,\phi}(t-\tau,x-y)h(\tau,y)\,dyd\tau,
\end{align*}
\begin{align*}
\mathscr{L}h:=\int_{-\infty}^{t}\int_{\mathbb{R}^{d}}\mathcal{S}_{\alpha,1,\phi}(t-\tau,y)\phi(\Delta)h(\tau,x-y)\,dyd\tau.
\end{align*}
Moreover, from {\rm\cite{Kim1}},
\begin{align*}
\mathscr{L}h=\phi(\Delta)\mathscr{L}_{0}h:=\lim_{\varepsilon\rightarrow 0}\int_{-\infty}^{t-\varepsilon}\int_{\mathbb{R}^{d}}
\mathcal{S}_{\alpha,1+\alpha,\phi}(t-\tau,y)h(\tau,x-y)\,dyd\tau.
\end{align*}
\end{remark}
\begin{lemma}
Let $1<p,q<\infty$, $h\in L_{q}(\mathbb{R};L_{p}(\mathbb{R}^{d}))$ and let $w$ satisfy the time-space fractional equations (TSFEs) \eqref{TSFE}. Then it holds that
\begin{align}\label{independent estimate}
\left\|\phi(\Delta)w\right\|_{L_{q}(\mathbb{R};L_{p}(\mathbb{R}^{d}))}
\leq C(\alpha,d,p,q,\delta_{0})\left\|h\right\|_{L_{q}(\mathbb{R};L_{p}(\mathbb{R}^{d}))}
\end{align}
\end{lemma}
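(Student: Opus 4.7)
The plan is to recast $h\mapsto\phi(\Delta)w$ as a constant-coefficient Fourier multiplier operator on $\mathbb{R}\times\mathbb{R}^{d}$ and reduce the desired mixed-norm bound to a combination of the H\"{o}rmander multiplier theorem in the spatial variable and the Mihlin-type multiplier theorem on $\mathbb{R}$ in the time variable (the latter being equivalent, via a dyadic Littlewood-Paley decomposition, to $L_{q}$-boundedness of the Hilbert transform). Extending $h$ by zero for $t<0$ and exploiting $w(0,\cdot)=0$, the joint $(t,x)$-Fourier transform of $\partial_{t}^{\alpha}w=\phi(\Delta)w+h$ yields
\begin{equation*}
\bigl((i\lambda)^{\alpha}+\phi(|\xi|^{2})\bigr)\widehat{w}(\lambda,\xi)=\widehat{h}(\lambda,\xi),
\end{equation*}
so that the operator in question has symbol
\begin{equation*}
m(\lambda,\xi)=\frac{-\phi(|\xi|^{2})}{(i\lambda)^{\alpha}+\phi(|\xi|^{2})}.
\end{equation*}
Since $\alpha\in(0,1)$, the argument of $(i\lambda)^{\alpha}$ lies in $(-\alpha\pi/2,\alpha\pi/2)\subset(-\pi/2,\pi/2)$, giving the sectorial lower bound $|(i\lambda)^{\alpha}+\phi(|\xi|^{2})|\gtrsim|\lambda|^{\alpha}+\phi(|\xi|^{2})$ and in particular $\|m\|_{\infty}\le 1$.

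I would next verify the requisite derivative estimates using the Bernstein bound \eqref{B.S.T,func.bdd}. Iterating the chain rule, together with $|x^{n}\phi^{(n)}(x)|\lesssim\phi(x)$, produces $|\xi|^{|\gamma|}|\partial_{\xi}^{\gamma}\phi(|\xi|^{2})|\lesssim\phi(|\xi|^{2})$ for every multi-index $\gamma$; combining this with the sectorial lower bound and Leibniz's rule applied to the quotient yields
\begin{equation*}
\sup_{\lambda\in\mathbb{R}}\ |\xi|^{|\gamma|}\bigl|\partial_{\xi}^{\gamma}m(\lambda,\xi)\bigr|\le C_{\gamma},
\end{equation*}
so H\"{o}rmander's multiplier theorem furnishes an $L_{p}(\mathbb{R}^{d})$ operator bound for $m(\lambda,\cdot)$ uniform in $\lambda\in\mathbb{R}$. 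In the $\lambda$-direction, direct differentiation gives
\begin{equation*}
\lambda\,\partial_{\lambda}m(\lambda,\xi)=\frac{\alpha(i\lambda)^{\alpha}\phi(|\xi|^{2})}{\bigl((i\lambda)^{\alpha}+\phi(|\xi|^{2})\bigr)^{2}},
\end{equation*}
whose modulus is controlled by AM-GM applied to $|\lambda|^{\alpha}$ and $\phi(|\xi|^{2})$, and the analogous Mihlin conditions $|\lambda|^{k}|\partial_{\lambda}^{k}m(\lambda,\xi)|\le C_{k}$ follow by induction, uniformly in $\xi\in\mathbb{R}^{d}$. The claim then follows either from a product Mihlin multiplier theorem on $\mathbb{R}\times\mathbb{R}^{d}$ or, more in line with the stated strategy, from an operator-valued Fourier-multiplier argument on the time line: the uniform-in-$\lambda$ spatial bound makes $\lambda\mapsto m(\lambda,\cdot)$ a bounded $\mathcal{L}(L_{p}(\mathbb{R}^{d}))$-valued symbol whose Mihlin norm is controlled by the $\partial_{\lambda}$-estimate above, yielding the $L_{q}(\mathbb{R};L_{p}(\mathbb{R}^{d}))$ bound via the Hilbert transform.

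The principal obstacle is the careful propagation of the Bernstein derivative bound through the quotient defining $m$, keeping the estimates uniform in the complementary variable. The sectorial inequality $|(i\lambda)^{\alpha}+\phi(|\xi|^{2})|\gtrsim|\lambda|^{\alpha}+\phi(|\xi|^{2})$ degenerates as $\alpha\uparrow 1$, when the branch-cut direction of $(i\lambda)^{\alpha}$ approaches the negative real axis, and must be invoked carefully at every step; this is precisely where the fractional hypothesis $\alpha\in(0,1)$ enters essentially. Once these estimates are in hand, combining H\"{o}rmander on $\mathbb{R}^{d}$ with the Hilbert-transform-based Mihlin argument on $\mathbb{R}$ bypasses the more delicate BMO estimates for the derivative of the fundamental solution used in \cite{Kim1}, which is the main conceptual simplification our alternative proof offers.
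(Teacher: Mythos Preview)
Your approach is correct but follows a genuinely different route from the paper's. The paper stays in the time domain: it uses the representation $\mathcal{F}_{x}[\phi(\Delta)w](t,\xi)=-\int_{0}^{t}(t-s)^{\alpha-1}\phi(|\xi|^{2})E_{\alpha,\alpha}\bigl(-(t-s)^{\alpha}\phi(|\xi|^{2})\bigr)\mathcal{F}_{x}[h](s,\xi)\,ds$, factors out $(t-s)^{-1}$, and proves that $\rho E_{\alpha,\alpha}(-\rho)$ with $\rho=(t-s)^{\alpha}\phi(|\xi|^{2})$ is a H\"{o}rmander multiplier in $\xi$ whose $\mathscr{M}_{p}$-norm is independent of $t-s$. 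This last step is obtained by a scaling trick (the intermediate value theorem gives $\lambda>0$ with $\phi(|\lambda\xi|^{2})=t^{\alpha}\phi(|\xi|^{2})$, which uses the lower scaling hypothesis) together with a direct verification of the H\"{o}rmander condition for $\phi(|\xi|^{2})E_{\alpha,\alpha}(-\phi(|\xi|^{2}))$ via the Wright-function representation for small $|\xi|$ and a contour-integral formula for large $|\xi|$. The remaining $(t-s)^{-1}$ kernel is then handled by the $L_{q}$-boundedness of the Hilbert transform on the time line. Your argument instead passes to the joint symbol $m(\lambda,\xi)=-\phi(|\xi|^{2})/\bigl((i\lambda)^{\alpha}+\phi(|\xi|^{2})\bigr)$ and works entirely with algebraic manipulations of this rational expression, the sectorial lower bound, and the Bernstein derivative estimate \eqref{B.S.T,func.bdd}; no Mittag--Leffler asymptotics, no Wright function, no scaling trick (and, in fact, apparently no use of the lower scaling exponent $\delta_{0}$) are needed.

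One point deserves care. When you invoke ``an operator-valued Fourier-multiplier argument on the time line'' with $\lambda\mapsto m(\lambda,\cdot)\in\mathcal{L}(L_{p}(\mathbb{R}^{d}))$, uniform operator-norm boundedness of $\{m(\lambda,\cdot)\}$ and $\{\lambda\partial_{\lambda}m(\lambda,\cdot)\}$ is \emph{not} by itself sufficient for $p\neq 2$: the operator-valued Mihlin theorem (Weis) requires $R$-boundedness of these families. This is not a fatal gap---a family of scalar Fourier multipliers on $L_{p}(\mathbb{R}^{d})$ satisfying the H\"{o}rmander condition uniformly is automatically $R$-bounded (via weighted estimates or extrapolation)---but you should say so explicitly. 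Your alternative, the product Marcinkiewicz--Mihlin theorem on $\mathbb{R}\times\mathbb{R}^{d}$, sidesteps this entirely once you check the mixed conditions $|\lambda|^{j}|\xi|^{|\gamma|}|\partial_{\lambda}^{j}\partial_{\xi}^{\gamma}m(\lambda,\xi)|\le C$, which follow from the same sectorial and Bernstein bounds.
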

\begin{remark}
In the proof of Kim \cite[Theorem 4.10]{Kim1}, the establishment of \eqref{independent estimate} primarily relies on the sharp maximum function estimate for the operator $\mathscr{L}$, as detailed in \cite[Lemmas 4.3-4.7]{Kim1}. Here, we provide an alternative proof, which mainly relies the Marcinkiewicz multiplier theory on $L_{p}(\mathbb{R}^{d+1})$. The proof of Lemma {\rm\ref{independent estimate}} differs from that of Kim {\rm\cite[Theorem 4.10]{Kim1}}.
\end{remark}
\begin{proof}
The proof relies the Marcinkiewicz multiplier theory on $L_{p}(\mathbb{R}^{d+1})$.
Since $C_{c}^{\infty}(\mathbb{R}\times\mathbb{R}^{d})$ is dense in $L_{q}(\mathbb{R};L_{p}(\mathbb{R}^{d}))$, it suffices to verify \eqref{independent estimate} for $h\in C^{\infty}_{c}(\mathbb{R}\times\mathbb{R}^{d})$.

From Lemma \ref{Rep.solution} and Proposition \ref{Pro.soltioun function}, we have
\begin{align*}
w(t,x)=\int_{\mathbb{R}^{d+1}}\textbf{I}_{0<t-\tau}\mathcal{S}_{\alpha,1,\phi}(t-\tau,y)h(\tau,x-y)\,dy\,d\tau,\text{ }
\mathcal{F}(\mathcal{S}_{\alpha,1,\phi})(t,\xi)&=t^{\alpha-1}E_{\alpha,\alpha}(-t^{\alpha}\phi(|\xi|^{2})).
\end{align*}
Thus,
\begin{align*}
\mathcal{F}_{d+1}\big(\phi(\Delta)w\big)(\vartheta,\xi)&=\bigg(\int_{\mathbb{R}}e^{-it\vartheta}\textbf{I}_{t>0}
\mathcal{F}_{d}\big(\phi(\Delta)
\mathcal{S}_{\alpha,1,\phi}\big)(t,\xi)\,dt\bigg)\hat{h}(\vartheta,\xi)\\
&=\bigg(\int_{0}^{\infty}e^{-it\vartheta}\phi(|\xi|^{2})t^{\alpha-1}E_{\alpha,\alpha}(-t^{\alpha}\phi(|\xi|^{2}))\bigg)\hat{h}(\vartheta,\xi)\\
&=\frac{\phi(|\xi|^{2})}{(i\vartheta)^{\alpha}+\phi(|\xi|^{2})}\hat{h}(\vartheta,\xi):=m(\vartheta,\xi)\hat{h}(\vartheta,\xi),
\end{align*}
where we use the Laplace transform of the Mittag-Leffler function \eqref{Laplace of Mittag} with $s=i\vartheta$, $\rho=\phi(|\xi|^{2})$ and $\beta=\alpha$,
$$
\int_{0}^{\infty}e^{-st}t^{\alpha-\beta}E_{\alpha,\beta}(-t^{\alpha}\rho)\,dt=\frac{t^{\alpha-\beta}}{s^{\alpha}+\rho}.
$$
Next, we prove that $m(\vartheta,\xi)$ is a Marcinkiewicz multiplier on $L_{p}(\mathbb{R}^{d+1})$.

Recall that for any multi-index $\gamma$,
$$
D^{\gamma}_{\xi}\phi(|\xi|^{2})=\sum_{\frac{|\gamma|}{2}\leq j\leq|\gamma|}\phi^{(j)}(|\xi|^{2})\prod_{i=1}^{d}|\xi_{i}|^{\beta_{i}},
\quad \text{ where }\sum_{i=1}^{d}\beta_{i}=2j-|\gamma|,
$$
combined with \eqref{B.S.T,func.bdd}, this implies $\big|D^{\gamma}_{\xi}\phi(|\xi|^{2})\big|\lesssim \phi(|\xi|^{2})|\xi|^{-|\gamma|}$. By the Leibniz rule, we obtain for any decomposition $\gamma_{1}+\gamma_{2}=\gamma$ with $\gamma_{2}\neq 0$
\begin{align*}
\bigg|D_{\xi}^{\gamma}\big[m(\vartheta,\xi)\big]\bigg|&\lesssim\sum_{\substack{{\gamma_{1}+\gamma_{2}=\gamma},\\ \beta_{1}+..+\beta_{l}=\gamma_{2},\\ 1\leq l\leq |\gamma_{2}|}}\bigg|D^{\gamma_{1}}_{\xi}\phi(|\xi|^{2})\frac{1}{\big[(i\vartheta)^{\alpha}+\phi(|\xi|^{2})\big]^{l+1}}
\prod_{i=1}^{l}D^{\beta_{i}}_{\xi}\phi(|\xi|^{2})\bigg|\\
&\lesssim\sum_{\substack{{\gamma_{1}+\gamma_{2}=\gamma},\\ \beta_{1}+..+\beta_{l}=\gamma_{2},\\ 1\leq l\leq |\gamma_{2}|}}
\bigg|\phi(|\xi|^{2})|\xi|^{-|\gamma_{1}|}\frac{(\phi(|\xi|^{2}))^{l}|\xi|^{-|\gamma_{2}|}}
{\big[(i\vartheta)^{\alpha}+\phi(|\xi|^{2})\big]^{l+1}}\bigg|\\
&\lesssim\frac{\phi(|\xi|^{2})}{|\vartheta|^{\alpha}+\phi(|\xi|^{2})}\big|\xi\big|^{-|\gamma|},
\end{align*}
and for $\gamma=0$, the estimate obviously holds. Hence, for any $\gamma$, we have
$$
\bigg|D_{\xi}^{\gamma}\big[m(\vartheta,\xi)\big]\bigg|\lesssim
\frac{\phi(|\xi|^{2})}{|\vartheta|^{\alpha}+\phi(|\xi|^{2})}\big|\xi\big|^{-|\gamma|}.
$$
Next, let $\hat{\beta}=\big(\beta_{0},\beta_{1},...\beta_{d}\big)=\big(\beta_{0},\beta\big)$ be a $d+1$ dimensional multi-index with $\beta_{0}\neq 0$, $\beta_{i}=0\text{ or }1$ for $i=1,2,...d$. Thus we have
\begin{align}\label{multiplier upper estimate}
\big|D^{\hat{\beta}}m(\vartheta,\xi)\big|&\lesssim
\frac{|\vartheta|^{\beta_{0}(\alpha-1)}}{\big[|\vartheta|^{\alpha}+\phi(|\xi|^{2})\big]^{\beta_{0}}}\big|D^{\beta}_{\xi}m(\vartheta,\xi)\big|
\lesssim\frac{\vartheta^{\beta_{0}(\alpha-1)}\phi(|\xi|^{2})}{\big[|\vartheta|^{\alpha}+\phi(|\xi|^{2})\big]^{\beta_{0}+1}}|\xi|^{-|\gamma|}.
\end{align}
Hence, from \eqref{multiplier upper estimate}, for any $0<l\leq d+1$, it follows that
$$
\bigg|\frac{\partial^{l}m}{\partial_{\vartheta}\partial_{1}\partial_{2}...\partial_{\xi_{l-1}}}\bigg|\lesssim
\frac{|\vartheta|^{(\alpha-1)}\phi(|\xi|^{2})}{\big[|\vartheta|^{\alpha}+\phi(|\xi|^{2})\big]^{2}}|\xi|^{-|l-1|}.
$$
Note that for any $\zeta>0$,
\begin{align*}
\bigg|\int_{\zeta}^{2\zeta}\frac{\vartheta^{\alpha-1}\phi(|\xi|^{2})}{\big[\vartheta^{\alpha}+\phi(|\xi|^{2})\big]^{2}}\,d\vartheta\bigg|
&\leq\frac{1}{\alpha}\bigg|\int_{\zeta^{\alpha}}^{(2\zeta)^{\alpha}}
\frac{\phi(|\xi|^{2})}{\big[\mu+\phi(|\xi|^{2})\big]^{2}}\,d\mu\bigg|\\
&\leq \frac{1}{\alpha}\bigg(\frac{\phi(|\xi|^{2})}{\zeta^{\alpha}+\phi(|\xi|^{2})}-
\frac{\phi(|\xi|^{2})}{(2\zeta)^{\alpha}+\phi(|\xi|^{2})}\bigg)\\
&\leq\frac{1}{\alpha}\frac{\phi(|\xi|^{2})[(2\zeta)^{\alpha}+\phi(|\xi|^{2})]-\phi(|\xi|^{2})[(\zeta)^{\alpha}+\phi(|\xi|^{2})]}
{\phi(|\xi|^{2})[(2\zeta)^{\alpha}+\phi(|\xi|^{2})]}\\
&\leq \frac{1}{\alpha}\frac{2^{\alpha}-1}{2^{\alpha}},
\end{align*}
and
$$
\int_{\zeta}^{2\zeta}|\xi_{j}|^{-1}\,d\xi_{j}\lesssim 1,\text{ for }j=1,2,...d.
$$
Thus we derive that for any $0<l\leq d+1$,
\begin{align}\label{panju}
\sup_{\xi_{l},\xi_{l+1},...\xi_{d}}\int_{\rho}
\bigg|\frac{\partial^{l}m}{\partial_{\vartheta}\partial_{1}\partial_{2}...
\partial_{\xi_{l-1}}}\bigg|\,d\vartheta\,d\xi_{1}\,...\,d\xi_{l-1}\lesssim 1,
\end{align}
as $\rho$ ranges over dyadic rectangles $\prod_{1\leq i\leq l}[2^{k_{i}},2^{k_{i}+1}]$ of $\mathbb{R}^{l}$. (If $l=d+1$, the $\sup$ sign is omitted.) Moreover, it is easy to see that \eqref{panju} is valid for every one of the $(d+1)!$ permutations of the variables $\vartheta,\xi_{1},...,\xi_{d}$. By the Marcinkiewicz multiplier theory, see \cite[Theorem 4.6']{Stein}, $m(\vartheta,\xi)$ is a Marcinkiewicz multiplier on $L_{p}(\mathbb{R}^{d+1})$ for $1<p<\infty$, which implies that for any $h\in C_{c}^{\infty}(\mathbb{R}^{d+1})$
\begin{align}\label{Lpestimate}
\big\|\phi(\Delta)w\big\|_{L_{p}(\mathbb{R};L_{p}(\mathbb{R}^{d}))}\lesssim\big\|h\big\|_{L_{p}(\mathbb{R};L_{p}(\mathbb{R}^{d}))}.
\end{align}
Next, we denote $X=L_{p}(\mathbb{R}^{d})$, and define the operator
$$
\mathcal{T}(t,\tau)h(x)=\int_{\mathbb{R}^{d}}\mathcal{S}_{\alpha,1+\alpha,\phi}(t-\tau,x-y)h(\tau,y)\,dy.
$$
Using Young's inequality and combining with (iii) of Proposition \ref{Pro.soltioun function},
$$
\big\|\mathcal{T}(t,\tau)h\big\|_{X}\lesssim |t-\tau|^{-1}\big\|h(t)\big\|_{X},
$$
this implies the operator $\mathcal{T}$ is uniquely extendible to $X$ for $t\neq \tau$.
Let
$$
I_{j}=[a2^{j},(a+1)2^{j})\text{ and }I_{j}^{*}=\big((a-1)2^{j},(a+2)2^{j}\big),\text{ where }a,j\in\mathbb{Z},
$$
and recall $\mathcal{S}_{\alpha,1+\alpha,\phi}(t-\tau,x-y)=0$ if $t\leq \tau$. For $h\in C^{\infty}_{c}(\mathbb{R};X)$, we define the operator
$$
\mathscr{T}h(t,x)=\int_{\mathbb{R}}\mathcal{T}(t,\tau)h(\tau,x)\,d\tau.
$$
On one hand, \eqref{Lpestimate} implies the operator $\mathscr{T}$ is of strong type $L_{p}(X)$ to $L_{p}(X)$. On the other hand, for $t\in (I_{j}^{*})^{c}$, $s,\tau\in I_{j}$, combining with (iii) of Proposition \ref{Pro.soltioun function} and using Young's inequality, we derive that
\begin{align*}
\left\|\mathcal{T}(t,\tau)-\mathcal{T}(t,s)\right\|_{\mathcal{L}(L_{p})}
&=\sup_{\|h\|_{p}\leq 1}\bigg\|\int_{\mathbb{R}^{d}}
\big\{\mathcal{S}_{1,1+\alpha,\phi}(t-\tau,x-y)-\mathcal{S}_{1,1+\alpha,\phi}(t-s,x-y)\big\}h(y)\,dy\bigg\|_{L_{p}}\\
&\leq\int_{\mathbb{R}^{d}}\big|\tau-s\big|\big|\mathcal{S}_{1,2+\alpha,\phi}(t-\theta(\tau,s),x)\big|\,dx\\
&\lesssim\frac{|s-\tau|}{(t-((a+1)2^{j}))^{2}},
\end{align*}
where $\theta(\tau,s)=\mu\tau+(1-\mu)s\in I_{j}$, $\mu\in(0,1)$, and
\begin{align}\label{verify condition}
&\int_{|t-\tau|>2|s-\tau|}\left\|\mathcal{T}(t,\tau)-\mathcal{T}(t,s)\right\|_{\mathcal{L}(L_{p})}\,dt\notag\\
&\lesssim\int_{\mathbb{R}\backslash I_{j}^{*}}\left\|\mathcal{T}(t,\tau)-\mathcal{T}(t,s)\right\|_{\mathcal{L}(L_{p})}\,dt\notag\\
&\lesssim|I_{j}|\int_{|t|\geq2^{j}}|t|^{-2}\,dt\lesssim 1.
\end{align}
By the Banach-valued Calder\'{o}n-Zygmund decomposition \cite[Proposition 11.2.6]{Weis}, for a fixed $\lambda>0$, we can decompose $h=g+b$, where
$$
\big\|g\big\|_{L_{\infty}(\mathbb{R};X)}\leq 2\lambda,\quad\text{ }\big\|g\big\|_{L_{1}(\mathbb{R};X)}\leq\big\|h\big\|_{L_{1}(\mathbb{R};X)},\quad b=\sum b_{j},
$$
$$
\text{supp}\text{ }b_{j}\subseteq I_{j},\quad\int_{I_{j}}b_{j}=0,\quad\sum_{j}|I_{j}|\leq\frac{1}{\lambda}\|h\|_{L_{1}(\mathbb{R};X)},
\quad\sum_{i}\big\|b_{j}\big\|_{L_{1}(\mathbb{R};X)}\leq 2\big\|h\big\|_{L_{1}(\mathbb{R};X)}
$$
for some disjoint dyadic intervals $I_{j}$. We derive that
\begin{align*}
\left|\big\{t:\|\mathscr{T}h(t)\|_{X}>\lambda\big\}\right|\leq\left|\big\{t:\|\mathscr{T}g(t)\|_{X}>\lambda/2\big\}\right|
+\left|\big\{t:\|\mathscr{T}b(t)\|_{X}>\lambda/2\big\}\right|.
\end{align*}
By Chebyshev's inequality and \eqref{Lpestimate}, we obtain that
\begin{align*}
\left|\big\{t:\|\mathscr{T}g(t)\|_{X}>\lambda/2\big\}\right|\leq\frac{2^{p}}{\lambda^{p}}
\int_{\mathbb{R}}\big\|\mathscr{T}g(t)\big\|_{X}^{p}\,dt
\leq\frac{2^{p}}{\lambda^{p}}\int_{\mathbb{R}}\big\|g(t)\big\|_{X}^{p}\,dt
\leq\frac{2^{2p-1}}{\lambda}\left\|h\right\|_{L_{1}(\mathbb{R};X)},
\end{align*}
\begin{align*}
\left|\big\{t:\|\mathscr{T}b(t)\|_{X}>\lambda/2\big\}\right|&\leq\big|\big\{\bigcup_{j}I^{*}_{j}\big\}\big|+
\big|\big\{t\notin\bigcup_{j}I^{*}_{j}:\|\mathscr{T}b(t)\|_{X}>\lambda/2\big\}\big|\\
&\lesssim\frac{2}{\lambda}\|h\|_{L_{1}(\mathbb{R};X)}+\big|\big\{t\notin\bigcup_{j}I^{*}_{j}:\|\mathscr{T}b(t)\|_{X}>\lambda/2\big\}\big|,
\end{align*}
and
\begin{align*}
\big|\big\{t\notin\bigcup_{j}I^{*}_{j}:\|\mathscr{T}b(t)\|_{X}>\lambda/2\big\}\big|&\leq \frac{2}{\lambda}\sum_{j}\int_{\mathbb{R}\backslash I_{j}^{*}}\|\mathscr{T}b_{j}(t)\|_{X}\,dt\\
&\leq\frac{2}{\lambda}\sum_{j}\int_{\mathbb{R}\backslash I_{j}^{*}}\bigg\|\int_{I_{j}}\mathcal{T}(t,\tau)b_{j}(\tau)\,d\tau\bigg\|_{X}\,dt\\
&=\frac{2}{\lambda}\sum_{j}\int_{\mathbb{R}\backslash I_{j}^{*}}\bigg\|\int_{I_{j}}[\mathcal{T}(t,\tau)-\mathcal{T}(t,c_{j})]b_{j}(\tau)\,d\tau\bigg\|_{X}\,dt\\
&\leq\frac{2}{\lambda}\sum_{j}\int_{\mathbb{R}\backslash I_{j}^{*}}\int_{I_{j}}\big\|[\mathcal{T}(t,\tau)-\mathcal{T}(t,c_{j})]\big\|_{\mathcal{L}(X)}\|b_{j}(\tau)\|_{X}\,d\tau\,dt\\
&\lesssim\frac{2}{\lambda}\sum_{j}\int_{I_{j}}\big\|b_{j}(\tau)\big\|_{X}\,d\tau
\lesssim\frac{4}{\lambda}\big\|f\big\|_{L_{1}(\mathbb{R};X)},
\end{align*}
where we use \eqref{verify condition} and the cancellation condition $\int_{I_{j}}b_{j}=0$,
thus we obtain
$$
\left|\big\{t:\|\mathscr{T}h(t)\|_{X}>\lambda\big\}\right|\lesssim\frac{1}{\lambda}\left\|h\right\|_{L_{1}(\mathbb{R};X)},
$$
this implies that the operator $\mathscr{T}$ maps $L_{1}(\mathbb{R};X)$ to $L_{1,\infty}(\mathbb{R};X)$. By the Marcinkiewicz interpolation theorem and \eqref{Lpestimate}, we obtain that the operator maps $L_{q}(\mathbb{R};X)$ to $L_{q}(\mathbb{R};X)$ for $1<q\leq p$.
Moreover, for the case $1<p<q<\infty$, using \cite[(4.12)]{Kim1} and H\"{o}lder's inequality, we get for any $g\in C^{\infty}_{c}(\mathbb{R};X)$, $1<q'<p'$, it follows
\begin{align*}
\bigg|\int_{\mathbb{R}}\big\langle g(t,\cdot),\mathscr{T}h(t,\cdot)\big\rangle_{(X',X)}\,dt\bigg|&=\bigg|\int_{\mathbb{R}}\big\langle \mathscr{T}\tilde{g}(t,\cdot),\tilde{h}(t,\cdot)\big\rangle_{(X',X)}\,dt\bigg|\\
&\leq\int_{\mathbb{R}}\big\|\mathscr{T}\tilde{g}(t)\big\|_{X'}\big\|\tilde{h}(t)\big\|_{X}\,dt\\
&\leq\big\|\mathscr{T}\tilde{g}\big\|_{L_{q'}(\mathbb{R};X')}\big\|\tilde{h}\big\|_{L_{q}(\mathbb{R};X)}\\
&\leq\big\|g\big\|_{L_{q'}(\mathbb{R};X')}\big\|h\big\|_{L_{q}(\mathbb{R};X)},
\end{align*}
hence, in summary, we obtain \eqref{independent estimate} for any $1<p,q<\infty$.
\end{proof}
Now, for any $(t,x)\in\mathbb{R}^{d+1}$ and constant $\varrho>0$, we denote
\begin{align*}
\lambda(\varrho)=\left(\phi(\varrho^{-2})\right)^{-\frac{1}{\alpha}},\text{ }B_{\varrho}(x)=\left\{z:|x-z|<\varrho\right\},
\end{align*}
and
\begin{align*}
I_{\varrho}(t)=\left(t-\lambda(\varrho),t\right),\text{ }\mathcal{Q}_{\varrho}(t,x)=I_{\varrho}(t)\times B_{\varrho}(x),\text{ }\mathcal{Q}_{\varrho}:=\mathcal{Q}_{\rho}(0,0).
\end{align*}
The sharp maximal function associated with a real-valued measurable function $h(t,x)$ on $\mathbb{R}^{d+1}$, denoted by $h^{\sharp}(t,x)$, is defined by
\[
h^{\sharp}(t,x) := \sup_{(t,x)\in\mathcal{Q}} \fint_{\mathcal{Q}} \left|h(s,y) - (h)_{\mathcal{Q}}\right| \,dy\,ds,
\]
where the supremum is taken over all parabolic cylinders $\mathcal{Q}_\varrho \owns (t,x)$. The spatial maximal operator $\mathcal{M}_x$ acts on $g:\mathbb{R}^d\to\mathbb{R}$ as:
\begin{align*}
\mathcal{M}_{x}g(x)=\sup_{x\in B_{\varrho}(z)}\fint_{B_{\varrho}(z)}\left|g(y)\right|\,dy.
\end{align*}
Moreover, the \emph{maximal function} $\mathcal{M}_{t,x}h(t,x)$ can be defined similarly, and
$$
\mathcal{M}_{t}\mathcal{M}_{x}h(t,x)=\mathcal{M}_{t}\left(\mathcal{M}_{x}h(\cdot,x)\right)(t).
$$
For $h\in C_{c}^{\infty}\left(\mathbb{R}^{d+1}\right)$, $(t,x)\in\mathbb{R}^{d+1}$, $0<T<\infty$, we define the operators $\mathcal{G}_{0}$ and $\mathcal{G}_{1}$:
\begin{align*}
\mathcal{G}_{0}h(t,x)=\int_{-\infty}^{t}\int_{\mathbb{R}^{d}}\textbf{I}_{0<t-\tau<T}\mathcal{S}_{\alpha,1,\phi}(t-\tau,x-y)h(\tau,y)\,dyd\tau,
\end{align*}
and
\begin{align*}
\mathcal{G}_{1}h(t,x)=\int_{-\infty}^{t}\int_{\mathbb{R}^{d}}\mathcal{S}_{\alpha,1+\alpha,\phi}(t-\tau,x-y)h(\tau,y)\,dyd\tau.
\end{align*}
\begin{lemma}\label{supp estimate 1}
Let $T<\infty$, $1<p<\infty$, $\varrho>0$, and $h\in C_{c}^{\infty}\left(\mathbb{R}^{d+1}\right)$ with support contained in $\left(-3\lambda(\varrho),3\lambda(\varrho)\right)\times B_{3\varrho}$. Then for any $(t,x)\in\mathcal{Q}_{\varrho}$, the following estimates hold:
\begin{align}
\label{supp estimate 1.1}
\fint_{\mathcal{Q}_{\varrho}}\left|\mathcal{G}_{0}h(s,z)\right|^{p}\,dzds &\leq C(\alpha,d,p,\delta_{0},T)\mathcal{M}_{t,x}|h|^{p}(t,x),\\
\fint_{\mathcal{Q}_{\varrho}}\left|\mathcal{G}_{1}h(s,z)\right|^{p}\,dzds &\leq C(\alpha,d,p,\delta_{0})\mathcal{M}_{t,x}|h|^{p}(t,x).
\label{supp estimate 1.2}
\end{align}
\end{lemma}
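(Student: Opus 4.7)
The plan is to handle the two bounds \eqref{supp estimate 1.1} and \eqref{supp estimate 1.2} separately, because the kernels $\mathcal{S}_{\alpha,1,\phi}$ and $\mathcal{S}_{\alpha,1+\alpha,\phi}$ have quite different time decay: Proposition \ref{Pro.soltioun function}(iii) gives their spatial $L^{1}$ norms bounded by $t^{\alpha-1}$ and $t^{-1}$ respectively. The first is integrable in $t$ up to a finite horizon $T$, while the second is not, so the two pieces demand different tools. In both cases, the end goal is to reduce matters to bounding $\int|h|^{p}$ over the support of $h$ by the parabolic maximal function value $\mathcal{M}_{t,x}|h|^{p}(t,x)$.

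For \eqref{supp estimate 1.1} I would argue directly via a kernel-weighted H\"{o}lder inequality: using $|\mathcal{S}_{\alpha,1,\phi}|$ as a weight, $|\mathcal{G}_{0}h(s,z)|^{p}$ is dominated by $(CT^{\alpha})^{p-1}$ times $\iint_{0<s-\tau<T}|\mathcal{S}_{\alpha,1,\phi}(s-\tau,z-y)|\,|h(\tau,y)|^{p}\,dy\,d\tau$, where the $T^{\alpha(p-1)}$ factor comes from the change of variables $u=s-\tau$ together with Proposition \ref{Pro.soltioun function}(iii). Averaging the resulting estimate over $\mathcal{Q}_{\varrho}$ and applying Fubini leaves an integral of $|h|^{p}$ weighted by $\int_{\mathcal{Q}_{\varrho}}|\mathcal{S}_{\alpha,1,\phi}(s-\tau,z-y)|\,dz\,ds$, which a routine calculation --- again using the $t^{\alpha-1}$ bound together with the support restriction $|\tau|\leq 3\lambda(\varrho)$, forcing $s-\tau\lesssim \lambda(\varrho)$ --- controls by $C\lambda(\varrho)^{\alpha}$ uniformly in $(\tau,y)$.

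For \eqref{supp estimate 1.2}, the kernel-weighted H\"{o}lder approach breaks down because $\int_{0}^{c\lambda(\varrho)}t^{-1}\,dt=\infty$. Instead I would invoke Lemma \ref{independent estimate} with $p=q$, which after extending by time translation yields the space-time bound $\|\mathcal{G}_{1}h\|_{L^{p}(\mathbb{R}^{d+1})}\leq C\|h\|_{L^{p}(\mathbb{R}^{d+1})}$. Trivially integrating over $\mathcal{Q}_{\varrho}\subset\mathbb{R}^{d+1}$ then gives $\int_{\mathcal{Q}_{\varrho}}|\mathcal{G}_{1}h|^{p}\,dz\,ds\leq C\int_{\mathrm{supp}\,h}|h|^{p}\,d\tau\,dy$, which is of the same form as the output of the previous step.

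Both cases close with the same maximal-function reduction: $\mathrm{supp}\,h\subset(-3\lambda(\varrho),3\lambda(\varrho))\times B_{3\varrho}$ is contained in a parabolic cylinder $\mathcal{Q}_{c\varrho}(t_{0},x_{0})$ that also contains the reference point $(t,x)\in\mathcal{Q}_{\varrho}$, with $c$ determined by the lower scaling condition \eqref{lower scailing condition} (this is precisely what makes $\lambda(c\varrho)\gtrsim 6\lambda(\varrho)$ achievable with a universal $c$), and $|\mathcal{Q}_{c\varrho}|\simeq|\mathcal{Q}_{\varrho}|$. The definition of $\mathcal{M}_{t,x}$ then yields $\int_{\mathrm{supp}\,h}|h|^{p}\leq C|\mathcal{Q}_{\varrho}|\,\mathcal{M}_{t,x}|h|^{p}(t,x)$; dividing by $|\mathcal{Q}_{\varrho}|$ finishes both inequalities, with the constant for \eqref{supp estimate 1.1} depending on $T$ through the factor $T^{\alpha(p-1)}$ and the one for \eqref{supp estimate 1.2} depending only on $\alpha,d,p,\delta_{0}$. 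The principal obstacle is \eqref{supp estimate 1.2}: the singular $t^{-1}$ behaviour of the kernel's spatial $L^{1}$ norm rules out any direct size-based bound, and the argument must instead exploit the cancellation captured by Lemma \ref{independent estimate}.
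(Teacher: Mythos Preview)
Your overall strategy matches the paper's: for $\mathcal{G}_{1}$ both you and the paper invoke Lemma~\ref{independent estimate} with $p=q$ after a time translation to obtain the full $L^{p}(\mathbb{R}^{d+1})$ bound, then feed the support hypothesis into the parabolic maximal function. For $\mathcal{G}_{0}$ the paper applies Minkowski's integral inequality where you use a kernel-weighted H\"older (Jensen) inequality; these are interchangeable devices and both rest on Proposition~\ref{Pro.soltioun function}(iii).

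There is, however, a real slip in your $\mathcal{G}_{0}$ estimate. After Fubini you bound the inner kernel integral
\[
\int_{\mathcal{Q}_{\varrho}}\bigl|\mathcal{S}_{\alpha,1,\phi}(s-\tau,z-y)\bigr|\,\mathbf{1}_{0<s-\tau<T}\,dz\,ds
\]
by $C\lambda(\varrho)^{\alpha}$, using only that $|\tau|\le 3\lambda(\varrho)$ forces $s-\tau\lesssim\lambda(\varrho)$. Carrying this through yields
\[
\fint_{\mathcal{Q}_{\varrho}}|\mathcal{G}_{0}h|^{p}\,dz\,ds
\;\le\; C\,T^{\alpha(p-1)}\,\lambda(\varrho)^{\alpha}\,\mathcal{M}_{t,x}|h|^{p}(t,x),
\]
and since $\lambda(\varrho)=\bigl(\phi(\varrho^{-2})\bigr)^{-1/\alpha}\to\infty$ as $\varrho\to 0^{+}$, this is not uniform in $\varrho$ and the lemma does not follow. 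The fix is immediate: the indicator $\mathbf{1}_{0<s-\tau<T}$ built into $\mathcal{G}_{0}$ also forces $s-\tau\le T$, so the same inner integral is bounded by $C\min(\lambda(\varrho),T)^{\alpha}\le CT^{\alpha}$. Using this in place of $\lambda(\varrho)^{\alpha}$ closes the argument with total $T$-dependence $T^{\alpha p}$, exactly the factor the paper's Minkowski computation produces through $J^{p}$ with $J\le CT^{\alpha}$.
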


\begin{proof}
By the Minkowski inequality, we have
\begin{align*}
&\fint_{\mathcal{Q}_{\varrho}}\left|\mathcal{G}_{0}h(s,z)\right|^{p}\,dzds\\
&\leq C(d)\varrho^{-d}\lambda(\varrho)^{-1}
\left(\int_{0}^{T}\int_{\mathbb{R}^{d}}\left(\int_{\mathcal{Q}_{\varrho}}
\left|\mathcal{S}_{\alpha,1,\phi}(\tau,y)h(s-\tau,z-y)\right|^{p}\,dzds\right)^{\frac{1}{p}}\,dyd\tau\right)^{p}\\
&\leq C(d)\varrho^{-d}\lambda(\varrho)^{-1}J^{p}\left(\int_{\mathbb{R}^{d+1}}\left|h(s,z)\right|^{p}\,dzds\right),
\end{align*}
where
\[
J = \int_{0}^{T}\int_{\mathbb{R}^{d}}\left|\mathcal{S}_{\alpha,1,\phi}(\tau,y)\right|\,dyd\tau \leq C(\alpha,d,p,\delta_{0})\int_{0}^{T}\tau^{\alpha-1}\,d\tau \leq C(\alpha,d,p,\delta_{0},T),
\]
and we have used Proposition \ref{Pro.soltioun function}. Consequently, we obtain
\begin{align*}
\fint_{\mathcal{Q}_{\varrho}}\left|\mathcal{G}_{0}h(s,z)\right|^{p}\,dzds &\leq
C(\alpha,d,p,\delta_{0},T)\varrho^{-d}\lambda(\varrho)^{-1}\left(\int_{\mathbb{R}^{d+1}}\left|h(s,z)\right|^{p}\,dzds\right)\\
&\leq C(\alpha,d,p,\delta_{0},T)\fint_{\left(-3\lambda(\varrho),3\lambda(\varrho)\right)\times B_{3\varrho}}\left|h(s,z)\right|^{p}\,dzds\\
&\leq C(\alpha,d,p,\delta_{0},T)\mathcal{M}_{t,x}\left|h\right|^{p}(t,x).
\end{align*}

Applying Lemma \ref{independent estimate} with $p=q$, we derive
\begin{align*}
\fint_{\mathcal{Q}_{\varrho}}\left|\mathcal{G}_{1}h(s,z)\right|^{p}\,dzds
&\leq C(d)\lambda(\varrho)^{-1}\varrho^{-d}\int_{0}^{\infty}\int_{\mathbb{R}^{d}}\left|\mathcal{G}_{1}h(s-3\lambda(\varrho),z)\right|^{p}\,dzds\\
&\leq C(\alpha,d,p,\delta_{0})\lambda(\varrho)^{-1}\varrho^{-d}\int_{0}^{\infty}
\int_{\mathbb{R}^{d}}\left|h(s-3\lambda(\varrho),z)\right|^{p}\,dzds\\
&\leq C(\alpha,d,p,\delta_{0})\fint_{\left(-3\lambda(\varrho),3\lambda(\varrho)\right)\times B_{3\varrho}}\left|h(s,z)\right|^{p}\,dzds\\
&\leq C(\alpha,d,p,\delta_{0})\mathcal{M}_{t,x}\left|h\right|^{p}(t,x).
\end{align*}
\end{proof}
\begin{lemma}\label{supp estimate 2}
With $T<\infty$, $p\in(1,\infty)$, $\varrho>0$, take $h\in C_c^\infty(\mathbb{R}^{d+1})$ having support in $(-3\lambda(\varrho),\infty)\times\mathbb{R}^d$. Then for any $(t,x)\in\mathcal{Q}_{\varrho}$, the following estimates hold:
\begin{align}
\label{supp estimate 2.1}
\fint_{\mathcal{Q}_{\varrho}}\left|\mathcal{G}_{0}h(s,z)\right|^{p}\,dzds &\leq C(\alpha,d,p,\delta_{0},T)\mathcal{M}_{t,x}|h|^{p}(t,x),\\
\fint_{\mathcal{Q}_{\varrho}}\left|\mathcal{G}_{1}h(s,z)\right|^{p}\,dzds &\leq C(\alpha,d,p,\delta_{0})\mathcal{M}_{t,x}|h|^{p}(t,x).
\label{supp estimate 2.2}
\end{align}
\end{lemma}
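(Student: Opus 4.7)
The plan is to split $h = h_1 + h_2$ with $h_1(\tau,y):=h(\tau,y)\chi_{B_{3\varrho}(0)}(y)$ and $h_2:=h-h_1$, then treat the two pieces by different techniques. For the ``near'' piece $h_1$, observe that for $(s,z)\in\mathcal{Q}_\varrho$ we have $s<0$, and both $\mathcal{G}_0h_1(s,z)$ and $\mathcal{G}_1h_1(s,z)$ only involve $\tau<s<0$; therefore replacing $h_1$ by $\tilde{h}_1:=h_1\cdot\chi_{(-3\lambda(\varrho),3\lambda(\varrho))}(\tau)$ does not alter $\mathcal{G}_ih_1$ on $\mathcal{Q}_\varrho$. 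Since $\tilde{h}_1$ is supported in $(-3\lambda(\varrho),3\lambda(\varrho))\times B_{3\varrho}$, Lemma~\ref{supp estimate 1} applies directly and yields the required bound for this part.

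For the ``far'' piece $h_2$, I would perform a dyadic annular decomposition $A_j:=\{2^j\cdot 3\varrho\leq|y|<2^{j+1}\cdot 3\varrho\}$ for $j\geq 0$, writing $h_2=\sum_{j\geq 0}h_2^j$ with $h_2^j:=h\chi_{A_j}$. For $(s,z)\in\mathcal{Q}_\varrho$ and $y\in A_j$ we have $|z-y|\approx 2^j\varrho$, so Proposition~\ref{Pro.soltioun function}(ii) gives the pointwise kernel bound $|\mathcal{S}_{\alpha,\gamma,\phi}(s-\tau,z-y)|\leq C(s-\tau)^{2\alpha-\gamma}\phi((2^j\varrho)^{-2})(2^j\varrho)^{-d}$. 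Jensen's inequality on the spatial integral over $A_j$, combined with a dyadic-in-$\tau$ argument paired with the one-dimensional Hardy--Littlewood maximal operator $\mathcal{M}_t$, yields the pointwise estimate
\[
|\mathcal{G}_ih_2^j(s,z)|\leq C\,c_i(T)\,2^{-2j\delta_0}\,\mathcal{M}_tF^j(s),\qquad F^j(\tau):=\Bigl(\fint_{A_j}|h(\tau,y)|^p\,dy\Bigr)^{1/p},
\]
where $c_i(T)$ equals $1$ for $\mathcal{G}_1$ and $T^\alpha$ for $\mathcal{G}_0$ (absorbing the truncation $\mathbf{I}_{0<t-\tau<T}$). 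The geometric factor $2^{-2j\delta_0}$ stems from combining $\phi((2^j\varrho)^{-2})\leq C2^{-2j\delta_0}\phi(\varrho^{-2})$ from \eqref{lower scailing condition} with the natural time scale $\lambda(\varrho)$ of the cylinder.

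The final step is to bound $\fint_{I_\varrho(0)}\mathcal{M}_t(F^j)^p(s)\,ds\leq C\mathcal{M}_{t,x}|h|^p(t,x)$: for any interval $I\ni s$ with $s\in I_\varrho(0)$, the product $I\times A_j$ can be enclosed in a parabolic cylinder $\mathcal{Q}'\ni(t,x)$ of comparable volume, an enclosure made possible by $\lambda(r)\gtrsim (r/\varrho)^{2\delta_0/\alpha}\lambda(\varrho)$ for $r\geq\varrho$ (a direct consequence of the lower scaling). Then $\fint_I(F^j)^p\,d\tau=\fint_{I\times A_j}|h|^p\leq C\fint_{\mathcal{Q}'}|h|^p\leq C\mathcal{M}_{t,x}|h|^p(t,x)$ uniformly in $j$, and summing the Minkowski estimates via $\sum_j 2^{-2j\delta_0}<\infty$ completes the argument. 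I expect the principal technical obstacle to be this enclosure step---verifying that a product rectangle of arbitrary time/space aspect ratio fits inside a parabolic cylinder of comparable size that still captures $(t,x)$---which requires a careful case split (depending on whether $|I|$ sits above or below the natural parabolic scale $\lambda(|A_j|^{1/d})$) and relies crucially on the lower scaling of $\phi$.
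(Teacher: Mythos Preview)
Your treatment of the near piece $h_1$ matches the paper exactly, and the dyadic annular decomposition of $h_2$ is a legitimate substitute for the paper's continuous integration-by-parts formula~\eqref{part intefration formula}. The pointwise bound
\[
|\mathcal{G}_i h_2^j(s,z)|\;\lesssim\;\phi((2^j\varrho)^{-2})\int_{-3\lambda(\varrho)}^{s}(s-\tau)^{\alpha-1}F^j(\tau)\,d\tau
\]
is correct, and so is the subsequent dyadic-in-$\tau$ estimate yielding the factor $2^{-2j\delta_0}\mathcal{M}_tF^j(s)$.

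The gap is in the final step. Your enclosure claim---that for every interval $I\ni s$ with $s\in I_\varrho(0)$ one can place $I\times A_j$ inside a parabolic cylinder $\mathcal{Q}'\ni(t,x)$ of \emph{comparable} volume---fails when $|I|\ll\lambda(\varrho)$. Indeed, any $\mathcal{Q}'$ containing both $I\times A_j$ and the fixed point $(t,x)$ must have spatial radius $\gtrsim R_j$ and time-length at least $|t-s|$, which may be of order $\lambda(\varrho)$; hence $|\mathcal{Q}'|\gtrsim \lambda(\varrho)R_j^d$, while $|I\times A_j|\approx|I|R_j^d$. The ratio $\lambda(\varrho)/|I|$ is unbounded, so $\fint_{I\times A_j}|h|^p$ is \emph{not} controlled by $\mathcal{M}_{t,x}|h|^p(t,x)$ uniformly. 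Your proposed case split at $|I|\sim\lambda(R_j)$ does not help, because the obstruction already appears at the smaller scale $|I|\ll\lambda(\varrho)\le\lambda(R_j)$, and the lower scaling of $\phi$ is irrelevant to this purely metric issue (it concerns the location of $t$, not the shape of $\mathcal{Q}'$).

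The remedy---and this is precisely what the paper does---is to avoid introducing $\mathcal{M}_t$ at all. Apply H\"older's inequality to the $\tau$-integral with respect to the \emph{weighted} measure $(s-\tau)^{\alpha-1}\,d\tau$ (whose total mass on $(-3\lambda(\varrho),s)$ is $\lesssim\lambda(\varrho)^\alpha$), obtaining
\[
\Bigl|\int_{-3\lambda(\varrho)}^{s}(s-\tau)^{\alpha-1}F^j(\tau)\,d\tau\Bigr|^{p}
\;\lesssim\;\lambda(\varrho)^{\alpha(p-1)}\int_{-3\lambda(\varrho)}^{s}(s-\tau)^{\alpha-1}\bigl(F^j(\tau)\bigr)^{p}\,d\tau.
\]
After averaging over $(s,z)\in\mathcal{Q}_\varrho$ and using Fubini, the remaining $s$-integral of $(s-\tau)^{\alpha-1}$ contributes another $\lambda(\varrho)^\alpha$, and one is left with $\fint_{(-3\lambda(\varrho),0)}(F^j)^p\,d\tau=\fint_{(-3\lambda(\varrho),0)\times A_j}|h|^p$. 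This is a \emph{single fixed} rectangle that already contains $t$ (since $t\in(-\lambda(\varrho),0)$), so it is dominated by $\mathcal{M}_{t,x}|h|^p(t,x)$ with a constant depending only on $d$. The geometric sum in $j$ then closes exactly as you indicated.
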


\begin{proof}
Let $\xi\in C_{c}^{\infty}(\mathbb{R})$ be a cutoff function satisfying $\xi(t)=1$ for $|t|\leq 2\lambda(\varrho)$ and $\xi(t)=0$ for $|t|\geq 5\lambda(\varrho)/2$. From the definition of $\mathcal{G}_{k}$, we observe that $\mathcal{G}_{k}h=\mathcal{G}_{k}(\xi h)$ on $\mathcal{Q}_{\varrho}$. Without loss of generality, we may assume $h(t,x)=0$ for $|t|\geq 3\lambda(\varrho)$.

Let $\zeta\in C_{c}^{\infty}(\mathbb{R}^{d})$ be another cutoff function with $\zeta(x)=1$ on $B_{2\varrho}$ and $\zeta(x)=0$ outside $B_{5\varrho/2}$. Since $|h\zeta|\leq |h|$, we have the decomposition
$
\left|\mathcal{G}_{k}h\right|\leq \left|\mathcal{G}_{k}(\zeta h)\right|+\left|\mathcal{G}_{k}((1-\zeta)h)\right|.
$
The estimate for $\mathcal{G}_{k}(\zeta h)$ follows directly from Lemma \ref{supp estimate 1}. Therefore, we may further assume $h(t,x)=0$ for $x\in B_{2\varrho}$. For $(s,z)\in\mathcal{Q}_{\varrho}$, we consider two cases:
\begin{itemize}
\item If $\kappa<\varrho$, then $|z-y|<2\varrho$ for $y\in B_{\kappa}$, which implies $h(\tau,z-y)=0$.
\item If $\kappa\geq \varrho$, we have the inclusions
\[
|x-z|\leq 2\varrho,\quad B_{\varrho}(z)\subset B_{2\varrho+\kappa}(x)\subset B_{3\kappa}(x).
\]
\end{itemize}
Applying Proposition \ref{Pro.soltioun function} and the integration by parts formula \eqref{part intefration formula}, we obtain
\begin{align*}
&\fint_{\mathcal{Q}_{\varrho}}\left|\mathcal{G}_{1}h(s,z)\right|^{p}\,dzds\\
&=\fint_{\mathcal{Q}_{\varrho}}\left|\int_{-3\lambda(\varrho)}^{s}
\int_{\mathbb{R}^{d}}\mathcal{S}_{\alpha,1+\alpha,\phi}(s-\tau,y)h(\tau,z-y)\,dyd\tau\right|^{p}\,dzds\\
&\lesssim\fint_{\mathcal{Q}_{\varrho}}\left|\int_{-3\lambda(\varrho)}^{s}
|s-\tau|^{\alpha-1}\int_{|y|\geq \varrho}\frac{\phi(|y|^{-2})}{|y|^{d}}h(\tau,z-y)\,dyd\tau\right|^{p}\,dzds\\
&\lesssim\fint_{\mathcal{Q}_{\varrho}}\left|\int_{-3\lambda(\varrho)}^{s}|s-\tau|^{\alpha-1}
\int_{\varrho}^{\infty}\frac{d}{d\kappa}\left(\frac{\phi(\kappa^{-2})}{\kappa^{d}}\right)\int_{B_{3\kappa}(x)}h(\tau,y)\,dyd\kappa d\tau\right|^{p}\,dzds.
\end{align*}

From \eqref{B.S.T,func.bdd}, we derive that
\begin{align}\label{derivative relation 1}
\left|\frac{d}{d\kappa}\left(\frac{\phi(\kappa^{-2})}{\kappa^{d}}\right)\right|
=\left|\frac{-2\phi'(\kappa^{-2})\kappa^{d-3}-d\kappa^{d-1}\phi(\kappa^{-2})}{\kappa^{2d}}\right|\lesssim \frac{\phi(\kappa^{-2})}{\kappa^{d+1}}.
\end{align}

Combining \eqref{derivative relation 1} with H\"{o}lder's inequality yields
\begin{align*}
&\fint_{\mathcal{Q}_{\varrho}}\left|\mathcal{G}_{1}h(s,z)\right|^{p}\,dzds\\
&\lesssim\fint_{\mathcal{Q}_{\varrho}}\left|\int_{B_{3\kappa}(x)}\int_{-3\lambda(\varrho)}^{s}
\int_{\varrho}^{\infty}
\left(|s-\tau|^{\alpha-1}\frac{\phi(\kappa^{-2})}{\kappa^{d+1}}\right)^{\frac{1}{p}+\frac{1}{p'}}h(\tau,y)\,d\kappa d\tau dy\right|^{p}\,dzds\\
&\lesssim\fint_{\mathcal{Q}_{\varrho}}J(s,\varrho)^{p-1}\int_{-3\lambda(\varrho)}^{s}
|s-\tau|^{\alpha-1}\int_{\varrho}^{\infty}\frac{\phi(\kappa^{-2})}{\kappa^{d+1}}\int_{B_{3\kappa}(x)}
|h(\tau,y)|^{p}\,dyd\kappa d\tau dzds,
\end{align*}
where
\[
J(s,\varrho)=\int_{-3\lambda(\varrho)}^{s}|s-\tau|^{\alpha-1}\int_{\varrho}^{\infty}\frac{\phi(\kappa^{-2})}{\kappa}\,d\kappa d\tau.
\]

Using the change of variables and \eqref{Pro.Convergence}, we estimate
\[
|J(s,\varrho)|\leq\int_{0}^{3\lambda(\varrho)}|\tau|^{\alpha-1}\,d\tau
\int_{\varrho}^{\infty}\frac{\phi(\kappa^{-2})}{\kappa}\,d\kappa\lesssim \lambda(\varrho)^{\alpha}\phi(\varrho^{-2})\leq C(\alpha,d,p,\delta_{0}).
\]

By Fubini's theorem, we consequently obtain
\begin{align*}
&\fint_{\mathcal{Q}_{\varrho}}\left|\mathcal{G}_{1}h(s,z)\right|^{p}\,dzds\\
&\lesssim\lambda(\varrho)^{-1}\fint_{B_{\varrho}}\int_{-\lambda(\varrho)}^{0}\int_{-3\lambda(\varrho)}^{s}
|s-\tau|^{\alpha-1}\int_{\varrho}^{\infty}\frac{\phi(\kappa^{-2})}{\kappa^{d+1}}\int_{B_{3\kappa}(x)}
|h(\tau,y)|^{p}\,dyd\kappa d\tau dzds\\
&\lesssim\lambda(\varrho)^{-1}\fint_{B_{\varrho}}\int_{\varrho}^{\infty}\frac{\phi(\kappa^{-2})}{\kappa}
\int_{-3\lambda(\varrho)}^{0}\int_{\tau}^{0}|s-\tau|^{\alpha-1}\,ds \frac{1}{\kappa^{d}}\int_{B_{3\kappa}(x)}|h(\tau,y)|^{p}\,dyd\tau d\kappa dz\\
&\lesssim(\lambda(\varrho))^{\alpha}\fint_{B_{\varrho}}\int_{\varrho}^{\infty}
\frac{\phi(\kappa^{-2})}{\kappa}\left(\lambda(\varrho)^{-1}\kappa^{-d}\int_{-3\lambda(\varrho)}^{0}
\int_{B_{3\kappa}(x)}|h(\tau,y)|^{p}\,dyd\tau\right)\,d\kappa dz\\
&\leq C(\alpha,d,p,\delta_{0})(\lambda(\varrho))^{\alpha}\phi(\varrho^{-2})\mathcal{M}_{t,x}|h|^{p}(t,x)\\
&\leq C(\alpha,d,p,\delta_{0})\mathcal{M}_{t,x}|h|^{p}(t,x).
\end{align*}

For the $\mathcal{G}_{0}$ estimate, we proceed similarly:
\begin{align*}
&\fint_{\mathcal{Q}_{\varrho}}\left|\mathcal{G}_{0}h(s,y)\right|^{p}\,dzds\\
&=\fint_{\mathcal{Q}_{\varrho}}\left|\int_{-3\lambda(\varrho)}^{s}
\int_{\mathbb{R}^{d}}\mathbf{1}_{0<s-\tau<T}\mathcal{S}_{\alpha,1,\phi}(s-\tau,y)h(\tau,z-y)\,dyd\tau\right|^{p}\,dzdr\\
&\lesssim\fint_{\mathcal{Q}_{\varrho}}\left|\int_{-3\lambda(\varrho)}^{s}
\mathbf{1}_{0<s-\tau<T}|s-\tau|^{2\alpha-1}\int_{|y|\geq \varrho}\frac{\phi(|y|^{-2})}{|y|^{d}}h(\tau,z-y)\,dyd\tau\right|^{p}\,dzds\\
&\lesssim T^{\alpha}\fint_{\mathcal{Q}_{\varrho}}\left|\int_{-3\lambda(\varrho)}^{s}|s-\tau|^{\alpha-1}
\int_{\varrho}^{\infty}\frac{d}{d\kappa}\left(\frac{\phi(\kappa^{-2})}{\kappa^{d}}\right)\int_{B_{3\kappa}(x)}h(\tau,y)\,dyd\kappa d\tau\right|^{p}\,dzds.
\end{align*}

Repeating the argument used for $\mathcal{G}_{1}$ yields the final estimate \eqref{supp estimate 2.1}
\end{proof}

\begin{lemma}\label{supp estimate 3}
Let $T<\infty$, $1<p<\infty$, $\varrho>0$, and $h\in C_{c}^{\infty}\left(\mathbb{R}^{d+1}\right)$ with support contained in $\left(-\infty,-2\lambda(\varrho)\right)\times B_{3\varrho}$. Then for any $(t,x)\in\mathcal{Q}_{\varrho}$, the following estimates hold:
\begin{align}
\label{supp estimate 3.1}
\fint_{\mathcal{Q}_{\varrho}}\left|\mathcal{G}_{0}h(s,z)\right|^{p}\,dzds &\leq C(\alpha,d,p,\delta_{0},T)\mathcal{M}_{t,x}|h|^{p}(t,x),\\
\fint_{\mathcal{Q}_{\varrho}}\left|\mathcal{G}_{1}h(s,z)\right|^{p}\,dzds &\leq C(\alpha,d,p,\delta_{0})\mathcal{M}_{t,x}|h|^{p}(t,x).
\label{supp estimate 3.2}
\end{align}
\end{lemma}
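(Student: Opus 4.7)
The plan is to adapt the proof of Lemma \ref{supp estimate 2} with the roles of time and space interchanged. For any $(s, z) \in \mathcal{Q}_\varrho$ and $\tau < -2\lambda(\varrho)$ in the support of $h$, the temporal separation $s - \tau > \lambda(\varrho)$ now plays the role the spatial separation $|y| \geq \varrho$ did in Lemma \ref{supp estimate 2}, while the bounded spatial support $h(\tau, \cdot) \subset B_{3\varrho}$ is analogous to the bounded temporal support used in Lemma \ref{supp estimate 1}.

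First, I would apply the pointwise kernel bound of Proposition \ref{Pro.soltioun function}(ii) to write
\begin{equation*}
|\mathcal{G}_1 h(s, z)| \leq C \int_{-\infty}^{-2\lambda(\varrho)} (s-\tau)^{\alpha-1} \int_{B_{3\varrho}(z)} \frac{\phi(|y|^{-2})}{|y|^d} |h(\tau, z-y)| \, dy \, d\tau,
\end{equation*}
with the $\mathcal{G}_0$ bound being analogous but involving $(s-\tau)^{2\alpha-1}$, the extra $(s-\tau)^\alpha$ being absorbed into the prefactor $T^\alpha$ via the cutoff $\mathbf{1}_{0 < s-\tau < T}$. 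Next, I would apply the integration-by-parts formula \eqref{part intefration formula} in the spatial radial variable $\kappa = |y|$ from $0$ to $\infty$. Since $h(\tau, \cdot) \subset B_{3\varrho}$, the resulting running integral $\int_{|y| \leq \kappa} |h(\tau, z-y)| \, dy$ is contained in $B_{3\kappa}(x)$ for any $(t, x) \in \mathcal{Q}_\varrho$, exactly as in Lemma \ref{supp estimate 2}. Combined with the derivative bound \eqref{derivative relation 1}, this yields
\begin{equation*}
|\mathcal{G}_1 h(s, z)| \lesssim \int_{-\infty}^{-2\lambda(\varrho)} (s - \tau)^{\alpha-1} \int_0^\infty \frac{\phi(\kappa^{-2})}{\kappa^{d+1}} \int_{B_{3\kappa}(x)} |h(\tau, y)| \, dy \, d\kappa \, d\tau.
\end{equation*}

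Applying Hölder with the positive measure $d\mu = (s-\tau)^{\alpha-1} \phi(\kappa^{-2})/\kappa^{d+1}\,\mathbf{1}_{B_{3\kappa}(x)}(y) \, d\tau \, d\kappa \, dy$ inside the triple integral, raising to the $p$-th power, averaging over $\mathcal{Q}_\varrho$, and applying Fubini, I would reduce matters to controlling the local averages $\lambda(\varrho)^{-1}\kappa^{-d}\int_{\text{window}}\int_{B_{3\kappa}(x)}|h|^p$ across the dyadic time windows in the support of $h$ by $\mathcal{M}_{t,x}|h|^p(t,x)$ via parabolic cylinders containing $(t,x)$ whose scales are matched to each window. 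The total mass of $d\mu$ is then bounded by combining \eqref{Pro.Convergence} for the $\kappa$-integration with the time decomposition, producing the scale factor $\lambda(\varrho)^\alpha \phi(\varrho^{-2}) = 1$ which cancels cleanly.

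The main obstacle is that on its own the temporal integral $\int_{-\infty}^{-2\lambda(\varrho)} (s - \tau)^{\alpha - 1} \, d\tau$ diverges, so the weight $d\mu$ is not a priori of finite mass. The resolution is to use the refined bound in Proposition \ref{Pro.soltioun function}(ii), which is valid throughout the integration region since $(s-\tau)^\alpha \phi(|y|^{-2}) \geq 1$ whenever $s-\tau \geq \lambda(\varrho)$ and $|y| \lesssim \varrho$. This bound produces the improved decay $\int_{B_{3\varrho}}|\mathcal{S}_{\alpha,1+\alpha,\phi}(t,y)|\,dy \lesssim t^{-1-\alpha}\lambda(\varrho)^\alpha$ for $t \geq \lambda(\varrho)$, replacing the crude $(s-\tau)^{\alpha-1}$ behavior. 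Paired with the dyadic-in-time decomposition at scales $2^k \lambda(\varrho)$ and appropriately matched parabolic cylinders, this yields a convergent geometric series bounded by $\mathcal{M}_{t, x}|h|^p(t, x)$, completing both \eqref{supp estimate 3.1} and \eqref{supp estimate 3.2}.
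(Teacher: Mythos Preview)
Your opening strategy has a more basic problem than the one you flag. The crude bound $|\mathcal{S}_{\alpha,1+\alpha,\phi}(t,y)|\lesssim t^{\alpha-1}\phi(|y|^{-2})/|y|^d$ is \emph{not} locally integrable near $y=0$ (by the lower scaling condition, $\int_0^\varepsilon \phi(r^{-2})r^{-1}\,dr=\infty$), so your first displayed inequality is vacuous: the right-hand side is infinite. In Lemma~\ref{supp estimate 2} the spatial cutoff forced $|y|\ge\varrho$, which is exactly what made the crude bound usable there; here the spatial support of $h$ forces $|y|\le 4\varrho$, and you must integrate through the origin. This is why the paper never touches the crude bound in this lemma and goes directly to the refined estimate in Proposition~\ref{Pro.soltioun function}(ii). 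Your identification of the obstacle as ``the temporal integral $\int(s-\tau)^{\alpha-1}\,d\tau$ diverges'' is therefore misdiagnosed, though your proposed remedy (switch to the refined bound) happens to be the right move.

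Two further technical points. First, the refined bound requires $(s-\tau)^\alpha\phi(|y|^{-2})\ge 1$; for $s-\tau$ barely above $\lambda(\varrho)$ and $|y|$ near $4\varrho$ this fails, so a short transition zone $\tau\in(-k\lambda(\varrho),-2\lambda(\varrho))$ must be split off and handled with the $L^1$ bound $\int_{\mathbb{R}^d}|\mathcal{S}_{\alpha,1+\alpha,\phi}(\tau,\cdot)|\lesssim\tau^{-1}$ (this is the paper's $J_2$). Second, your asserted uniform decay $\int_{B_{3\varrho}}|\mathcal{S}_{\alpha,1+\alpha,\phi}(t,y)|\,dy\lesssim t^{-1-\alpha}\lambda(\varrho)^\alpha$ is only the leading term: the actual bound picks up dimension-dependent corrections (a term $t^{-\alpha d/2-1}\lambda(\varrho)^{\alpha d/2}$ for $d=1$, a logarithm for $d=2$), which is why the paper separates the contributions into $J_{11},J_{12},J_{13}$.

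With those repairs your dyadic-in-time scheme does work and is a legitimate alternative to the paper's method. The paper instead applies Minkowski and H\"older in $(\tau,y)$ with weight $|\mathcal{S}_{\alpha,1+\alpha,\phi}(s-\tau,y)|$ to extract a factor $J(\varrho,s)^{p-1}$, bounds $J$ by a constant using the refined kernel estimate, and then --- rather than a dyadic split --- integrates by parts in $\tau$ against $\frac{d}{d\tau}\int_\tau^0\int_{B_{5\varrho}}|h|^p$ to produce the maximal function. Your dyadic route is conceptually lighter and avoids the $J_{11}$--$J_{13}$ case analysis, at the cost of slightly less explicit constants; the paper's integration-by-parts route is more mechanical but tracks the dimension dependence cleanly.
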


\begin{proof}
Note that for $z\in B_{\varrho}$, $\left|y-z\right|\geq \left|y\right|-\left|z\right|$, we derive $h(r,y-z)=0$ for $y\in B_{4\varrho}^{c}$. Using the Minkowski inequality and H\"{o}lder's inequality, and noting that $\left|y-z\right|\leq 5\varrho$, we obtain
\begin{align*}
&\fint_{\mathcal{Q}_{\varrho}}\left|\mathcal{G}_{1}h(s,z)\right|^{p}\,dzds\\
&=\fint_{\mathcal{Q}_{\varrho}}
\left|\int_{-\infty}^{s}\int_{\mathbb{R}^{d}}\mathcal{S}_{\alpha,1+\alpha,\phi}(s-\tau,y)h(\tau,z-y)\,dyd\tau\right|^{p}\,dzds\\
&=\fint_{\mathcal{Q}_{\varrho}}\left|\int_{-\infty}^{s}\int_{B_{4\varrho}}
\mathcal{S}_{\alpha,1+\alpha,\phi}(s-\tau,y)h(\tau,z-y)\,dyd\tau\right|^{p}\,dzds\\
&\lesssim\lambda(\varrho)^{-1}\varrho^{-d}\int_{-\lambda(\varrho)}^{0}
\left(\int_{-\infty}^{-2\lambda(\varrho)}\int_{B_{4\varrho}}\left|\mathcal{S}_{\alpha,1+\alpha,\phi}(s-\tau,y)\right|
\left(\int_{B_{5\varrho}}\left|h(\tau,z)\right|^{p}\,dz\right)^{\frac{1}{p}}\,dyd\tau\right)^{p}ds\\
&\lesssim\lambda(\varrho)^{-1}\varrho^{-d}\int_{-\lambda(\varrho)}^{0}J(\varrho,s)^{p-1}\int_{\infty}^{-2\lambda(\varrho)}
\int_{B_{4\varrho}}\left|\mathcal{S}_{\alpha,1+\alpha,\phi}(s-\tau,y)\right|
\left(\int_{B_{5\varrho}}\left|h(\tau,z)\right|^{p}\,dz\right)dyd\tau ds,
\end{align*}
where
\begin{align*}
J(\varrho,s)=\int_{-\infty}^{-2\lambda(\varrho)}\int_{B_{4\varrho}}\left|\mathcal{S}_{\alpha,1+\alpha,\phi}(s-\tau,y)\right|\,dyd\tau.
\end{align*}
Observing that $\left(s-\tau\right)\in \left(\lambda(\varrho),\infty\right)$, we get
\begin{align*}
J(\varrho,s)&\lesssim
\int_{\lambda(\varrho)}^{\infty}\int_{B_{4\varrho}}\left|\mathcal{S}_{\alpha,1+\alpha,\phi}(\tau,y)\right|\,dyd\tau\\
&\lesssim\int_{\lambda(\varrho)}^{\lambda(4\varrho)}\int_{B_{4\varrho}}
\left|\mathcal{S}_{\alpha,1+\alpha,\phi}(\tau,y)\right|\,dyd\tau
+\int_{\lambda(4\varrho)}^{\infty}\int_{B_{4\varrho}}\left|\mathcal{S}_{\alpha,1+\alpha,\phi}(\tau,y)\right|\,dyd\tau.
\end{align*}
Since $\tau>\lambda(4\varrho)$, we derive $\tau^{\alpha}\phi(\varrho^{-2})\geq \tau^{\alpha}\phi(\varrho^{-2}/16)\geq 1$. By Proposition \ref{Pro.soltioun function},
\begin{align*}
\int_{\lambda(\varrho)}^{\lambda(4\varrho)}\int_{B_{4\varrho}}
\left|\mathcal{S}_{\alpha,1+\alpha,\phi}(\tau,y)\right|\,dyd\tau\lesssim\int_{\lambda(\varrho)}^{\lambda(4\varrho)}\tau^{-1}\,d\tau\leq C(\alpha,d,p,\delta_{0}),
\end{align*}
and
\begin{align*}
&\int_{\lambda(4\varrho)}^{\infty}\int_{B_{4\varrho}}\left|\mathcal{S}_{\alpha,1+\alpha,\phi}(\tau,y)\right|\,dyd\tau\\
&\lesssim\int_{\lambda(4\varrho)}^{\infty}\int_{B(4\varrho)}
\int_{[\phi(|y|^{-2})]^{-1}}^{2\tau^{\alpha}}\left(\phi^{-1}(r^{-1})\right)^{\frac{d}{2}}\tau^{-\alpha-1}\,drdyd\tau\\
&\lesssim\int_{\lambda(4\varrho)}^{\infty}\int_{B(4\varrho)}\left(\int_{[\phi(|y|^{-2})]^{-1}}^{[\phi(\varrho^{-2}/16)]^{-1}}
+\int_{[\phi(\varrho^{-2}/16)]^{-1}}^{2\tau^{\alpha}}
\right)\left(\phi^{-1}(r^{-1})\right)^{\frac{d}{2}}\tau^{-\alpha-1}\,drdyd\tau.
\end{align*}
The subsequent process repeats Kim \cite[Lemma 4.7]{Kim1} to obtain $\left|J(\varrho,s)\right|\leq C(\alpha,d,p,\delta_{0})$.

Hence we obtain
\begin{align*}
&\fint_{\mathcal{Q}_{\varrho}}\left|\mathcal{G}_{1}h(s,y)\right|^{p}\,dzds\\
&\lesssim\lambda(\varrho)^{-1}\varrho^{-d}\int_{-\lambda(\varrho)}^{0}\int_{-\infty}^{-2\lambda(\varrho)}
\int_{B_{4\varrho}}\left|\mathcal{S}_{\alpha,1+\alpha,\phi}(s-\tau,y)\right|dy\left(\int_{B_{5\varrho}}
\left|h(\tau,z)\right|^{p}\,dz\right)
d\tau ds\\
&\lesssim\lambda(\varrho)^{-1}\varrho^{-d}\int_{-\infty}^{-2\lambda(\varrho)}\int_{-\lambda(\varrho)-\tau}^{-\tau}
\int_{B_{4\varrho}}\left|\mathcal{S}_{\alpha,1+\alpha,\phi}(s,y)\right|\,dyds\left(\int_{B_{5\varrho}}
\left|h(\tau,z)\right|^{p}\,dz\right)d\tau\\
&\lesssim J_{1}+J_{2},
\end{align*}
where
\begin{align*}
J_{1}&=\lambda(\varrho)^{-1}\varrho^{-d}\int_{-\infty}^{-k\lambda(\varrho)}\int_{-\lambda(\varrho)-\tau}^{-\tau}
\int_{B_{4\varrho}}\left|\mathcal{S}_{\alpha,1+\alpha,\phi}(s,y)\right|\,dyds\left(\int_{B_{5\varrho}}
\left|h(\tau,z)\right|^{p}\,dz\right)d\tau,\\
J_{2}&=\lambda(\varrho)^{-1}\varrho^{-d}\int_{-k\lambda(\varrho)}^{-2\lambda(\varrho)}\int_{-\lambda(\varrho)-\tau}^{-\tau}
\int_{B_{4\varrho}}\left|\mathcal{S}_{\alpha,1+\alpha,\phi}(s,y)\right|\,dyds\left(\int_{B_{5\varrho}}
\left|h(\tau,z)\right|^{p}\,dz\right)d\tau,
\end{align*}
where $k\geq 3$ is chosen such that $(k-1)\lambda(\varrho)>\lambda(4\varrho)$, which clearly exists. Noting that $\tau\in\left(-\infty,-2\lambda(\varrho)\right)$ and using Proposition \ref{Pro.soltioun function}, we derive
\begin{align*}
J_{2}&=\lambda(\varrho)^{-1}\varrho^{-d}\int_{-k\lambda(\varrho)}^{-2\lambda(\varrho)}\int_{-\lambda(\varrho)-\tau}^{-\tau}
\int_{B_{4\varrho}}\left|\mathcal{S}_{\alpha,1+\alpha,\phi}(s,y)\right|\,dyds\left(\int_{B_{5\varrho}}
\left|h(\tau,z)\right|^{p}\,dz\right)d\tau\\
&\lesssim\lambda(\varrho)^{-1}\varrho^{-d}\int_{-k\lambda(\varrho)}^{-2\lambda(\varrho)}\int_{\lambda(\varrho)}^{k\lambda(\varrho)}
s^{-1}\,ds\left(\int_{B_{5\varrho}}
\left|h(\tau,z)\right|^{p}\,dz\right)d\tau\\
&\lesssim \log(k)\lambda(\varrho)^{-1}\varrho^{-d}\int_{-k\lambda(\varrho)}^{0}\int_{B_{5\varrho}}
\left|h(\tau,z)\right|^{p}\,dzd\tau\\
&\lesssim\mathcal{M}_{t,x}\left|h\right|^{p}(t,x).
\end{align*}
Moreover, since $-\lambda(\varrho)-\tau\geq \left(k-1\right)\lambda(\varrho)>\lambda(4\varrho)$, we derive
\begin{align*}
&\int_{-\lambda(\varrho)-\tau}^{-\tau}\int_{B_{4\varrho}}
\left|\mathcal{S}_{\alpha,1+\alpha,\phi}(s,y)\right|\left(\int_{B_{5\varrho}}\left|h(\tau,z)\right|^{p}\,dz\right)\,dyds\\
&\lesssim\int_{-\lambda(\varrho)-\tau}^{-\tau}\int_{B_{4\varrho}}\int_{\phi(|y|^{-2})^{-1}}^{2s^{\alpha}}s^{-\alpha-1}
\left(\phi^{-1}(r^{-1})\right)^{\frac{d}{2}}\left(\int_{B_{5\varrho}}\left|h(\tau,z)\right|^{p}\,dz\right)\,drdyds\\
&\lesssim\int_{-\lambda(\varrho)-\tau}^{-\tau}\left(\int_{0}^{[\phi(\varrho^{-2}/16)]^{-1}}\int_{\left|y\right|\leq [\phi^{-1}(r^{-1})]^{-\frac{1}{2}}}+\int_{[\phi(\varrho^{-2}/16)]^{-1}}^{2s^{\alpha}}
\int_{B_{4\varrho}}\right)s^{-\alpha-1}\left(\phi^{-1}(r^{-1})\right)^{\frac{d}{2}}\\
&\quad\cdot\left(\int_{B_{5\varrho}}\left|h(\tau,z)\right|^{p}\,dz\right)\,dydrds\\
&= I_{1}(\varrho,\tau)+I_{2}(\varrho,\tau).
\end{align*}
For $I_{1}$, we obtain
\begin{align*}
I_{1}(\varrho,\tau)&\lesssim[\phi(\varrho^{-2}/16)]^{-1}
\int_{-\lambda(\varrho)-\tau}^{-\tau}\int_{B_{5\varrho}}\left|h(\tau,z)\right|^{p}s^{-\alpha-1}\,dzds.
\end{align*}
Combining \eqref{lower scailing condition} and setting $m=\phi^{-1}(r^{-1})$, $M=\varrho^{-2}/16$, we obtain
\begin{align}\label{fangsuo condition}
\left(\phi^{-1}(r^{-1})\right)^{\frac{d}{2}}&=\left(\frac{\phi^{-1}(r^{-1})}{\varrho^{-2}/16}\varrho^{-2}/16\right)^{\frac{d}{2}}
\lesssim\varrho^{-d}r^{-\frac{d}{2}}[\phi(\varrho^{-2}/16)]^{-\frac{d}{2}}.
\end{align}
We derive
\begin{align*}
&I_{2}(\varrho,\tau)\\
&\lesssim\int_{-\lambda(\varrho)-\tau}^{-\tau}\int_{[\phi(\varrho^{-2}/16)]^{-1}}^{2s^{\alpha}}
\int_{B_{4\varrho}}\varrho^{-d}r^{-\frac{d}{2}}[\phi(\varrho^{-2}/16)]^{-\frac{d}{2}}s^{-\alpha-1}
\left(\int_{B_{5\varrho}}\left|h(\tau,z)\right|^{p}\,dz\right)\,dydrds\\
&\lesssim\int_{-\lambda(\varrho)-\tau}^{-\tau}[\phi(\varrho^{-2}/16)]^{-\frac{d}{2}}
\int_{[\phi(\varrho^{-2}/16)]^{-1}}^{2s^{\alpha}}r^{-\frac{d}{2}}s^{-\alpha-1}\,drds
\int_{B_{5\varrho}}\left|h(\tau,z)\right|^{p}\,dz\\
&\lesssim\int_{-\lambda(\varrho)-\tau}^{-\tau}[\phi(\varrho^{-2}/16)]^{-\frac{d}{2}}s^{-\alpha-1}
\left(s^{\frac{(2-d)\alpha}{2}}+[\phi(\varrho^{-2}/16)]^{-\frac{2-d}{2}}+\mathbf{1}_{d=2}\text{ }s^{\alpha\varepsilon}\phi(\varrho^{-2}/16)^{\varepsilon}\right)\\
&\quad\cdot\int_{B_{5\varrho}}\left|h(\tau,z)\right|^{p}\,dzds,\quad\text{ where }\varepsilon>0 \text{ is an arbitrary constant}.
\end{align*}
Now we obtain
\begin{align*}
J_{1}\lesssim \lambda(\varrho)^{-1}\varrho^{-d}\int_{-\infty}^{-k\lambda(\varrho)}I_{1}(\varrho,\tau)+I_{2}(\varrho,\tau)\,d\tau
\lesssim \lambda(\varrho)^{-1}\varrho^{-d}\left(J_{11}+J_{12}+J_{13}\right),
\end{align*}
where
\begin{align*}
J_{11}&=[\phi(\varrho^{-2}/16)]^{-1}\int_{-\infty}^{-k\lambda(\varrho)}
\int_{-\lambda(\varrho)-\tau}^{-\tau}\int_{B_{5\varrho}}
\left|h(\tau,z)\right|^{p}s^{-\alpha-1}
\,dzdsd\tau,\\
J_{12}&=[\phi(\varrho^{-2}/16)]^{-\frac{d}{2}}\int_{-\infty}^{-k\lambda(\varrho)}
\int_{-\lambda(\varrho)-\tau}^{-\tau}\int_{B_{5\varrho}}\left|h(\tau,z)\right|^{p}s^{-\alpha-1+\frac{\left(2-d\right)\alpha}{2}}
\,dzdsd\tau,\\
J_{13}&=\mathbf{1}_{d=2}[\phi(\varrho^{-2}/16)]^{-\frac{d}{2}+\varepsilon}
\int_{-\infty}^{-k\lambda(\varrho)}\int_{-\lambda(\varrho)-\tau}^{-\tau}\int_{B_{5\varrho}}\left|h(\tau,z)\right|^{p}
s^{-\alpha-1+\alpha\varepsilon}\,dzdsd\tau.
\end{align*}
We now estimate $J_{11}, J_{12}$, and $J_{13}$ separately. For $J_{11}$, we derive
\begin{align*}
J_{11}&\lesssim[\phi(\varrho^{-2}/16)]^{-1}\int_{-\infty}^{-k\lambda(\varrho)}\int_{B_{5\varrho}}
\left|h(\tau,z)\right|^{p}\,dz
\left(\left(-\lambda(\varrho)-\tau\right)^{-\alpha}-\left(-\tau\right)^{-\alpha}\right)\,d\tau\\
&\lesssim[\phi(\varrho^{-2}/16)]^{-1}\int_{-\infty}^{-k\lambda(\varrho)}\frac{d}{d\tau}\int_{\tau}^{0}\int_{B_{5\varrho}}
\left|h(\tilde{\tau},z)\right|^{p}\,dzd\tilde{\tau}
\left(\left(-\lambda(\varrho)-\tau\right)^{-\alpha}-\left(-\tau\right)^{-\alpha}\right)\,d\tau\\
&\lesssim[\phi(\varrho^{-2}/16)]^{-1}\lambda(\varrho)^{-\alpha}\int_{-k\lambda(\varrho)}^{0}\int_{B_{5\varrho}}
\left|h(\tilde{\tau},z)\right|^{p}\,dzd\tilde{\tau}  \\
&\quad+
[\phi(\varrho^{-2}/16)]^{-1}\int_{-\infty}^{-k\lambda(\varrho)}\left(\int_{\tau}^{0}\int_{B_{5\varrho}}
\left|h(\tilde{\tau},x)\right|^{p}\,dzd\tilde{\tau}\right)\left((-\lambda(\varrho)-\tau)^{-\alpha-1}-(-\tau)^{-\alpha-1}\right)\,d\tau\\
&\lesssim\lambda(\varrho)\varrho^{d}\mathcal{M}_{t,x}\left|h\right|^{p}(t,x).
\end{align*}
Similarly,
\begin{align*}
J_{12}&\lesssim [\phi(\varrho^{-2}/16)]^{-\frac{d}{2}}\lambda(\varrho)^{-\frac{d\alpha}{2}}\int_{-k\lambda(\varrho)}^{0}
\int_{B_{5\varrho}}\left|h(\tilde{\tau},z)\right|^{p}\,dzd\tilde{\tau}\\
&\lesssim\lambda(\varrho)\varrho^{d}\mathcal{M}_{t,x}\left|h\right|^{p}(t,x).
\end{align*}
Choosing $\varepsilon>0$ sufficiently small such that $\alpha\varepsilon<\alpha$, we derive
\begin{align*}
J_{13}&\lesssim\mathbf{1}_{d=2}[\phi(\varrho^{-2}/16)]^{-\frac{d}{2}+\varepsilon}\lambda(\varrho)^{-\alpha+\alpha\varepsilon}
\int_{-k\lambda(\varrho)}^{0}\int_{B_{5\varrho}}\left|h(\tilde{\tau},z)\right|^{p}\,dzd\tilde{\tau}\\
&\qquad+\mathbf{1}_{d=2}[\phi(\varrho^{-2}/16)]^{-\frac{d}{2}+\varepsilon}\int_{-\infty}^{-k\lambda(\varrho)}
\left(\int_{\tau}^{0}\int_{B_{5\varrho}}
\left|h(\tilde{\tau},x)\right|^{p}\,dzd\tilde{\tau}\right)\\
&\qquad\cdot\left((-\lambda(\varrho)-\tau)^{-\alpha-1+\alpha\varepsilon}-(-\tau)^{-\alpha-1+\alpha\varepsilon}\right)\,d\tau\\
&\lesssim\lambda(\varrho)\varrho^{d}\mathcal{M}_{t,x}\left|h\right|^{p}(t,x).
\end{align*}
In summary, we derive
\begin{align*}
\fint_{\mathcal{Q}_{\varrho}}\left|\mathcal{G}_{1}h(s,z)\right|^{p}\,dzds\lesssim J_{1}+J_{2}\lesssim\mathcal{M}_{t,x}\left|h\right|^{p}(t,x),
\end{align*}
where the constant $C$ depends on $\alpha,d,p,\delta_{0}$.

For $\mathcal{G}_{0}$, similar to $\mathcal{G}_{1}$, we can derive
\begin{align*}
&\fint_{\mathcal{Q}_{\varrho}}\left|\mathcal{G}_{0}h(s,z)\right|^{p}\,dzds\\
&\lesssim\fint_{\mathcal{Q}_{\varrho}}\left|\int_{-3\lambda(\varrho)}^{s}
\int_{\mathbb{R}^{d}}\mathbf{1}_{0<s-\tau<T}\mathcal{S}_{\alpha,1,\phi}(s-\tau,y)h(\tau,z-y)\,dyd\tau\right|^{p}\,dzds\\
&\lesssim\lambda(\varrho)^{-1}\varrho^{-d}\int_{-\lambda(\varrho)}^{0}J(\varrho,s)^{p-1}\int_{\infty}^{-2\lambda(\varrho)}
\int_{B_{4\varrho}}\mathbf{1}_{0<s-\tau<T}\left|\mathcal{S}_{\alpha,1,\phi}(s-\tau,y)\right|
\left(\int_{B_{5\varrho}}\left|h(\tau,z)\right|^{p}\,dz\right)dyd\tau ds,
\end{align*}
where
\begin{align*}
J(\varrho,s)=\int_{-\infty}^{-2\lambda(\varrho)}\int_{B_{4\varrho}}
\mathbf{1}_{0<s-\tau<T}\left|\mathcal{S}_{\alpha,1,\phi}(s-\tau,y)\right|\,dyd\tau.
\end{align*}
Combining Proposition \ref{Pro.soltioun function}, we obtain
\begin{align*}
J(\varrho,s)&\lesssim\int_{\lambda(\varrho)}^{\lambda(4\varrho)}\int_{B_{4\varrho}}
\mathbf{1}_{0<\tau<T}\left|\mathcal{S}_{\alpha,1,\phi}(\tau,y)\right|\,dyd\tau
+\int_{\lambda(4\varrho)}^{\infty}\int_{B_{4\varrho}}
\mathbf{1}_{0<\tau<T}\left|\mathcal{S}_{\alpha,1,\phi}(\tau,y)\right|\,dyd\tau\\
&\lesssim T^{\alpha}\int_{\lambda(\varrho)}^{\lambda(4\varrho)}\tau^{-1}\,d\tau\\
&\quad+T^{\alpha}
\int_{\lambda(4\varrho)}^{\infty}\int_{B(4\varrho)}\left(\int_{[\phi(|y|^{-2})]^{-1}}^{[\phi(\varrho^{-2}/16)]^{-1}}
+\int_{[\phi(\varrho^{-2}/16)]^{-1}}^{2\tau^{\alpha}}
\right)\left(\phi^{-1}(r^{-1})\right)^{\frac{d}{2}}\tau^{-\alpha-1}\,drdyd\tau\\
&\leq C(\alpha,p,d,\delta_{0},T).
\end{align*}
Hence, we obtain
\begin{align*}
&\fint_{\mathcal{Q}_{\varrho}}\left|\mathcal{G}_{0}h(s,z)\right|^{p}\,dzds\\
&\lesssim_{T}\lambda(\varrho)^{-1}\varrho^{-d}\int_{-\lambda(\varrho)}^{0}\int_{-\infty}^{-2\lambda(\varrho)}
\int_{B_{4\varrho}}\int_{B_{5\varrho}}\mathbf{1}_{0<s-\tau<T}\left|\mathcal{S}_{\alpha,1,\phi}(s-\tau,y)\right|
\left|h(\tau,z)\right|^{p}\,dzdyd\tau ds\\
&\lesssim J_{1}+J_{2},
\end{align*}
where
\begin{align*}
J_{1}&=\lambda(\varrho)^{-1}\varrho^{-d}\int_{-\infty}^{-k\lambda(\varrho)}\int_{-\lambda(\varrho)-\tau}^{-\tau}
\int_{B_{4\varrho}}\mathbf{1}_{0<s<T}\left|\mathcal{S}_{\alpha,1,\phi}(s,y)\right|\,dyds\left(\int_{B_{5\varrho}}
\left|h(\tau,z)\right|^{p}\,dz\right)d\tau,\\
J_{2}&=\lambda(\varrho)^{-1}\varrho^{-d}\int_{-k\lambda(\varrho)}^{-2\lambda(\varrho)}\int_{-\lambda(\varrho)-\tau}^{-\tau}
\int_{B_{4\varrho}}\mathbf{1}_{0<s<T}\left|\mathcal{S}_{\alpha,1,\phi}(s,y)\right|\,dyds\left(\int_{B_{5\varrho}}
\left|h(\tau,z)\right|^{p}\,dz\right)d\tau.
\end{align*}
Combining Proposition \ref{Pro.soltioun function} and following a similar process as for $\mathcal{G}_{1}$, we derive \eqref{supp estimate 3.1}.
\end{proof}
\begin{lemma}\label{supp estimate 4}
Let $T<\infty$, $1<p<\infty$, $\varrho>0$, and $h\in C_{c}^{\infty}\left(\mathbb{R}^{d+1}\right)$ with support contained in $\left(-\infty,-2\lambda(\varrho)\right)\times B^{c}_{2\varrho}$. Then for any $(t,x)\in\mathcal{Q}_{\varrho}$, the following estimates hold:
\begin{align}
\label{supp estimate 4.1}
\fint_{\mathcal{Q}_{\varrho}}\fint_{\mathcal{Q}_{\varrho}}
\left|\mathcal{G}_{0}h(s_{1},z_{1})-\mathcal{G}_{0}h(s_{2},z_{2})\right|^{p}\,dz_{1}dz_{2}ds_{1}ds_{2} &\lesssim_{T}
\mathcal{M}_{t}\mathcal{M}_{x}\left|h\right|^{p}(t,x)+\mathcal{M}_{t,x}\left|h\right|^{p}(t,x),\\
\fint_{\mathcal{Q}_{\varrho}}\fint_{\mathcal{Q}_{\varrho}}
\left|\mathcal{G}_{1}h(s_{1},z_{1})-\mathcal{G}_{1}h(s_{2},z_{2})\right|^{p}\,dz_{1}dz_{2}ds_{1}ds_{2} &\lesssim
\mathcal{M}_{t}\mathcal{M}_{x}\left|h\right|^{p}(t,x)+\mathcal{M}_{t,x}\left|h\right|^{p}(t,x).
\label{supp estimate 4.2}
\end{align}
\end{lemma}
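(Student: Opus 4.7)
The plan is to reduce the double oscillation to one-variable oscillations in $s$ and in $z$ via the triangle inequality
\begin{equation*}
\bigl|\mathcal{G}_k h(s_1,z_1) - \mathcal{G}_k h(s_2,z_2)\bigr| \leq \bigl|\mathcal{G}_k h(s_1,z_1) - \mathcal{G}_k h(s_1,z_2)\bigr| + \bigl|\mathcal{G}_k h(s_1,z_2) - \mathcal{G}_k h(s_2,z_2)\bigr|,
\end{equation*}
and then to apply the fundamental theorem of calculus to each piece. The key structural observation is that since $h$ is supported in $\{\tau < -2\lambda(\varrho)\} \cap \{|y| > 2\varrho\}$ and $(s,z) \in \mathcal{Q}_{\varrho}$, the kernel arguments always satisfy $s - \tau > \lambda(\varrho)$ and $|z - y| > \varrho$; this keeps us far from the singularity so differentiation under the integral sign is legitimate, and the prefactors $|z_1 - z_2| \leq 2\varrho$ and $|s_1 - s_2| \leq \lambda(\varrho)$ produce exactly the gain needed to absorb the extra derivative on the kernel.

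For the spatial oscillation I would invoke $|\nabla_z \mathcal{S}_{\alpha,\gamma,\phi}(t,x)| \lesssim t^{2\alpha-\gamma}\phi(|x|^{-2})/|x|^{d+1}$ from Proposition \ref{Pro.soltioun function}(ii), splitting the $y$-integration into $\{2\varrho \leq |y| \leq 4\varrho\}$, where the mean-value gain is useless and the triangle inequality combined with the method of Lemma \ref{supp estimate 3} already yields a $\mathcal{M}_{t,x}|h|^p$ bound, and $\{|y| > 4\varrho\}$, where $|z^\ast - y| \geq |y|/2$ so the extra factor of $2\varrho/|y|$ is harmless. Applying the integration-by-parts identity \eqref{part intefration formula} in the spatial variable converts the integral into one over a radial parameter against $\int_{B_{3\kappa}(x)} |h(\tau,y)| \, dy$, which is controlled by $\mathcal{M}_x |h|$. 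For the temporal oscillation I would use the identity $\partial_s \mathcal{S}_{\alpha,\gamma,\phi} = \mathcal{S}_{\alpha,\gamma+1,\phi}$, which follows from the definition $\mathcal{S}_{\alpha,\gamma,\phi} = D^{\gamma-\alpha}_t \mathcal{S}_{\alpha,\phi}$ together with Proposition \ref{M.T.F.P}(ii), so that Proposition \ref{Pro.soltioun function}(ii) delivers $|\partial_s \mathcal{S}_{\alpha,\gamma,\phi}(t,x)| \lesssim t^{2\alpha-\gamma-1} \phi(|x|^{-2})/|x|^d$. The factor $|s_1 - s_2| \leq \lambda(\varrho)$ absorbs the extra $t^{-1}$, and the rest of the temporal estimate follows the template of Lemma \ref{supp estimate 3}: split $\tau \in (-\infty, -k\lambda(\varrho)) \cup (-k\lambda(\varrho), -2\lambda(\varrho))$ for a suitable $k$, apply Hölder and Fubini, and use \eqref{lower scailing condition}--\eqref{Pro.Convergence} to sum the resulting $\kappa$-integrals.

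The appearance of two different maximal functions on the right-hand side is structurally natural: the iterated $\mathcal{M}_t \mathcal{M}_x|h|^p$ arises when the spatial integral is performed first, producing $\kappa^d \mathcal{M}_x|h|^p$ which is then handled against $\mathcal{M}_t$ in the temporal variable, while the parabolic $\mathcal{M}_{t,x}|h|^p$ is contributed by the near-diagonal annulus $\{2\varrho \leq |y| \leq 4\varrho\}$ where no gain from the mean value theorem is available. The main obstacle will be the $\mathcal{G}_0$ estimate: the extra indicator $\mathbf{1}_{0 < s-\tau < T}$ narrows the effective time window and, when combined with the derivative bounds, forces careful bookkeeping of the Hölder exponents and the split of the $\tau$-integral so that the parabolic scaling is preserved, mirroring the $\mathcal{G}_0$ analysis at the end of Lemma \ref{supp estimate 3}. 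Uniformity of all constants in $\varrho$ ultimately relies on the lower scaling condition \eqref{lower scailing condition} through the standing inequality \eqref{Pro.Convergence}.
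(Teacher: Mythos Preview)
Your overall architecture --- triangle inequality into spatial and temporal oscillations, mean value theorem on each, then H\"older to reduce to an $L^1$-kernel bound times a maximal function --- matches the paper's proof exactly, and your treatment of the temporal oscillation $I_2$ is essentially what the paper does: the identity $\partial_s\mathcal{S}_{\alpha,1+\alpha,\phi}=\mathcal{S}_{\alpha,2+\alpha,\phi}$ gives a kernel that decays like $t^{\alpha-2}$ in time, which is integrable over $(\lambda(\varrho),\infty)$ and yields $\mathcal{M}_{t,x}|h|^p$ after the radial integration-by-parts in space.

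The gap is in the spatial oscillation $I_1$. You invoke only the first bound
\[
|\nabla_z\mathcal{S}_{\alpha,1+\alpha,\phi}(t,x)|\lesssim t^{\alpha-1}\,\frac{\phi(|x|^{-2})}{|x|^{d+1}}
\]
from Proposition~\ref{Pro.soltioun function}(ii), together with a fixed spatial cut at $|y|=4\varrho$. On the far piece this gives, after the radial integration by parts and the lower scaling bound \eqref{Pro.Convergence}, an estimate of the form
\[
\varrho\int_{-\infty}^{-2\lambda(\varrho)}(s_1-\tau)^{\alpha-1}\int_{4\varrho}^{\infty}\frac{\phi(\kappa^{-2})}{\kappa^{2}}\,\mathcal{M}_x|h|^p(\tau,x)\,d\kappa\,d\tau
\;\lesssim\;\phi(\varrho^{-2})\int_{-\infty}^{-2\lambda(\varrho)}(s_1-\tau)^{\alpha-1}\,\mathcal{M}_x|h|^p(\tau,x)\,d\tau,
\]
and the remaining $\tau$-integral diverges: $(s_1-\tau)^{\alpha-1}$ is not integrable on $(\lambda(\varrho),\infty)$, and an integration by parts in $\tau$ against $\int_\tau^0\mathcal{M}_x|h|^p$ only recycles the same divergent integral. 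The extra factor $2\varrho/|y|$ from the mean value theorem buys convergence in $y$ but not in $\tau$. The paper resolves this by also using the \emph{second} bound in Proposition~\ref{Pro.soltioun function}(ii) (the integral bound valid when $t^\alpha\phi(|x|^{-2})\ge 1$) and by splitting the $y$-integration not at a fixed radius but at the $\tau$-dependent threshold $[\phi^{-1}((s_1-\tau)^{-\alpha})]^{-1/2}$; this produces a time factor $(s_1-\tau)^{-\alpha/2-1}$, which \emph{is} integrable, and leads to the $\mathcal{M}_t\mathcal{M}_x|h|^p$ contribution. Consequently, in the paper's argument $\mathcal{M}_t\mathcal{M}_x|h|^p$ comes entirely from $I_1$ and $\mathcal{M}_{t,x}|h|^p$ entirely from $I_2$, not from a near/far spatial split as you suggest.
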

\begin{proof}
First, we estimate $\mathcal{G}_{1}$. Note that
\begin{align*}
&\int_{\mathcal{Q}_{\varrho}}\int_{\mathcal{Q}_{\varrho}}
\left|\mathcal{G}_{1}h(s_{1},z_{1})-\mathcal{G}_{1}h(s_{2},z_{2})\right|^{p}\,dz_{1}dz_{2}ds_{1}ds_{2}\\
&\lesssim\int_{\mathcal{Q}_{\varrho}}\int_{\mathcal{Q}_{\varrho}}
\left|\mathcal{G}_{1}h(s_{1},z_{1})-\mathcal{G}_{1}h(s_{1},z_{2})\right|^{p}\,dz_{1}dz_{2}ds_{1}ds_{2}\\
&\quad+\int_{\mathcal{Q}_{\varrho}}\int_{\mathcal{Q}_{\varrho}}
\left|\mathcal{G}_{1}h(s_{1},z_{2})-\mathcal{G}_{1}h(s_{2},z_{2})\right|^{p}\,dz_{1}dz_{2}ds_{1}ds_{2}\\
&\triangleq:I_{1}+I_{2}.
\end{align*}
We estimate $I_{1}$ and $I_{2}$ separately. Note that $h(\tau,z)=0$ for $z\in B_{2\varrho}$. Using Minkowski's inequality and H\"{o}lder's inequality, we derive
\begin{align*}
&\int_{\mathcal{Q}_{\varrho}}\int_{\mathcal{Q}_{\varrho}}
\left|\mathcal{G}_{1}h(s_{1},z_{1})-\mathcal{G}_{1}h(s_{1},z_{2})\right|^{p}\,dz_{1}dz_{2}ds_{1}ds_{2}\\
&\lesssim\int_{\mathcal{Q}_{\varrho}}\int_{\mathcal{Q}_{\varrho}}
\left|\int_{-\infty}^{-2\lambda(\varrho)}\int_{\mathbb{R}^{d}}
\left(\mathcal{S}_{\alpha,1+\alpha,\phi}(s_{1}-\tau,z_{1}-y)
-\mathcal{S}_{\alpha,1+\alpha,\phi}(s_{1}-\tau,z_{2}-y)\right)h(\tau,y)dyd\tau\right|^{p}\\
&\qquad dz_{1}dz_{2}ds_{1}ds_{2}\\
&\lesssim\int_{\mathcal{Q}_{\varrho}}\int_{\mathcal{Q}_{\varrho}}J^{p-1}
\int_{-\infty}^{-2\lambda(\varrho)}\int_{B_{2\varrho}^{c}}
\left|\mathcal{S}_{\alpha,1+\alpha,\phi}(s_{1}-\tau,z_{1}-y)-\mathcal{S}_{\alpha,1+\alpha,\phi}(s_{1}-\tau,z_{2}-y)\right|
\left|h(\tau,y)\right|^{p}\,dyd\tau\\
&\qquad\,dz_{1}dz_{2}ds_{1}ds_{2},
\end{align*}
where
\begin{align*}
J=\int_{-\infty}^{-2\lambda(\varrho)}\int_{B_{2\varrho}^{c}}
\left|\mathcal{S}_{\alpha.1+\alpha,\phi}(s_{1}-\tau,z_{1}-y)-\mathcal{S}_{\alpha.1+\alpha,\phi}(s_{1}-\tau,z_{2}-y)\right|\,dyd\tau.
\end{align*}
By using Proposition \ref{Pro.soltioun function}, we derive that
\begin{align*}
J&\lesssim\int_{-\infty}^{-2\lambda(\varrho)}\int_{B_{2\varrho}^{c}}\int_{0}^{1}\left|\nabla\mathcal{S}_{\alpha,1+\alpha,\phi}
\left(s_{1}-\tau,\theta z_{1}+(1-\theta)z_{2}-y\right)\right|\left|z_{1}-z_{2}\right|\,d\theta dyd\tau
\end{align*}
Since $\theta\in (0,1)$ and $z_{1},z_{2}\in B_{\varrho}$, we have $\theta z_{1}+(1-\theta)z_{2}-y\in B_{\varrho}^{c}$, which implies
\begin{align*}
J&\lesssim \varrho\int_{-\infty}^{-2\lambda(\varrho)}\int_{B_{\varrho}^{c}}
\left|\nabla\mathcal{S}_{\alpha,1+\alpha,\phi}(s_{1}-\tau,y)\right|\,dyd\tau\\
&\lesssim\varrho\int_{\lambda(\varrho)}^{\infty}\int_{B_{\varrho}^{c}}
\left|\nabla\mathcal{S}_{\alpha,1+\alpha,\phi}(\tau,y)\right|\,dyd\tau\\
&\lesssim\varrho\int_{\lambda(\varrho)}^{\infty}\int_{\varrho}^{[\phi^{-1}(\tau^{-\alpha})]^{-\frac{1}{2}}}
\int_{\phi(l^{-2})^{-1}}^{2\tau^{\alpha}}\left(\phi^{-1}(r^{-1})\right)^{\frac{d+1}{2}}\tau^{-\alpha-1}l^{d-1}\,drdld\tau\\
&\quad+\varrho\int_{\lambda(\varrho)}^{\infty}\int_{[\phi^{-1}(\tau^{-\alpha})]^{-\frac{1}{2}}}^{\infty}\tau^{\alpha-1}
\frac{\phi(r^{-2})}{r^{2}}\,drd\tau.
\end{align*}
Following the steps in Kim \cite[Lemma 4.6]{Kim1}, we obtain $J\leq C\left(\alpha,p,d,\delta_{0}\right)$. Therefore,
\begin{align*}
I_{1}
&\lesssim\int_{\mathcal{Q}_{\varrho}}\int_{\mathcal{Q}_{\varrho}}
\int_{-\infty}^{-2\lambda(\varrho)}\int_{B_{2\varrho}^{c}}
\left|\mathcal{S}_{\alpha,1+\alpha,\phi}(s_{1}-\tau,z_{1}-y)-\mathcal{S}_{\alpha,1+\alpha,\phi}(s_{1}-\tau,z_{2}-y)\right|
\left|h(\tau,y)\right|^{p}\,dyd\tau\\
&\qquad\,dz_{1}dz_{2}ds_{1}ds_{2}\\
&\lesssim\varrho\int_{\mathcal{Q}_{\varrho}}\int_{\mathcal{Q}_{\varrho}}\int_{-\infty}^{-2\lambda(\varrho)}\int_{B_{2\varrho}^{c}}\int_{0}^{1}
\left|\nabla\mathcal{S}_{\alpha,1+\alpha,\phi}\left(s_{1}-\tau,\theta z_{1}+(1-\theta)z_{2}-y\right)\right|\left|h(\tau,y)\right|^{p}\,d\theta dyd\tau\\
&\qquad\,dz_{1}dz_{2}ds_{1}ds_{2}\\
&\lesssim\varrho\int_{\mathcal{Q}_{\varrho}}\int_{\mathcal{Q}_{\varrho}}\int_{-\infty}^{-2\lambda(\varrho)}
\int_{0}^{1}\int_{\varrho<\left|y\right|<[\phi^{-1}((s_{1}-\tau)^{-\alpha})]^{-\frac{1}{2}}}
\left|\nabla\mathcal{S}_{\alpha,1+\alpha,\phi}(s_{1}-\tau,y)\right|\left|h(\tau,\theta z_{1}+(1-\theta)z_{2}-y)\right|^{p}\\
&\qquad dyd\theta d\tau dz_{1}dz_{2}ds_{1}ds_{2}\\
&\quad +\varrho\int_{\mathcal{Q}_{\varrho}}\int_{\mathcal{Q}_{\varrho}}\int_{-\infty}^{-2\lambda(\varrho)}
\int_{0}^{1}\int_{\left|y\right|\geq[\phi^{-1}((s_{1}-\tau)^{-\alpha})]^{-\frac{1}{2}}}
\left|\nabla\mathcal{S}_{\alpha,1+\alpha,\phi}(s_{1}-\tau,y)\right|\left|h(\tau,\theta z_{1}+(1-\theta)z_{2}-y)\right|^{p}\\
&\qquad dyd\theta d\tau dz_{1}dz_{2}ds_{1}ds_{2}\\
&\triangleq: J_{11}+J_{12}.
\end{align*}
Let $\tilde{z}=\theta z_{1}+(1-\theta)z_{2}$. Since $\theta z_{1}+(1-\theta)z_{2}\in B_{2\varrho}$,
\begin{align*}
&\int_{0}^{1}\int_{\left|y\right|\geq[\phi^{-1}((s_{1}-\tau)^{-\alpha})]^{-\frac{1}{2}}}
\left|\nabla\mathcal{S}_{\alpha,1+\alpha,\phi}(s_{1}-\tau,y)\right|\left|h(\tau,\theta z_{1}+(1-\theta)z_{2}-y)\right|^{p}dyd\theta\\
&\lesssim\int_{0}^{1}\int_{\left|y\right|\geq[\phi^{-1}((s_{1}-\tau)^{-\alpha})]^{-\frac{1}{2}}}\left(s_{1}-\tau\right)^{\alpha-1}
\frac{\phi(\left|y\right|^{-2})}{\left|y\right|^{d+1}}\left|h(\tau,\theta z_{1}+(1-\theta)z_{2}-y)\right|^{p}\,dyd\theta\\
&\lesssim\int_{[\phi^{-1}((s_{1}-\tau)^{-\alpha})]^{-\frac{1}{2}}}^{\infty}
\left(s_{1}-\tau\right)^{\alpha-1}\frac{\phi(|\kappa|^{-2})}{|\kappa|^{d+2}}\int_{B_{3\kappa}(x)}\left|h(\tau,z)\right|^{p}\,dzd\kappa.
\end{align*}
Note that $s_{1}-\tau\in(\lambda(\varrho),\infty)$, and set $m=\phi^{-1}((s_{1}-\tau)^{-\alpha})$, $M=\varrho^{-2}$. Using \eqref{lower scailing condition}, we obtain
$$
[\phi^{-1}((s_{1}-\tau)^{-\alpha})]^{\frac{1}{2}}\lesssim \phi(\varrho^{-2})^{-\frac{1}{2}}\varrho^{-1}\left(s_{1}-\tau\right)^{-\frac{\alpha}{2}}.
$$
Hence we obtain that
\begin{align*}
J_{12}&\lesssim\varrho\int_{\mathcal{Q}_{\varrho}}\int_{\mathcal{Q}_{\varrho}}\int_{-\infty}^{-2\lambda(\varrho)}
\int_{[\phi^{-1}((s_{1}-\tau)^{-\alpha})]^{-\frac{1}{2}}}^{\infty}
\left(s_{1}-\tau\right)^{\alpha-1}\frac{\phi(|\kappa|^{-2})}{|\kappa|^{d+2}}
\int_{B_{3\kappa}(x)}\left|h(\tau,z)\right|^{p}\,dzd\kappa d\tau\\
&\quad\,dz_{1}dz_{2}ds_{1}ds_{2}\\
&\lesssim\varrho\lambda(\varrho)\varrho^{2d}\int_{-\infty}^{-2\lambda(\varrho)}\int_{-\lambda(\varrho)}^{0}
\int_{[\phi^{-1}((s_{1}-\tau)^{-\alpha})]^{-\frac{1}{2}}}^{\infty}
\left(s_{1}-\tau\right)^{\alpha-1}\frac{\phi(|\kappa|^{-2})}{|\kappa|^{d+2}}
\int_{B_{3\kappa}(x)}\left|h(\tau,z)\right|^{p}\\
&\quad\,dzd\kappa ds_{1}d\tau\\
&\lesssim\phi(\varrho^{-2})^{-\frac{1}{2}}\lambda(\varrho)\varrho^{2d}\int_{-\infty}^{-2\lambda(\varrho)}\int_{-\lambda(\varrho)}^{0}
\int_{[\phi^{-1}((s_{1}-\tau)^{-\alpha})]^{-\frac{1}{2}}}^{\infty}\left(s_{1}-\tau\right)^{\frac{\alpha}{2}-1}
\frac{\phi(\left|\kappa\right|^{-2})}{\left|\kappa\right|}\mathcal{M}_{x}\left|h\right|^{p}(\tau,x)\\
&\quad\,d\kappa ds_{1}d\tau\\
&\lesssim\phi(\varrho^{-2})^{-\frac{1}{2}}\lambda(\varrho)\varrho^{2d}\int_{-\infty}^{-2\lambda(\varrho)}
\int_{-\lambda(\varrho)-\tau}^{-\tau}s^{-\frac{\alpha}{2}-1}\mathcal{M}_{x}\left|h\right|^{p}(\tau,x)\,dsd\tau\\
&\lesssim\phi(\varrho^{-2})^{-\frac{1}{2}}\lambda(\varrho)\varrho^{2d}\left(\lambda(\varrho)\right)^{-\frac{\alpha}{2}}
\int_{-2\lambda(\varrho)}^{0}\mathcal{M}_{x}\left|h\right|^{p}(\tau,x)\,d\tau\\
&\quad+\phi(\varrho^{-2})^{-\frac{1}{2}}\lambda(\varrho)\varrho^{2d}\int_{-\infty}^{-2\lambda(\varrho)}
\int_{\tau}^{0}\mathcal{M}_{x}\left|h\right|^{p}(\tilde{\tau},x)\,d\tilde{\tau}
\left((-\lambda(\varrho)-\tau)^{-\frac{\alpha}{2}-1}-(-\tau)^{-\frac{\alpha}{2}-1}\right)\,d\tau\\
&\lesssim\left(\lambda(\varrho)\varrho^{d}\right)^{2}\mathcal{M}_{t}\mathcal{M}_{x}\left|h\right|^{p}(t,x).
\end{align*}
For $J_{11}$,~by using \eqref{part intefration formula}, we derive that
\begin{align*}
&\int_{0}^{1}\int_{\varrho<\left|y\right|<[\phi^{-1}((s_{1}-\tau)^{-\alpha})]^{-\frac{1}{2}}}
\left|\nabla\mathcal{S}_{\alpha,1+\alpha,\phi}(s_{1}-\tau,y)\right|\left|h(\tau,\tilde{z}-y)\right|^{p}\,dyd\theta\\
&\lesssim\left(s_{1}-\tau\right)^{-\alpha-1}
\int_{0}^{1}\int_{\varrho<\left|y\right|<[\phi^{-1}((s_{1}-\tau)^{-\alpha})]^{-\frac{1}{2}}}
\int_{(\phi(|y|^{-2}))^{-1}}^{2(s_{1}-\tau)^{\alpha}}\left[\phi^{-1}(r^{-1})\right]^{\frac{d+1}{2}}
\left|h(\tau,\tilde{z}-y)\right|^{p}\,drdyd\theta\\
&\lesssim\left(s_{1}-\tau\right)^{-\alpha-1}\int_{\varrho}^{[\phi^{-1}((s_{1}-\tau)^{-\alpha})]^{-\frac{1}{2}}}
\frac{d}{d\kappa}\int_{\phi(\left|\kappa\right|^{-2})^{-1}}^{2(s_{1}-\tau)^{\alpha}}\left[\phi^{-1}(r^{-1})\right]^{\frac{d+1}{2}}\,dr
\int_{B_{3\kappa}(x)}\left|h(\tau,z)\right|^{p}\,dzd\kappa\\
&\quad+\left(s_{1}-\tau\right)^{-\alpha-1}\int_{(s_{1}-\tau)^{\alpha}}^{2(s_{1}-\tau)^{\alpha}}
\left[\phi^{-1}(r^{-1})\right]^{\frac{d+1}{2}}\,dr
\int_{B_{[\phi^{-1}((s_{1}-\tau)^{-\alpha})]^{-\frac{1}{2}}}}\left|h(\tau,\tilde{z}-y)\right|^{p}
\,dy\\
&\triangleq:J_{111}+J_{112}.
\end{align*}
For $J_{112}$, note that:
\begin{align*}
&\left[\phi^{-1}(r^{-1})\right]^{\frac{d+1}{2}}\lesssim
r^{-\frac{d+1}{2}}\left[\phi((s_{1}-\tau)^{-\alpha})\right]^{-\frac{d+1}{2}}\left(s_{1}-\tau\right)^{-\frac{\alpha(d+1)}{2}},\text{ }\varrho<[\phi^{-1}((s_{1}-\tau)^{-\alpha})]^{-\frac{1}{2}},\\
&\left[\phi^{-1}(r^{-1})\right]^{\frac{1}{2}}
\lesssim \left(\phi(\varrho^{-2})\right)^{-\frac{1}{2}}r^{-\frac{1}{2}}\varrho^{-1},
\qquad\left(\lambda(\varrho)\right)^{\alpha}<\left(s_{1}-\tau\right)^{\alpha}<r,\qquad
\end{align*}
we derive:
\begin{align*}
J_{112}&\lesssim\left(s_{1}-\tau\right)^{-\alpha-1}\int_{(s_{1}-\tau)^{\alpha}}^{2(s_{1}-\tau)^{\alpha}}
\left[\phi^{-1}(r^{-1})\right]^{\frac{1}{2}}\,dr\mathcal{M}_{x}\left|h\right|^{p}(\tau,x)\\
&\lesssim\left(s_{1}-\tau\right)^{-\alpha-1}
\left(\phi(\varrho^{-2})\right)^{-\frac{1}{2}}\varrho^{-1}\int_{(\lambda(\varrho))^{\alpha}}^{2(s_{1}-\tau)^{\alpha}}
r^{-\frac{1}{2}}\,dr\mathcal{M}_{x}\left|h\right|^{p}(\tau,x)
\end{align*}
For $J_{111}$, observe that:
\begin{align*}
\left(s_{1}-\tau\right)^{\alpha}\phi(\left|\kappa\right|^{-2})\geq 1,\quad\left|\kappa\right|^{-2}
\leq\phi^{-1}\left(2^{-1}r^{-1}\right)\lesssim\phi^{-1}(r^{-1})
\end{align*}
\begin{align*}
\left|\frac{d}{d\kappa}\int_{\phi(\left|\kappa\right|^{-2})^{-1}}^{2(s_{1}-\tau)^{\alpha}}
\left[\phi^{-1}(r^{-1})\right]^{\frac{d+1}{2}}\,dr\right|
&\lesssim\left|\left[\phi^{-1}(\zeta^{-1})\right]^{\frac{d+1}{2}}
\big|_{\zeta=\phi\left(\left|\kappa\right|^{-2}\right)^{-1}}\phi(\left|\kappa\right|^{-2})^{-2}\phi'(\left|\kappa\right|^{-2})
\left|\kappa\right|^{-3}\right|\\
&=\left|\frac{\phi(\left|\kappa\right|^{-2})^{-1}}{\left|\kappa\right|^{\frac{d+1}{2}}}\right|\\
&\lesssim\left|\int_{\phi(\left|\kappa\right|^{-2})^{-1}}^{2\phi(\left|\kappa\right|^{-2})^{-1}}
\left[\phi^{-1}(r^{-1})\right]^{\frac{d+2}{2}}\,dr\right|\\
&\lesssim\left|\int_{\phi(\left|\kappa\right|^{-2})^{-1}}^{2(s_{1}-\tau)^{\alpha}}
\left[\phi^{-1}(r^{-1})\right]^{\frac{d+2}{2}}\,dr\right|,
\end{align*}
we obtain:
\begin{align*}
J_{111}&\lesssim\left(s_{1}-\tau\right)^{-\alpha-1}
\int_{\varrho}^{\left[\phi^{-1}((s_{1}-\tau)^{-\alpha})\right]^{-\frac{1}{2}}}
\int_{\phi(\left|\kappa\right|^{-2})^{-1}}^{2(s_{1}-\tau)^{\alpha}}\int_{B_{3\kappa}(x)}
\left[\phi^{-1}(r^{-1})\right]^{\frac{d+2}{2}}\left|h(\tau,z)\right|^{p}\,dzdrd\kappa\\
&=\left(s_{1}-\tau\right)^{-\alpha-1}\int_{\phi(\left|\varrho\right|^{-2})^{-1}}^{2(s_{1}-\tau)^{\alpha}}
\int_{\varrho}^{\left[\phi^{-1}(r^{-1})\right]^{-\frac{1}{2}}}\left[\phi^{-1}(r^{-1})\right]^{\frac{d+2}{2}}
\kappa^{d}\left(\frac{1}{\kappa^{d}}\int_{B_{3\kappa}(x)}\left|h(\tau,z)\right|^{p}\,dz\right)\,d\kappa dr\\
&\lesssim\left(s_{1}-\tau\right)^{-\alpha-1}\int_{\phi(\left|\varrho\right|^{-2})^{-1}}^{2(s_{1}-\tau)^{\alpha}}
\int_{\varrho}^{\left[\phi^{-1}(r^{-1})\right]^{-\frac{1}{2}}}\left[\phi^{-1}(r^{-1})\right]^{\frac{d+2}{2}}
\kappa^{d}\left(\frac{1}{\kappa^{d}}\int_{B_{3\kappa}(x)}\left|h(\tau,z)\right|^{p}\,dz\right)\,d\kappa dr\\
&\lesssim\left(s_{1}-\tau\right)^{-\alpha-1}\int_{\phi(\left|\varrho\right|^{-2})^{-1}}^{2(s_{1}-\tau)^{\alpha}}
\left[\phi^{-1}(r^{-1})\right]^{\frac{1}{2}}\,dr\mathcal{M}_{x}\left|h\right|^{p}(\tau,x)\\
&\lesssim\left(s_{1}-\tau\right)^{-\alpha-1}\left(\phi(\varrho^{-2})\right)^{-\frac{1}{2}}\varrho^{-1}
\int_{\phi(\left|\varrho\right|^{-2})^{-1}}^{2(s_{1}-\tau)^{\alpha}}
r^{-\frac{1}{2}}\,dr\mathcal{M}_{x}\left|h\right|^{p}(\tau,x),
\end{align*}
thus,
\begin{align*}
J_{11}&\lesssim\left(\phi(\varrho^{-2})\right)^{-\frac{1}{2}}\varrho^{-1}\varrho
\int_{\mathcal{Q}_{\varrho}}\int_{\mathcal{Q}_{\varrho}}\int_{-\infty}^{-2\lambda(\varrho)}
\left(s_{1}-\tau\right)^{-\alpha-1}
\int_{\phi(\left|\varrho\right|^{-2})^{-1}}^{2(s_{1}-\tau)^{\alpha}}
r^{-\frac{1}{2}}\,dr\mathcal{M}_{x}\left|h\right|^{p}(\tau,x)\,d\tau\\
&\quad\,dz_{1}dz_{2}ds_{1}ds_{2}\\
&\lesssim\left(\phi(\varrho^{-2})\right)^{-\frac{1}{2}}\lambda(\varrho)\varrho^{2d}\int_{-\infty}^{-2\lambda(\varrho)}
\int_{-\lambda(\varrho)-\tau}^{-\tau}\left(s\right)^{-\alpha-1}\mathcal{M}_{x}\left|h\right|^{p}(\tau,x)\,dsd\tau\\
&\lesssim\left(\lambda(\varrho)\varrho^{d}\right)^{2}\mathcal{M}_{t}\mathcal{M}_{x}\left|h\right|^{p}(t,x).
\end{align*}
Now, we derive
\begin{align*}
\fint_{\mathcal{Q}_{\varrho}}\fint_{\mathcal{Q}_{\varrho}}
\left|\mathcal{G}_{1}h(s_{1},z_{1})-\mathcal{G}_{1}h(s_{1},z_{2})\right|^{p}\,dz_{1}dz_{2}ds_{1}ds_{2}\lesssim
\mathcal{M}_{t}\mathcal{M}_{x}\left|h\right|^{p}(t,x).
\end{align*}

Next we estimate $I_{2}$. Using H\"{o}lder's inequality:
\begin{align*}
I_{2}&\lesssim\int_{\mathcal{Q}_{\varrho}}\int_{\mathcal{Q}_{\varrho}}J^{p-1}\int_{-\infty}^{-2\lambda(\varrho)}\int_{\mathbb{R}^{d}}
\left|\mathcal{S}_{\alpha,1+\alpha,\phi}(s_{1}-\tau,z_{2}-y)-\mathcal{S}_{\alpha,1+\alpha,\phi}(s_{2}-\tau,z_{2}-y)\right|
\left|h(\tau,y)\right|^{p}\\
&\quad\,dyd\tau dz_{1}dz_{2}ds_{1}ds_{2},
\end{align*}
where
\begin{align*}
J=\int_{-\infty}^{-2\lambda(\varrho)}\int_{\mathbb{R}^{d}}
\left|\mathcal{S}_{\alpha,1+\alpha,\phi}(s_{1}-\tau,z_{2}-y)-\mathcal{S}_{\alpha,1+\alpha,\phi}(s_{2}-\tau,z_{2}-y)\right|\,dyd\tau.
\end{align*}
By Proposition \ref{Pro.soltioun function} and without loss of generality assuming $s_{1}>s_{2}$:
\begin{align*}
J&\lesssim\int_{-\infty}^{-2\lambda(\varrho)}\int_{\mathbb{R}^{d}}\int_{s_{1}}^{s_{2}}
\left|\mathcal{S}_{\alpha,2+\alpha,\phi}(s-\tau,z_{2}-y)\right|\,dsdyd\tau\\
&=\int_{s_{1}}^{s_{2}}\int_{-\infty}^{-2\lambda(\varrho)}\left(s-\tau\right)^{-2}\,d\tau ds\\
&\lesssim \frac{1}{\lambda(\varrho)}\left(s_{1}-s_{2}\right)\\
&\leq C(\alpha,d,p,\delta_{0}).
\end{align*}
Let $\tilde{s}=\theta s_{1}+(1-\theta)s_{2}$ for any $0<\theta<1$. Note that $z_{2}-y\in B_{2\varrho}$ for $y\in B_{\varrho}$, $\tilde{s}-\tau\sim s_{1}-\tau$ for $s_{1},s_{2}\in (-\lambda(\varrho),0)$, $\tau\in(-\infty,-2\lambda(\varrho))$, and
\begin{align*}
B_{\kappa}(y)\subset B_{2\kappa+\varrho}(x)\subset B_{3\kappa}(x) \text{ for }\kappa\geq \varrho, \left|y\right|\leq \kappa,
\end{align*}
thus, combine \eqref{Pro.Convergence} we obtain:
\begin{align*}
I_{2}&\lesssim\int_{\mathcal{Q}_{\varrho}}\int_{\mathcal{Q}_{\varrho}}\int_{-\infty}^{-2\lambda(\varrho)}
\int_{\mathbb{R}^{d}}\left|s_{1}-s_{2}\right|
\left|\mathcal{S}_{\alpha,2+\alpha,\phi}(\tilde{s}-\tau,y)\right|\left|h(\tau,z_{2}-y)\right|^{p}
\,dyd\tau dz_{1}dz_{2}ds_{1}ds_{2}\\
&\lesssim\lambda(\varrho)\int_{\mathcal{Q}_{\varrho}}\int_{\mathcal{Q}_{\varrho}}\int_{-\infty}^{-2\lambda(\varrho)}
\int_{B_{\varrho}^{c}}\left|\tilde{s}-\tau\right|^{\alpha-2}\frac{\phi(|y|^{-2})}{|y|^{d}}\left|h(\tau,z_{2}-y)\right|^{p}
\,dyd\tau dz_{1}dz_{2}ds_{1}ds_{2}\\
&\lesssim \lambda(\varrho)\int_{\mathcal{Q}_{\varrho}}\int_{\mathcal{Q}_{\varrho}}\int_{-\infty}^{-2\lambda(\varrho)}
\left|\tilde{s}-\tau\right|^{\alpha-2}
\int_{\varrho}^{\infty}\frac{\phi(|\kappa|^{-2})}{\left|\kappa\right|^{d+1}}\int_{B_{3\kappa}(x)}\left|h(\tau,z)\right|^{p}\,dzd\kappa d\tau dz_{1}dz_{2}ds_{1}ds_{2}\\
&\lesssim\left(\lambda(\varrho)\varrho^{d}\right)^{2}\int_{-\infty}^{-2\lambda(\varrho)}\int_{-\lambda(\varrho)-\tau}^{-\tau}
s_{1}^{\alpha-2}\int_{\varrho}^{\infty}\frac{\phi(|\kappa|^{-2})}{|\kappa|}\frac{1}{\left|\kappa\right|^{d}}
\int_{B_{3\kappa}(x)}\left|h(\tau,z)\right|^{p}\,dzd\kappa ds_{1}d\tau\\
&\lesssim\left(\lambda(\varrho)\varrho^{d}\right)^{2}(\lambda(\varrho))^{\alpha-1}\int_{\varrho}^{\infty}
\frac{\phi(|\kappa|^{-2})}{|\kappa|}\int_{-2\lambda(\varrho)}^{0}\frac{1}{\left|\kappa\right|^{d}}
\int_{B_{3\kappa}(x)}\left|h(\tilde{\tau},z)\right|^{p}\,dzd\tilde{\tau} d\kappa\\
&\quad +\left(\lambda(\varrho)\varrho^{d}\right)^{2}\int_{-\infty}^{-2\lambda(\varrho)}\int_{\varrho}^{\infty}
\frac{\phi(|\kappa|^{-2})}{|\kappa|}\int_{\tau}^{0}\frac{1}{|\kappa|^{d}}
\int_{B_{3\kappa}(x)}\left|h(\tilde{\tau},z)\right|^{p}\,dzd\tilde{\tau} d\kappa\\
&\qquad\left((-\lambda(\varrho)-\tau)^{\alpha-2}-(-\tau)^{\alpha-2}\right)\,d\tau\\
&\lesssim\left(\lambda(\varrho)\varrho^{d}\right)^{2}\mathcal{M}_{t,x}\left|h\right|^{p}(t,x).
\end{align*}
In conclusion, we obtain:
\begin{align*}
&\fint_{\mathcal{Q}_{\varrho}}\fint_{\mathcal{Q}_{\varrho}}
|\mathcal{G}_{1}h(s_{1},z_{1})-\mathcal{G}_{1}h(s_{2},z_{2})|^{p}\,dz_{1}dz_{2}ds_{1}ds_{2}\\
&\lesssim\frac{1}{\lambda(\varrho)^{2}\varrho^{2d}}(I_{1}+I_{2})\lesssim\mathcal{M}_{t,x}|h|^{p}(t,x)
+\mathcal{M}_{t}\mathcal{M}_{x}|h|^{p}(t,x),
\end{align*}
where the constant depends on $\alpha,p,d,\delta_{0}$.

For $\mathcal{G}_{0}$, the estimate is similar to $\mathcal{G}_{1}$. Following the same steps as for $\mathcal{G}_{1}$, but now using the estimates for $\mathcal{S}_{\alpha,1,\phi}$ from Proposition \ref{Pro.soltioun function}, the constant depends on $\alpha,p,d,\delta_{0},T$.
\end{proof}
\begin{lemma}\label{supp estimate 5}
Let $T<\infty$, $\varrho>0$, $1<p<\infty$, $(t_{0},x_{0})\in\mathbb{R}\times\mathbb{R}^{d}$, and $h\in C_{c}^{\infty}\left(\mathbb{R}\times\mathbb{R}^{d}\right)$. Then for any $(t,x)\in\mathcal{Q}_{\varrho}(t_{0},x_{0})$, the following estimates hold:
\begin{align}
\label{supp estimate 5.1}
\fint_{\mathcal{Q}_{\varrho}(t_{0},x_{0})}\left|\mathcal{G}_{0}h(s,z)-\left(\mathcal{G}_{0}h\right)
_{\mathcal{Q}_{\varrho}(t_{0},x_{0})}\right|^{p}\,dzds&\lesssim_{T}\mathcal{M}_{t,x}\left|h\right|^{p}(t,x)
+\mathcal{M}_{t}\mathcal{M}_{x}\left|h\right|^{p}(t,x),\\
\fint_{\mathcal{Q}_{\varrho}(t_{0},x_{0})}\left|\mathcal{G}_{1}h(s,z)-\left(\mathcal{G}_{1}h\right)
_{\mathcal{Q}_{\varrho}(t_{0},x_{0})}\right|^{p}\,dzds&\lesssim\mathcal{M}_{t,x}\left|h\right|^{p}(t,x)
+\mathcal{M}_{t}\mathcal{M}_{x}\left|h\right|^{p}(t,x),
\label{supp estimate 5.2}\\
\left(\mathcal{G}_{0}h\right)^{\sharp}(t,x)&\lesssim_{T}\left(\mathcal{M}_{t,x}\left|h\right|^{p}(t,x)\right)^{\frac{1}{p}}
+\left(\mathcal{M}_{t}\mathcal{M}_{x}\left|h\right|^{p}(t,x)\right)^{\frac{1}{p}},
\label{supp estimate 5.3}\\
\left(\mathcal{G}_{1}h\right)^{\sharp}(t,x)&\lesssim\left(\mathcal{M}_{t,x}\left|h\right|^{p}(t,x)\right)^{\frac{1}{p}}
+\left(\mathcal{M}_{t}\mathcal{M}_{x}\left|h\right|^{p}(t,x)\right)^{\frac{1}{p}}.
\label{supp estimate 5.4}
\end{align}
\end{lemma}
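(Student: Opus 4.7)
The plan is to combine the three support-localized lemmas (Lemmas \ref{supp estimate 2}, \ref{supp estimate 3}, \ref{supp estimate 4}) via a tripartite decomposition of $h$, first establishing the oscillation estimates \eqref{supp estimate 5.1}--\eqref{supp estimate 5.2} and then deducing the sharp maximal bounds \eqref{supp estimate 5.3}--\eqref{supp estimate 5.4} as an immediate corollary. Since $\mathcal{G}_{0}$ and $\mathcal{G}_{1}$ are space-time convolution operators, they commute with translations, so without loss of generality we take $(t_{0},x_{0})=(0,0)$ and work on $\mathcal{Q}_{\varrho}$, which matches the setup of Lemmas \ref{supp estimate 2}--\ref{supp estimate 4}.

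We split $h=h_{1}+h_{2}+h_{3}$ with
\[
h_{1}=h\,\mathbf{1}_{\{t>-3\lambda(\varrho)\}},\quad h_{2}=h\,\mathbf{1}_{\{t\leq-3\lambda(\varrho),\,|x|\leq 2\varrho\}},\quad h_{3}=h\,\mathbf{1}_{\{t\leq-3\lambda(\varrho),\,|x|>2\varrho\}}.
\]
These supports fit precisely the hypotheses of Lemmas \ref{supp estimate 2}, \ref{supp estimate 3}, \ref{supp estimate 4}, respectively (using $B_{2\varrho}\subset B_{3\varrho}$ for the middle piece). Although each $h_{i}$ is merely bounded and measurable, the three lemmas extend by routine mollification from $C_{c}^{\infty}$ to such truncations because their right-hand sides involve only $L^{p}$ averages of $h$, and the operators $\mathcal{G}_{k}$ remain well-defined thanks to the kernel integrability of $\mathcal{S}_{\alpha,1,\phi}$ and $\mathcal{S}_{\alpha,1+\alpha,\phi}$ in Proposition \ref{Pro.soltioun function}.

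By linearity, $\mathcal{G}_{k}h=\mathcal{G}_{k}h_{1}+\mathcal{G}_{k}h_{2}+\mathcal{G}_{k}h_{3}$ for $k\in\{0,1\}$. For $i=1,2$, Lemmas \ref{supp estimate 2} and \ref{supp estimate 3}, combined with $|h_{i}|\leq|h|$ and the elementary inequality $\fint_{\mathcal{Q}}|f-(f)_{\mathcal{Q}}|^{p}\leq 2^{p}\fint_{\mathcal{Q}}|f|^{p}$, yield
\[
\fint_{\mathcal{Q}_{\varrho}}\bigl|\mathcal{G}_{k}h_{i}-(\mathcal{G}_{k}h_{i})_{\mathcal{Q}_{\varrho}}\bigr|^{p}\,dz\,ds \leq C\,\mathcal{M}_{t,x}|h|^{p}(t,x).
\]
For $h_{3}$, Jensen's inequality applied to $f(\zeta)-(f)_{\mathcal{Q}}=\fint_{\mathcal{Q}}\bigl(f(\zeta)-f(\eta)\bigr)\,d\eta$ bounds the single-variable oscillation by the double-variable oscillation, and Lemma \ref{supp estimate 4} then controls the latter by $C\bigl(\mathcal{M}_{t,x}|h|^{p}+\mathcal{M}_{t}\mathcal{M}_{x}|h|^{p}\bigr)(t,x)$. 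Combining the three contributions via $|a_{1}+a_{2}+a_{3}|^{p}\leq 3^{p-1}\sum_{i}|a_{i}|^{p}$ delivers \eqref{supp estimate 5.1}--\eqref{supp estimate 5.2}.

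For the sharp maximal bounds \eqref{supp estimate 5.3}--\eqref{supp estimate 5.4}, fix $(t,x)$ and let $\mathcal{Q}=\mathcal{Q}_{\varrho'}(t',x')$ be any cylinder containing $(t,x)$. Applying the oscillation estimate already established to $\mathcal{Q}$ and invoking Jensen in the form $\fint|\cdot|\leq(\fint|\cdot|^{p})^{1/p}$ gives
\[
\fint_{\mathcal{Q}}\bigl|\mathcal{G}_{k}h-(\mathcal{G}_{k}h)_{\mathcal{Q}}\bigr|\,dz\,ds\leq C\Bigl((\mathcal{M}_{t,x}|h|^{p}(t,x))^{\frac{1}{p}}+(\mathcal{M}_{t}\mathcal{M}_{x}|h|^{p}(t,x))^{\frac{1}{p}}\Bigr)
\]
uniformly in $\mathcal{Q}$, and taking the supremum over all $\mathcal{Q}\ni(t,x)$ produces \eqref{supp estimate 5.3}--\eqref{supp estimate 5.4}. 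The main technical subtlety is the density extension of Lemmas \ref{supp estimate 2}--\ref{supp estimate 4} from $C_c^\infty$ to the sharp truncations $h_{i}$; this is routine given the kernel integrability from Proposition \ref{Pro.soltioun function}, but it is the only nontrivial bookkeeping step in the argument.
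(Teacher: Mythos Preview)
Your proposal is correct and follows essentially the same route as the paper: reduce to $(t_0,x_0)=(0,0)$ by translation invariance, split $h$ into three pieces matching the support hypotheses of Lemmas~\ref{supp estimate 2}--\ref{supp estimate 4}, and then pass to the sharp function via H\"older/Jensen. The only difference is that the paper uses smooth cutoffs $\eta\in C^\infty(\mathbb{R})$ and $\zeta\in C^\infty(\mathbb{R}^d)$ (with transition zones in $(-\tfrac{8}{3}\lambda(\varrho),-\tfrac{7}{3}\lambda(\varrho))$ and $B_{8\varrho/3}\setminus B_{7\varrho/3}$) so that each $h_i$ remains in $C_c^\infty$, thereby avoiding the mollification/density step you flag at the end.
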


\begin{proof}
It suffices to prove \eqref{supp estimate 5.2} and \eqref{supp estimate 5.4}, as the proofs of \eqref{supp estimate 5.1}
and \eqref{supp estimate 5.3} are analogous.

First observe that
\begin{align*}
\mathcal{G}_{1}h(t+t_{0},x+x_{0})&=\int_{-\infty}^{t+t_{0}}\int_{\mathbb{R}^{d}}\mathcal{S}_{\alpha,1+\alpha,\phi}
\left(t+t_{0}-\tau,x+x_{0}-z\right)h(\tau,z)\,dzd\tau\\
&=\int_{-\infty}^{t}\int_{\mathbb{R}^{d}}\mathcal{S}_{\alpha,1+\alpha,\phi}
\left(t-\tau,x-z\right)\tilde{h}(\tau,z)\,dzd\tau,
\end{align*}
where $\tilde{h}(t,x)=h\left(t+t_{0},x+x_{0}\right)$. Consequently, we have:
\begin{align*}
\fint_{\mathcal{Q}_{\varrho}(t_{0},x_{0})}\left|\mathcal{G}_{1}h(s,z)-\left(\mathcal{G}_{1}h\right)
_{\mathcal{Q}_{\varrho}(t_{0},x_{0})}\right|\,dzds&=\fint_{\mathcal{Q}_{\varrho}(0,0)}
\left|\mathcal{G}_{1}\tilde{h}(s,z)-\left(\mathcal{G}_{1}\tilde{h}\right)
_{\mathcal{Q}_{\varrho}(0,0)}\right|\,dzds.
\end{align*}

Without loss of generality, we may assume $(t_{0},x_{0})=\left(0,0\right)$. Let $\zeta\in C^{\infty}\left(\mathbb{R}^{d}\right)$ and $\eta\in C^{\infty}\left(\mathbb{R}\right)$ be cutoff functions satisfying $0\leq \zeta\leq 1$, $0\leq\eta\leq 1$ with
\begin{align*}
\zeta=
\begin{cases}
1 & \text{on } B_{\frac{7\varrho}{3}}\\
0 & \text{on } B^{c}_{\frac{8\varrho}{3}}
\end{cases},\quad
\eta=
\begin{cases}
1 & \text{on } (\frac{-7\lambda(\varrho)}{3},\infty)\\
0 & \text{on } (-\infty,\frac{-8\lambda(\varrho)}{3})
\end{cases}.
\end{align*}

We decompose $h$ as follows:
$$
h=h\eta+h(1-\eta)\zeta+h(1-\eta)(1-\zeta)\triangleq h_{1}+h_{2}+h_{3}.
$$

By the linearity of $\mathcal{G}_{1}$ and the fact that $|h_{i}|\leq |h|$ for $i=1,2,3$, we obtain
\begin{align*}
\left|\mathcal{G}_{1}h-\left(\mathcal{G}_{1}h\right)_{\mathcal{Q}_{\varrho}}\right|\leq\sum_{i=1}^{3}
\left|\mathcal{G}_{1}h_{i}-\left(\mathcal{G}_{1}h_{i}\right)_{\mathcal{Q}_{\varrho}}\right|.
\end{align*}

Applying Lemmas \ref{supp estimate 1}-\ref{supp estimate 4} to each term yields the estimate \eqref{supp estimate 5.2}.

Moreover, using H\"{o}lder's inequality, we derive the sharp function estimate:
\begin{align*}
\left(\mathcal{G}_{1}h\right)^{\sharp}(t,x)&=\sup_{\substack{(t,x)\in\mathcal{Q}_{\varrho}\\ \varrho>0}}\fint_{\mathcal{Q}_{\varrho}}
\left|\mathcal{G}_{1}h(s,z)-\left(\mathcal{G}_{1}h\right)_{\mathcal{Q}_{\varrho}}\right|\,dzds\\
&\lesssim\sup_{\substack{(t,x)\in\mathcal{Q}_{\varrho}\\ \varrho>0}}
\left(\fint_{\mathcal{Q}_{\varrho}}
\left|\mathcal{G}_{1}h(s,z)-\left(\mathcal{G}_{1}h\right)_{\mathcal{Q}_{\varrho}}\right|^{p}\,dzds\right)^{\frac{1}{p}}\\
&\lesssim\left(\mathcal{M}_{t,x}\left|h\right|^{p}(t,x)\right)^{\frac{1}{p}}
+\left(\mathcal{M}_{t}\mathcal{M}_{x}\left|h\right|^{p}(t,x)\right)^{\frac{1}{p}},
\end{align*}
which establishes \eqref{supp estimate 5.4}.
The estimates for $\mathcal{G}_{0}$ in \eqref{supp estimate 5.1} and \eqref{supp estimate 5.3} follow similarly, with the constants now additionally depending on $T$ due to the time truncation in the definition of $\mathcal{G}_{0}$.
\end{proof}
\section{Main Result}
We establish the main theorem of this paper.

\begin{theorem}\label{Theorem 1}
For $w_{0}\equiv 0$, $\gamma\in\mathbb{R}$ let $0<\alpha<1$, $0<T<\infty$, $1<p,q<\infty$, $\mu_{1}(\cdot)\in A_{p}(\mathbb{R}^{d})$, $\mu_{2}(\cdot)\in A_{q}(\mathbb{R})$ and $h\in\mathcal{H}^{\phi,\gamma}_{p,q}\left(\mu_{1},\mu_{2},T\right)$. Then the time-space fractional equation (TSFEs) \eqref{TSFE} has a unique solution $w$ in $\mathcal{H}^{\alpha,\phi,\gamma+2}_{p,q,0}\left(\mu_{1},\mu_{2},T\right)$, satisfying:
\begin{align*}
\left\|\phi(\Delta)w\right\|_{\mathcal{H}^{\phi,\gamma}_{p,q}\left(\mu_{1},\mu_{2},T\right)}&\leq C_{0}\left\|h\right\|_{\mathcal{H}^{\phi,\gamma}_{p,q}\left(\mu_{1},\mu_{2},T\right)},\\
\left\|\partial^{\alpha}_{t}w\right\|_{\mathcal{H}^{\phi,\gamma}_{p,q}\left(\mu_{1},\mu_{2},T\right)}
+\left\|\phi(\Delta)w\right\|_{\mathcal{H}^{\phi,\gamma}_{p,q}\left(\mu_{1},\mu_{2},T\right)}&\leq C_{1}\left\|h\right\|_{\mathcal{H}^{\phi,\gamma}_{p,q}\left(\mu_{1},\mu_{2},T\right)},
\end{align*}
where $C_{0}=C_{0}\left(\alpha,p,q,d,[\mu_{1}]_{p},[\mu_{2}]_{q}\right)$ and $C_{1}=C_{1}\left(\alpha,p,q,d,[\mu_{1}]_{p},[\mu_{2}]_{q},T\right)$.
\end{theorem}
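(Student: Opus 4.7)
The plan is to combine the sharp maximal function estimates obtained in Section 3 with the Fefferman-Stein and Hardy-Littlewood theorems in weighted mixed-norm spaces, then bootstrap via the equation itself. First I would reduce to smooth data: by Lemma \ref{Pro.solution space}(iii), $C_c^\infty((0,\infty)\times\mathbb{R}^d)$ is dense in $\mathcal{H}^{\alpha,\phi,2}_{p,q,0}(\mu_1,\mu_2,T)$, and it is standard that $C_c^\infty$ is also dense in $\mathcal{L}_{p,q}(\mu_1,\mu_2,T)$ (Remark \ref{Ap weight proposition}). So it suffices to establish the a priori estimate for $h\in C_c^\infty((0,\infty)\times\mathbb{R}^d)$ and define $w=\mathscr{L}_0 h$ as in Lemma \ref{Solution Resp}; then Lemma \ref{Solution Resp}(ii) gives that $w$ solves \eqref{TSFE} pointwise, and one recognizes $\phi(\Delta)w=\mathscr{L} h=\mathcal{G}_1 h$ on $(0,T)\times\mathbb{R}^d$ (with the trivial extension of $h$ by zero to $t<0$), while $w=\mathcal{G}_0 h$ on the same set.

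The heart of the proof is the a priori estimate $\|\phi(\Delta)w\|_{\mathcal{L}_{p,q}(\mu_1,\mu_2,T)}\le C\|h\|_{\mathcal{L}_{p,q}(\mu_1,\mu_2,T)}$. Using Remark \ref{Ap weight proposition}(i), I pick $p_0\in(1,\infty)$ small enough so that $\mu_1\in A_{p/p_0}(\mathbb{R}^d)$ and $\mu_2\in A_{q/p_0}(\mathbb{R})$ simultaneously. The Fefferman-Stein inequality in weighted mixed-norm spaces then gives
\begin{align*}
\|\phi(\Delta)w\|_{\mathcal{L}_{p,q}(\mu_1,\mu_2,T)}
\lesssim \|(\mathcal{G}_1 h)^{\sharp}\|_{\mathcal{L}_{p,q}(\mu_1,\mu_2,T)}.
\end{align*}
By the sharp function estimate \eqref{supp estimate 5.4} of Lemma \ref{supp estimate 5},
\begin{align*}
(\mathcal{G}_1 h)^{\sharp}(t,x)\lesssim \bigl(\mathcal{M}_{t,x}|h|^{p_0}(t,x)\bigr)^{1/p_0}+\bigl(\mathcal{M}_t\mathcal{M}_x|h|^{p_0}(t,x)\bigr)^{1/p_0}.
\end{align*}
Raising to the $p$-th power and integrating in $x$ against $\mu_1$, then to the $q/p$-th power and integrating in $t$ against $\mu_2$, one invokes the weighted Hardy-Littlewood maximal theorem in $L_{p/p_0}(\mu_1)$ (for $\mathcal{M}_x$) and in $L_{q/p_0}(\mu_2)$ (for $\mathcal{M}_t$), together with Fubini, to collapse the right-hand side down to $\||h|^{p_0}\|_{\mathcal{L}_{p/p_0,q/p_0}(\mu_1,\mu_2,T)}^{1/p_0}=\|h\|_{\mathcal{L}_{p,q}(\mu_1,\mu_2,T)}$. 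This yields the desired bound for $\phi(\Delta)w$ with a constant $C_0$ depending only on $\alpha,p,q,d,[\mu_1]_p,[\mu_2]_q$.

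The remaining norms then follow cheaply. Since $w$ solves \eqref{TSFE} pointwise, the triangle inequality gives $\|\partial_t^\alpha w\|_{\mathcal{L}_{p,q}}\le \|\phi(\Delta)w\|_{\mathcal{L}_{p,q}}+\|h\|_{\mathcal{L}_{p,q}}$; for $\|w\|_{\mathcal{L}_{p,q}}$ I would run the identical Fefferman-Stein/Hardy-Littlewood scheme using instead the $\mathcal{G}_0$ sharp bound \eqref{supp estimate 5.3}, whose constant carries an extra $T$-dependence coming from the finite-horizon truncation $\mathbf{1}_{0<t-\tau<T}$ built into $\mathcal{G}_0$. Adding these three contributions produces $C_1=C_1(\alpha,p,q,d,[\mu_1]_p,[\mu_2]_q,T)$. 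Existence for general $h\in\mathcal{L}_{p,q}(\mu_1,\mu_2,T)$ follows by approximating $h$ by $h_n\in C_c^\infty$, defining $w_n=\mathscr{L}_0 h_n$, and using the a priori estimate to show $\{w_n\}$ is Cauchy in $\mathcal{H}^{\alpha,\phi,2}_{p,q,0}(\mu_1,\mu_2,T)$; uniqueness is immediate from the a priori estimate applied to the difference of two solutions with $h=0$.

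The main obstacle is the bookkeeping in the weighted mixed-norm Fefferman-Stein/Hardy-Littlewood step: one must verify that the weight $(\mu_1,\mu_2)$ indeed lies in the correct Muckenhoupt class after shrinking the exponents from $(p,q)$ to $(p/p_0,q/p_0)$, and that the parabolic cylinders $\mathcal{Q}_\varrho$ used to define the sharp function $h^\sharp$ are compatible with the product structure required for $\mathcal{M}_t\mathcal{M}_x$ (as opposed to $\mathcal{M}_{t,x}$). Both issues are handled by Remark \ref{Ap weight proposition}(i) and by the well-known equivalence (for product domains) between the parabolic sharp function and the separate iterated maximal operators on weighted $L_q(L_p)$, so no additional technical input beyond the machinery already assembled in Sections 2 and 3 is needed.
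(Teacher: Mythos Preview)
Your proposal is correct and follows essentially the same route as the paper: reduce to smooth compactly supported data, identify $w=\mathcal{G}_0 h$ and $\phi(\Delta)w=\mathcal{G}_1 h$, use the openness of $A_p$ weights to choose $p_0>1$ with $\mu_1\in A_{p/p_0}$ and $\mu_2\in A_{q/p_0}$, then combine Fefferman--Stein with the sharp function bounds of Lemma \ref{supp estimate 5} and the weighted Hardy--Littlewood maximal theorem, and finish by approximation. The only cosmetic difference is that the paper introduces an explicit time cutoff $\chi_k$ (with $\chi_k=1$ on $[0,T]$) before passing to the full-space Fefferman--Stein inequality and then sends $k\to\infty$, whereas you implicitly rely on $h$ already having compact support; both devices serve the same purpose.
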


\begin{proof}
We only to verify $\gamma=0$.

First, we prove the a priori estimates and uniqueness.

Let $w\in\mathcal{H}^{\alpha,\phi,2}_{p,q,0}\left(\mu_{1},\mu_{2},T\right)$ be a solution to TSFE \eqref{TSFE}. There exists a sequence $w_{n}(t,x)\in C_{c}^{\infty}\left((0,\infty)\times\mathbb{R}^{d}\right)$ such that
\begin{align*}
w_{n}\rightarrow w\text{ in }\mathcal{H}^{\alpha,\phi,2}_{p,q,0}\left(\mu_{1},\mu_{2},T\right).
\end{align*}
Let $h_{n}(t,x)=\partial^{\alpha}_{t}w_{n}(t,x)-\phi(\Delta)w_{n}(t,x)$. Then, by Lemma \ref{Solution Resp}, we have
\begin{align*}
w_{n}(t,x)=\int_{0}^{t}\int_{\mathbb{R}^{d}}\mathcal{S}_{\alpha,1,\phi}(t-\tau,x-z)h_{n}(\tau,z)\,dzd\tau,\quad 0<\tau<t<T.
\end{align*}
This implies $w_{n}=\mathcal{G}_{0}h_{n}$ and $\phi(\Delta)w_{n}=\mathcal{G}_{1}h_{n}$ by extending $h(\tau,x)=0$ for $\tau<0$.

Let $\chi_{k}\in C^{\infty}(\mathbb{R})$ satisfy $0\leq\chi_{k}\leq 1$, $\chi_{k}=1$ for $t\leq T$, and $\chi_{k}=0$ for $t\geq T+1/k$.

Given $\mu_1 \in A_p(\mathbb{R}^d)$ and $\mu_2 \in A_q(\mathbb{R})$, Remark~\ref{Ap weight proposition} yields exponents $p^* \in (1,p)$ and $q^* \in (1,q)$ with $\mu_1 \in A_{p^*}$, $\mu_2 \in A_{q^*}$. We select $p_0 > 1$ satisfying
\begin{align*}
p^* < \frac{p}{p_0} < p, \quad q^* < \frac{q}{q_0} < q,
\end{align*}
which implies $\mu_{1}\in A_{\frac{p}{p_{0}}}$ and $\mu_{2}\in A_{\frac{q}{q_{0}}}$.

For $i=0,1$, by using the \emph{Fefferman-Stein} theorem (see \cite{Grafakos}) , Lemma \ref{supp estimate 5} and a version of the Hardy-Littlewood theorem \cite[Corollary 2.6]{Dong 3}, we derive:
\begin{align*}
&\left\|\mathcal{G}_{i}\left(h_{n}\chi_{k}\right)\right\|_{\mathcal{L}_{p,q}\left(\mu_{1},\mu_{2}\right)}\\
&\lesssim
\left\|\left(\mathcal{G}_{i}\left(h_{n}\chi_{k}\right)\right)^{\sharp}\right\|_{\mathcal{L}_{p,q}\left(\mu_{1},\mu_{2}\right)}\\
&\leq C_{i}\left\|\left(\mathcal{M}_{t,x}\left|h_{n}\chi_{k}\right|^{p_{0}}\right)^{\frac{1}{p_{0}}}
+\left(\mathcal{M}_{t}\mathcal{M}_{x}\left|h_{n}\chi_{k}\right|^{p_{0}}\right)^{\frac{1}{p_{0}}}\right\|_{\mathcal{L}_{p,q}
\left(\mu_{1},\mu_{2}\right)}\\
&\leq C_{i}\left\|\mathcal{M}_{t,x}\left|h_{n}\chi_{k}\right|^{p_{0}}\right\|^{\frac{1}{p_{0}}}
_{L_{q/p_{0}}\left((0,\infty),\mu_{2}dt;L_{p/p_{0}}(\mu_{1})\right)}
+C_{i}\left\|\mathcal{M}_{t}\mathcal{M}_{x}\left|h_{n}\chi_{k}\right|^{p_{0}}\right\|^{\frac{1}{p_{0}}}
_{L_{q/p_{0}}\left((0,\infty),\mu_{2}dt;L_{p/p_{0}}(\mu_{1})\right)}.
\end{align*}
Applying the weighted \emph{Hardy-Littlewood maximal} function (see \cite{Grafakos}) to $x$ and $t$, we obtain:
\begin{align*}
\left\|\mathcal{G}_{i}\left(h_{n}\chi_{k}\right)\right\|_{\mathcal{L}_{p,q}\left(\mu_{1},\mu_{2}\right)}\leq C_{i}\left\|h_{n}\chi_{k}\right\|_{\mathcal{L}_{p,q}\left(\mu_{1},\mu_{2}\right)},
\end{align*}
where $C_{0}=C_{0}(\alpha,p,q,d,[\mu_{1}]_{p},[\mu_{2}]_{q},T)$ and $C_{1}=C_{1}(\alpha,p,q,d,[\mu_{1}]_{p},[\mu_{2}]_{q})$.

Thus, we have:
\begin{align*}
\left\|\mathcal{G}_{i}h_{n}\right\|_{\mathcal{L}_{p,q}\left(\mu_{1},\mu_{2},T\right)}
=\left\|\mathcal{G}_{i}\left(h_{n}\chi_{k}\right)\right\|_{\mathcal{L}_{p,q}\left(\mu_{1},\mu_{2}\right)}\leq C_{i}\left\|h_{n}\chi_{k}\right\|_{\mathcal{L}_{p,q}\left(\mu_{1},\mu_{2}\right)}.
\end{align*}
Taking $k\rightarrow\infty$, we conclude:
\begin{align}\label{prior estimate}
\left\|\mathcal{G}_{i}h_{n}\right\|_{\mathcal{L}_{p,q}\left(\mu_{1},\mu_{2},T\right)}\leq C_{i}\left\|h_{n}\right\|_{\mathcal{L}_{p,q}\left(\mu_{1},\mu_{2},T\right)}.
\end{align}

Note that $w_{n}=\mathcal{G}_{0}h_{n}$, $\phi(\Delta)w_{n}=\mathcal{G}_{1}h_{n}$, and $\partial^{\alpha}_{t}w_{n}=\phi(\Delta)w_{n}+h_{n}$. Therefore,
\begin{align*}
\left\|w\right\|_{\mathcal{H}^{\alpha,\phi,2}_{p,q}(\mu_{1},\mu_{2},T)}&=\lim_{n\rightarrow\infty}
\left\|w_{n}\right\|_{\mathcal{H}^{\alpha,\phi,2}_{p,q}(\mu_{1},\mu_{2},T)}\\
&=\lim_{n\rightarrow\infty}\left(\left\|\partial^{\alpha}_{t}w_{n}\right\|_{\mathcal{L}_{p,q}(\mu_{1},\mu_{2},T)}
+\left\|\phi(\Delta)w_{n}\right\|_{\mathcal{L}_{p,q}(\mu_{1},\mu_{2},T)}\right)\\
&\leq C\left\|h\right\|_{\mathcal{L}_{p,q}(\mu_{1},\mu_{2},T)},
\end{align*}
where  $C=\left(\alpha,p,q,d,[\mu_{1}]_{p},[\mu_{2}]_{q},T\right)$.

Finally, we verify the existence of the solution. Since $h\in \mathcal{L}_{p,q}(\mu_{1},\mu_{2},T)$, there exists a sequence $h_{n}\in C^{\infty}_{c}((0,\infty)\times\mathbb{R}^{d})$ such that $h_{n}\rightarrow h$ in $\mathcal{L}_{p,q}(\mu_{1},\mu_{2},T)$. Define
\begin{align*}
w_{n}(t,x)=\int_{0}^{t}\int_{\mathbb{R}^{d}}\mathcal{S}_{\alpha,1,\phi}(t-\tau,x-y)h_{n}(\tau,y)\,dyd\tau.
\end{align*}
Then $w_{n}$ satisfies TSFEs \eqref{TSFE} for any $n\in\mathbb{N}^{+}$. By the a priori estimate \eqref{prior estimate}, for any $m,k$, we have
\begin{align*}
\left\|w_{m}-w_{k}\right\|_{\mathcal{H}^{\alpha,\phi,2}_{p,q}}\leq C\left\|h_{m}-h_{k}\right\|_{\mathcal{L}_{p,q}\left(\mu_{1},\mu_{2},T\right)}.
\end{align*}
This shows that $\left\{w_{n}\right\}$ is a Cauchy sequence. Taking $n\rightarrow\infty$, we obtain the limit $w\in\mathcal{H}^{\alpha,\phi,2}_{p,q}$, which satisfies TSFEs \eqref{TSFE}. The proof is complete.
\end{proof}

\begin{remark}
It is clear that Theorem~\ref{Theorem 1} generalizes the result of Han \cite{Han}, which only deals with the case $\phi(\Delta)=\Delta$.
\end{remark}

Next, we consider the case of non-zero initial data. For $0<T<\infty$, consider a weight function $\mu_{2}^{\circ}\in A_{q}\left(0,T\right)$, i.e., $\mu_{2}^{\circ}$ is locally integrable and satisfies
\begin{align*}
\sup_{0\leq b<c\leq T}\left(\frac{1}{c-b}\int_{b}^{c}\mu_{2}^{\circ}(t)\,dt\right)\left(\frac{1}{c-b}
\int_{b}^{c}\mu_{2}^{\circ}(t)^{-\frac{1}{q-1}}\,dt\right)^{q-1}<\infty.
\end{align*}
Note that $k^{\circ}(t):=t^{-\alpha}$ is right continuous and decreasing on $\left(0,T\right)$, and there exists $\varepsilon>0$ such that
\[
\lambda^{-1+\varepsilon}\lesssim\frac{k^{\circ}(\lambda t)}{k^{\circ}(t)}\lesssim \lambda^{-\varepsilon},\quad \text{for any }0<t\leq \lambda t<T.
\]
If we assume that the weight function $\mu_{2}^{\circ}(t)$ satisfies
\begin{align*}
\lambda^{\varepsilon'}\lesssim\frac{W^{\circ}(k^{\circ\star}(\lambda t))}{W^{\circ}(k^{\circ\star}(t))}\lesssim\lambda^{p-\varepsilon'},\quad \text{for any }0<t\leq \lambda t<\frac{1}{k(T)},
\end{align*}
where $W^{\circ}(t)=\int_{0}^{t}\mu_{2}^{\circ}(s)\,ds$ and $k^{\circ\star}(t)=\left(k^{\circ}\mathbb{I}_{(0,T)}\right)^{-1}(\frac{1}{t})$, then by \cite{Choi J}, there exist an extension $\mu_{2}\in A_{q}\left(\mathbb{R}\right)$ of $\mu_{2}^{\circ}$ and an extension $k(t)$ of $k^{\circ}(t)$ (in fact, here $k(t)=t^{-\alpha}$) such that $W\circ k^{\star}\in I_{\circ}(0,q)$, where $W(t)=\int_{0}^{t}\mu_{2}(s)\,ds$ and $I_{\circ}(0,q)$ is the function class defined by
\begin{align*}
I_{\circ}\left(0,q\right)=\left\{f:\sup_{t>0}\frac{f(\lambda t)}{f(t)}=\circ(1) \text{ as }\lambda\rightarrow 0,\;
\sup_{t>0}\frac{f(\lambda t)}{f(t)}=\circ(\lambda^{q}) \text{ as }\lambda\rightarrow \infty\right\}.
\end{align*}
Thus we obtain $\left(W\circ k^{\star}\right)^{\frac{1}{q}}\in I_{\circ}(0,1)$. Consider the following initial value space $N_{\alpha,p,\phi}$:
\[
\left\|w_{0}\right\|_{N_{\alpha,p,\phi}}=
\begin{cases}
\left\|w_{0}\right\|_{B^{\phi,\left(W\circ k^{\star}\right)^{\frac{1}{q}}(\gamma+2,\gamma)}_{p,q}(\mu_{1})}, & \text{if } \left(W\circ k^{\star}\right)^{\frac{1}{q}}(2^{-2})>2^{-2}, \\
\left\|w_{0}\right\|_{H^{\phi,\gamma}_{p}(\mu_{1})}, & \text{if } \left(W\circ k^{\star}\right)^{\frac{1}{q}}(2^{-2})\leq2^{-2},
\end{cases}
\]
where $B^{\phi,\left(W\circ k^{\star}\right)^{\frac{1}{q}}(\gamma+2,\gamma)}_{p,q}(\mu_{1})$ is the weighted $\phi$-type Besov space, i.e.,
\begin{align*}
\left\|w\right\|_{B^{\phi,\left(W\circ k^{\star}\right)^{\frac{1}{q}}(\gamma+2,\gamma)}_{p,q}(\mu_{1})}
=\left\|\chi^{\phi}w\right\|_{L_{p}(\mu_{1})}+\left\|\left\{\left((W\circ k^{\star})^{\frac{1}{q}}(2^{-2})\right)^{j}2^{(\gamma+2)j}\left\|\bigtriangleup_{j}^{\phi}w\right\|_{L_{p}(\mu_{1})}\right\}_{j\in \mathbb{N}_{+}}\right\|_{l^{q}},
\end{align*}
where $\chi^{\phi}=1-\sum_{j\geq 0}\bigtriangleup_{j}^{\phi}$, and $\bigtriangleup_{j}^{\phi}:=\mathcal{F}^{-1}\left(\psi(2^{-j}\phi(\left|\xi\right|^{2}))\right)$, with $\psi$ being a bump function supported in $\left\{\xi:\frac{1}{2}\leq \left|\xi\right|\leq 2\right\}$.

For convenience, in the following we use $\mu_{2}$ to denote $\mu^{\circ}_{2}$, and assume that $\left(W\circ k^{\star}\right)^{\frac{1}{q}}(2^{-2})>2^{-2}$.

\begin{theorem}\label{Theorem 2}
For $\gamma\in\mathbb{R}$, let $0<\alpha<1$, $0<T<\infty$, $1<p,q<\infty$, $\mu_{1}(\cdot)\in A_{p}(\mathbb{R}^{d})$, $\mu_{2}(\cdot)\in A_{q}(\mathbb{R})$, and $h\in\mathcal{H}^{\phi,\gamma}_{p,q}\left(\mu_{1},\mu_{2},T\right)$. Then for any $w_{0}\in B^{\phi,\left(W\circ k^{\star}\right)^{\frac{1}{q}}(\gamma+2,\gamma)}_{p,q}(\mu_{1}) $, the time-space fractional equation (TSFEs) \eqref{TSFE} has a unique solution $w$ in $\mathcal{H}^{\alpha,\phi,\gamma+2}_{p,q}\left(\mu_{1},\mu_{2},T\right)$, satisfying:
\begin{align}\label{estimate initial value}
\left\|w\right\|_{\mathcal{H}^{\alpha,\phi,\gamma}_{p,q}\left(\mu_{1},\mu_{2},T\right)}
+\left\|\phi(\Delta)w\right\|_{\mathcal{H}^{\phi,\gamma}_{p,q}
\left(\mu_{1},\mu_{2},T\right)}\lesssim_{C}\left\|h\right\|_{\mathcal{H}^{\phi,\gamma}_{p,q}
\left(\mu_{1},\mu_{2},T\right)}+\left\|w_{0}\right\|_{B^{\phi,\left(W\circ k^{\star}\right)^{\frac{1}{q}}(\gamma+2,\gamma)}_{p,q}(\mu_{1}) },
\end{align}
where the constant depends on $\alpha,p,q,\mu_{1},\mu_{2},\delta,T$.
\end{theorem}

\begin{proof}
It suffices to consider the case $\gamma=0$.

Note that $\left(W\circ k^{\star}\right)^{\frac{1}{q}}\in I_{\circ}(0,1)$. Combined with \cite[Proposition A.3]{Choi J}, we obtain
\begin{align*}
\left(H^{\phi,2}_{p}(\mu_{1}),H^{\phi,0}_{p}(\mu_{1})\right)_{\left(W\circ k^{\star}\right)^{\frac{1}{q}},q}
=B^{\phi,\left(W\circ k^{\star}\right)^{\frac{1}{q}}(2,0)}_{p,q}(\mu_{1}).
\end{align*}
By applying \cite[Theorem 5.3]{Choi J}, we obtain
\begin{align*}
\left\|w(0,\cdot)\right\|_{B^{\phi,\left(W\circ k^{\star}\right)^{\frac{1}{q}}(2,0)}_{p,q}(\mu_{1})}&\lesssim_{C}\left\|w\right\|_{L_{q}((0,T),\mu_{2}\,dt;H^{\phi,2}_{p}(\mu_{1}))}
+\left\|h\right\|_{L_{q}\left((0,T),\mu_{2}\,dt;L_{p}(\mu_{1})\right)}\\
&\lesssim_{C}\left\|w\right\|_{\mathcal{H}^{\phi,2}_{p,q}\left(\mu_{1},\mu_{2},T\right)}
+\left\|\partial^{\alpha}_{t}w\right\|_{\mathcal{H}^{\phi,0}_{p,q}\left(\mu_{1},\mu_{2},T\right)}.
\end{align*}
On the other hand, for any $w_{0}\in B^{\phi,\left(W\circ k^{\star}\right)^{\frac{1}{q}}(2,0)}_{p,q}(\mu_{1})$, by applying \cite[Theorem 1.6]{Choi J}, we obtain that there exist $w\in L_{q}(\mathbb{R}_{+},\mu_{2}\,dt;H^{\phi,2}_{p}(\mu_{1}))$ and $h\in L_{q}(\mathbb{R}_{+},\mu_{2}\,dt;L_{p}(\mu_{1}))$ such that $\partial^{\alpha}_{t}w=h$ and
\begin{align*}
\left\|w\right\|_{L_{q}(\mathbb{R}_{+},\mu_{2}\,dt;H^{\phi,2}_{p}(\mu_{1}))}
+\left\|h\right\|_{L_{q}(\mathbb{R}_{+},\mu_{2}\,dt;L_{p}(\mu_{1}))}\lesssim_{C}\left\|w_{0}\right\|_{B^{\phi,\left(W\circ k^{\star}\right)^{\frac{1}{q}}(2,0)}_{p,q}(\mu_{1})}.
\end{align*}
Therefore, we have
\begin{align*}
\left\|w\right\|_{\mathcal{H}^{\phi,2}_{p,q}\left(\mu_{1},\mu_{2},T\right)}
+\left\|\partial^{\alpha}_{t}w\right\|_{\mathcal{H}^{\phi,0}_{p,q}\left(\mu_{1},\mu_{2},T\right)}&\lesssim \left\|w\right\|_{L_{q}(\mathbb{R}_{+},\mu_{2}\,dt;H^{\phi,2}_{p}(\mu_{1}))}
+\left\|h\right\|_{L_{q}(\mathbb{R}_{+},\mu_{2}\,dt;L_{p}(\mu_{1}))}\\
&\quad+\left\|w_{0}\right\|_{B^{\phi,\left(W\circ k^{\star}\right)^{\frac{1}{q}}(2,0)}_{p,q}(\mu_{1})}.
\end{align*}
Hence, we obtain that there exists $w_{1}\in \mathcal{H}^{\alpha,\phi,2}_{p,q}\left(\mu_{1},\mu_{2},T\right)$ satisfying
\begin{align*}
\partial^{\alpha}_{t}w_{1}=f,\quad w_{1}(0,\cdot)=w_{0},
\end{align*}
and
\[
\left\|\partial^{\alpha}_{t}w_{1}\right\|_{\mathcal{H}^{\phi,0}_{p,q}\left(\mu_{1},\mu_{2},T\right)}
+\left\|w_{1}\right\|_{\mathcal{H}^{\phi,2}_{p,q}\left(\mu_{1},\mu_{2},T\right)}\lesssim\left\|w_{0}\right\|_{B^{\phi,\left(W\circ k^{\star}\right)^{\frac{1}{q}}(2,0)}_{p,q}(\mu_{1})}.
\]
By using Theorem \ref{Theorem 1}, there exists $w_{2}\in \mathcal{H}^{\alpha,\phi,2}_{p,q,0}\left(\mu_{1},\mu_{2},T\right)$ satisfying
\begin{align*}
\partial^{\alpha}_{t}w_{2}=\phi(\Delta)w_{2}+h-f+\phi(\Delta)w_{1},\quad w_{2}(0)=0.
\end{align*}
Therefore, $w=w_{1}+w_{2}\in\mathcal{H}^{\alpha,\phi,2}_{p,q}\left(\mu_{1},\mu_{2},T\right)$, $w(0)=w_{0}$, and it satisfies \eqref{estimate initial value}. Moreover, the uniqueness follows directly from Theorem \ref{Theorem 1}.
\end{proof}

\begin{remark}
For $\mu_{2}\equiv 1$, it is easy to see that $k^{\star}(t)=k^{-1}(t^{-1})=\left(\Gamma(1-\alpha)t\right)^{\frac{1}{\alpha}}$ and $\left(W\circ k^{\star}\right)^{\frac{1}{q}}=\left(\Gamma(1-\alpha)t\right)^{\frac{1}{\alpha q}}$. Hence, when $\alpha q>1$,
\[
\left\|w_{0}\right\|_{N_{\alpha,p,\phi}}\sim \left\|w_{0}\right\|_{B^{\phi,\left(W\circ k^{\star}\right)^{\frac{1}{q}}(\gamma+2,\gamma)}_{p,q}(\mu_{1})}\sim\left\|w_{0}\right\|_{B^{\phi,\gamma+2-\frac{2}{\alpha q}}_{p,q}(\mu_{1})}.
\]
Thus, Theorem~\rm\ref{Theorem 2} extends the result of Kim \cite[Theorem 2.8]{Kim1}.
\end{remark}

\noindent{\bf Declaration of competing interest}\\
The authors declare that they have no competing interests.\\
\noindent{\bf Data availability}\\
No data was used for the research described in the article.\\
\noindent{\bf Acknowledgements}\\
This work was supported by National Natural Science Foundation of China (12471172), Fundo para o Desenvolvimento das Ci\^{e}ncias e da Tecnologia of Macau (No. 0092/2022/A) and Hunan Province Doctoral Research Project CX20230633.

\end{document}